\documentclass[12pt]{amsart}
\usepackage{amsfonts}
\usepackage{amssymb}
\usepackage{amsmath}
\usepackage{graphicx}
\usepackage{axodraw}  %Pour les arbres, les graphes, etc.
\usepackage{amsthm}
\usepackage{amscd}
\usepackage[all]{xy}               % pour les diagrammes
\usepackage{epsfig,exscale}  % pour les dessins
\usepackage{color}
\usepackage{txfonts} 
 \font \eightrm=cmr8
 
\newcommand{\nc}{\newcommand}
\newcommand{\butcher}{{\scriptstyle\circleright}}
\newcommand{\lbutcher}{{\raise 3.9pt\hbox{$\circ$}}\hskip -1.9pt{\scriptstyle \searrow}}

 \setlength{\textheight}{9in}
 \setlength{\topmargin}{-30pt}
 \setlength{\textwidth}{6.3in}
 \setlength{\oddsidemargin}{-7pt}
 \setlength{\evensidemargin}{-7pt}

\hfuzz5pt \vfuzz5pt

\newtheorem{thm}{Theorem}
\newtheorem{exam}{Example}
\newtheorem{exams}[thm]{Examples}
\newtheorem{cor}[thm]{Corollary}

\newtheorem{lem}[thm]{Lemma}
\newtheorem{prop}[thm]{Proposition}
\newtheorem{defn}{Definition}
\newtheorem{rmk}[thm]{Remark}

%%%%%%%%%%%%%%%%%%%%%%%%%%%%%%%%%%%%%%%%%%%%%%%%%%%%%%%%%%%trees%%%%%%%%%%%%%
%%%%%%%%%%%%%%%%%%%%%%%%%%%%%%%%%%%%%%%%%%%%%%%%%%%%%%%%%%%%%%%%%%%%%%%%%%%%

\nc{\ignore}[1]{{}}
\nc{\mrm}[1]{{\rm #1}}
\nc{\dirlim}{\displaystyle{\lim_{\longrightarrow}}\,}
\nc{\invlim}{\displaystyle{\lim_{\longleftarrow}}\,}
\nc{\vep}{\varepsilon} \nc{\ep}{\epsilon}
\nc{\sigmat}{\widetilde\sigma}
\nc{\ostar}{\overline{*}}

\nc{\mchar}{\mrm{Char}}
\nc{\Hom}{\mrm{Hom}}
\nc{\id}{\mrm{id}}

\nc{\remark}{\noindent{\bf{Remark:}}}
\nc{\remarks}{\noindent{\bf{Remarks:}}}

 %nc{\delete}[1]{}
 \nc{\grad}[1]{^{({#1})}}
 \nc{\fil}[1]{_{#1}}

\nc{\BA}{{\Bbb A}} \nc{\CC}{{\Bbb C}} \nc{\DD}{{\Bbb D}}
\nc{\EE}{{\Bbb E}} \nc{\FF}{{\Bbb F}} \nc{\GG}{{\Bbb G}}
\nc{\HH}{{\Bbb H}} \nc{\LL}{{\Bbb L}} \nc{\NN}{{\Bbb N}}
\nc{\PP}{{\Bbb P}} \nc{\QQ}{{\Bbb Q}} \nc{\RR}{{\Bbb R}}
\nc{\TT}{{\Bbb T}} \nc{\VV}{{\Bbb V}} \nc{\ZZ}{{\Bbb Z}}
\nc{\Cal}[1]{{\mathcal {#1}}}
\nc{\mop}[1]{\mathop{\hbox {\rm #1} }}
\nc{\smop}[1]{\mathop{\hbox {\eightrm #1} }}
\nc{\mopl}[1]{\mathop{\hbox {\rm #1} }\limits}
\nc{\frakg}{{\frak g}}
\nc{\g}[1]{{\frak {#1}}}
\nc{\wt}{\widetilde}
\nc{\wh}{\widehat}
\nc{\un}{\hbox{\bf 1}}
\nc{\redtext}[1]{\textcolor{red}{#1}}
\nc{\bluetext}[1]{\textcolor{blue}{#1}}

\nc\fleche[1]{\mathop{\hbox to #1 mm{\rightarrowfill}}\limits}
\def\semi{\mathrel{\times}\kern -.85pt\joinrel\mathrel{\raise
    1.4pt\hbox{${\scriptscriptstyle |}$}}}

\def\fleche#1{\mathop{\hbox to #1 mm{\rightarrowfill}}\limits}
\def\gfleche#1{\mathop{\hbox to #1 mm{\leftarrowfill}}\limits}
\def\inj#1{\mathop{\hbox to #1 mm{$\lhook\joinrel$\rightarrowfill}}\limits}
\def\ginj#1{\mathop{\hbox to #1 mm{\leftarrowfill$\joinrel\rhook$}}\limits}
\def\surj#1{\mathop{\hbox to #1 mm{\rightarrowfill\hskip 2pt\llap{$\rightarrow$}}}\limits}
\def\gsurj#1{\mathop{\hbox to #1 mm{\rlap{$\leftarrow$}\hskip 2pt
      \leftarrowfill}}\limits}
\def \restr#1{\mathstrut_{\textstyle |}\raise-6pt\hbox{$\scriptstyle #1$}}
\def \srestr#1{\mathstrut_{\scriptstyle |}\hbox to
-1.5pt{}\raise-4pt\hbox{$\scriptscriptstyle #1$}}

%%%%%%%%%%%%%%%%%%%%%%%%%%%%%%%%%%%%%%%%%%%%%%%%%%%%%%%%%%%diagram
\def\diagrama #1{\vskip 4mm \centerline {#1} \vskip 4mm}
%%%%%%%%%%%%%%%%%%%%%%%%%%%%%%%%%% binary trees %%%%%%%%%%%%%%%%%%%%%%%%%%%%%%

\newcommand{\treel}{\hskip 0.5pc\scalebox{-0.3}{{\parbox{0.5pc}{
  % [inline block 0: 165 envs, 57330 chars -> data_tex | \begin{picture}(60,45) (75,10)     \SetWidth{1.5}...]

}}\,}

%%%%%%%%%%%%%%%%%%%%%%%Table%%%%%%%%%%%%%%%%%%
%%%%%%%%%%%%%%%%%%%%%%%%%%%%%%%%%%%%%%%%%%%%%%

%%%%%%%%%%%%%%%%%%%%%%%%%%%%%%%%%%%%%%%%%%%%%
%%%%%%%%%%%%%%%%%%%%%%%%%%%%%%%%%%%%%%%%%%%%%
\begin{document}
%%%%%%%%%%%%%%%%%%%%%%%%%%%%%%%%%%%%%%%%%%%%%
%%%%%%%%%%%%%%%%%%%%%%%%%%%%%%%%%%%%%%%%%%%%%
%%%%%%%%%%%%%%%%%%%%%%%%%%%%%%%%%%%%%%%%%%%%%
\title[Monomial bases]
      {Monomial bases and pre-Lie structure for free Lie algebras}
      
\author{Mahdi J. Hasan Al-Kaabi}
\author{Dominique Manchon}
\author{Fr\'ed\'eric Patras}

\address {Mathematics Department, College of Science, Al-Mustansiriya University, Palestine Street, P.O.Box 14022, Baghdad, IRAQ. Mahdi.Alkaabi@math.univ-bpclermont.fr}  
\address{LMBP, CNRS-UMR6620, Universit\'e Clermont-Auvergne, 3 place Vasar\'ely, CS 60026, F63178 Aubi\`ere, CEDEX, France. Dominique.Manchon@math.univ-bpclermont.fr}
\address{Laboratoire J.A.Dieudonn\'e, UMR CNRS-UNS N°7351
 Universit\'e de Nice Sophia-Antipolis, Parc Valrose, 06108 NICE Cedex 2. Frederic.PATRAS@unice.fr}

%%%%%%%%%%%%%%%%%%%%%%%%%%%%%%%%%%%%%%%%%%%%%%%%%%%%%%%%%%%%%%%%%%%
\date{}
%%%%%%%%%%%%%%%%%%%%%%%%%%%%%%%%%%%%%%%%%%%%%%%%%%%%%%%%%%%%%%%%%%%
\begin{abstract}
In this paper, we construct a pre-Lie structure on the free Lie algebra $\Cal{L}(E)$ generated by a set $E$, giving an explicit presentation of $\Cal{L}(E)$ as the quotient of the free pre-Lie algebra $\Cal{T}^E$, generated by the (non-planar) $E$-decorated rooted trees, by some ideal $I$. The main result in this paper is a description of Gr\"obner bases in terms of trees.    
\end{abstract}

\maketitle
\tableofcontents
\textbf{Math. Subject Classification:} 05C05, 17D25, 17A50, 17B01.

\textbf{Keywords:} pre-Lie algebras, monomial bases, free Lie algebras, rooted trees.

%%%%%%%%%%%%%%%%%%%%%%%%%%%%%%%%%%%%%%%%%%%%%%%%%%%%%%%%%%%%%%%%%%%
%%%%%%%%%%%%%%%%%%%%%%%%%%%%%%%%%%%%%%%%%%%%%%%%%%%%%%%%%%%%%%%%%%%
\section{Introduction}
In the spirit of Felix Klein's (1849-1925) "Erlangen Program", any Lie group $G$\label{G} is a group of symmetries of some class of differentiable manifolds. The corresponding infinitesimal transformations are given by the Lie algebra of $G$, which is the set of left-invariant vector fields on $G$. The problem of classification of groups of transformations has been considered by S. Lie (1842-1899) not only for subgroups of $GL_n$, but also for infinite dimensional groups \cite{VK90}. \\

 The problem of classification of simple finite-dimensional Lie algebras over the field of complex numbers was solved at the end of the 19th century by W. Killing (1847-1923) and E. Cartan (1869-1951). The central figure of the origins of the theory of the structure of Lie algebras is W. Killing, whose paper in four parts laid the conceptual foundations of the theory. In 1884, Killing introduced the concept of Lie algebra independently of Lie and formulated the problem of determining all possible structures for real, finite dimensional Lie algebras. The joint work of Killing and Cartan establishes the foundations of the theory. Killing's work contained many gaps which Cartan succeeded to fill \cite{TH82}, \cite{VK90}. W. Killing, H. Cartan, S. Lie, and F. Engel are the main authors of the early development of the theory and some of its various applications.\\

The concept of pre-Lie algebras appeared in many works under various names. E. B. Vinberg and M. Gerstenhaber in 1963 independently presented  the concept under two different names; "right symmetric algebras" and "pre-Lie algebras" respectively \cite{E.V63, M.G63}. Other denominations, e.g. "Vinberg algebras", appeared since then. "Chronological algebras" is the term used by A. Agrachev and R. V. Gamkrelidze in their work on \textsl{nonstationary vector fields} \cite{AR81}. The term "pre-Lie algebras" is now the standard terminology. The Lie algebra of a real connected Lie group $G$ admits a compatible pre-Lie structure if and only if $G$ admits a left-invariant affine structure \cite[Proposition 2.31]{D.B06}, see also the work of J. L. Koszul \cite{J.K61} for more details about the pre-Lie structure, in a geometrical point of view.\\

In Sections \ref{section-one}, \ref{section-two} of this paper, we recall some basics: trees, Lie and pre-Lie algebras, Gr\"obner bases. We construct, in Section \ref{CIIIsecI}, a structure of pre-Lie algebra on the free Lie algebra $\Cal{L}(E)$ generated by a set $E$, and we give the explicit presentation of $\Cal{L}(E)$ as the quotient of the free pre-Lie algebra $\Cal{T}^E$ by some ideal.\\

Recall that $\Cal{T}^E_{\!\!pl}$ is the linear span of the set $T^E_{\!pl}$ of all planar $E$-decorated rooted trees, which forms together with the left Butcher product $\lbutcher$, and the left grafting $\searrow$\label{sea2} respectively two magmatic algebras. In Section \ref{mwo}, we give a tree version of a monomial well-order on $T^E_{\!pl}$. We adapt the work of T. Mora \cite{T.M} on Gr\"obner bases to a non-associative, magmatic context, using the descriptions of the free magmatic algebras $\big(\Cal{T}^E_{\!\!pl}, \lbutcher \big)$ and $\big(\Cal{T}^E_{\!\!pl}, \searrow\big)$ respectively, following \cite{D.H.}. We split the basis of $E$- decorated planar rooted trees into two parts $O(J')$ and $T(J')$, where $J'$ is the ideal of $\Cal{T}^E_{\!\!pl}$ generated by the pre-Lie identity and by weighted anti-symmetry relations:
$$|\sigma| \sigma \lbutcher \tau + |\tau| \tau \lbutcher \sigma.$$
\noindent Here $T(J')$ is the set of maximal terms of elements of $J'$, and its complement $O(J')$ then defines a basis of $\Cal{L}(E)$. We get one of the important results (Theorem \ref{main}), on the description of the set $O(J')$ in terms of trees.\\ 

In Section \ref{CIIIsecIII}, we give a non-planar tree version of the monomial well-order above. We describe monomial bases for the pre-Lie (respectively free Lie) algebra $\Cal{L}(E)$, using the procedures of Gr\"obner bases and our work described in \cite{AM2014}, in the monomial basis for the free pre-Lie algebra $\Cal{T}^E$.  

\section{Trees}\label{section-one}
In graph theory, a tree is a undirected connected finite graph, without cycles \cite{EM}. A rooted tree is defined as a tree with one designated vertex called the root. The other remaining vertices are partitioned into $k\geq 0$ disjoint subsets such that each of them in turn represents a rooted tree, and a subtree of the whole tree. This can be taken as a recursive definition for rooted trees, widely used in computer algorithms \cite{D.K68}. Rooted trees stand among the most important structures appearing in many branches of pure and applied mathematics. \\

In general, a tree structure can be described as a "branching" relationship between vertices, much like that found in the trees of nature. Many types of trees defined by all sorts of constraints on properties of vertices appear to be of interest in combinatorics and in related areas such as formal logic and computer science.\\

A planar binary tree is a finite oriented tree embedded in the plane, such that each internal vertex has exactly two incoming edges and one outgoing edge. One of the internal vertices, called the root, is a distinguished vertex with two incoming edges and one edge, like a tail at the bottom, not ending at a vertex. The incoming edges in this type of trees are internal (connecting two internal vertices), or external (with one free end). The external incoming edges are called the leaves. We give here some examples of planar binary trees:
$$\treel\,\,\,\,\treesmall\,\,\,\,\,\,\treeA\,\,\,\,\treeB\,\,\,\,\treeC\,\,\,\,\treeE\,\,\,\,\treeD\,\,\,\,\treeF\,\,\,\,\treeG\,\,\,\,\ldots,$$
where the single edge "$\treelsmall$" is the unique planar binary tree without internal vertices. The degree of any planar binary tree is the number of its leaves. Denote by $T^{bin}_{pl}$ (respectively $\Cal{T}^{bin}_{\!pl}$) the set (respectively the linear span) of planar binary trees.\\

Define the grafting operation "$\vee$" on the space $\Cal{T}^{bin}_{\!pl}$\label{cbin1} to be the operation that maps any planar binary trees $t_1, t_2$ into a new planar binary tree $t_1 \vee t_2$, which takes the $Y$-shaped tree $\treesmall$ replacing the left (respectively the right) branch by $t_1$ (respectively $t_2$), see the following examples:
$$ \treel \vee \treel = \treesmall\,,\,\,\,\treel \vee \treesmall = \treeA\,,\,\,\,\treesmall \vee \treel = \treeB\,,\,\,\,\treesmall \vee \treesmall = \treeD\,,\,\,\,\treel \vee \treeA = \treeC.$$

The number of binary trees of degree $n$ is given by the Catalan number $c_n = \frac{(2n)!}{(n+1)! n!}$, where the first ones are $1, 1, 2, 5, 14, 42, 132, \ldots$. This sequence of numbers is the sequence A000108 in \cite{S}. \\

Let $E$\label{E3} be a (non-empty) set. The free magma $M(E)$ generated by $E$ can be described as the set of planar binary trees with leaves decorated by the elements of $E$, together with the "$\vee$" product described above \cite{D.K68, PT09}. Moreover, the linear span $\Cal{T}^{bin, E}_{\!\!\!pl}$, generated by the trees of the magma $M(E)= T^{bin, E}_{\!\!pl}$ defined above, equipped with the grafting "$\vee$" is a description of the free magmatic algebra.\\

For any positive integer $n$, a rooted tree of degree $n$, or simply $n$-rooted tree, is a finite oriented tree together with $n$ vertices. One of them, called the root, is a distinguished vertex without any outgoing edge. Any vertex can have arbitrarily many incoming edges, and any vertex distinct from the root has exactly one outgoing edge. Vertices with no incoming edges are called leaves.\\

A rooted tree is said to be planar, if it is endowed with an embedding in the plane. Otherwise, its called a (non-planar) rooted tree. Let $E$ be a (non-empty) set. An $E$-decorated rooted tree is a pair $(t, d)$ of a rooted tree $t$ together with a map $d: V(t) \rightarrow E$, which decorates each vertex $v$ of $t$ by an element $a$ of $E$, i.e. $d(v) = a$, where $V(t)$ is the set of all vertices of $t$. Here are the planar (undecorated) rooted trees up to five vertices:
$$\racine\hskip 6mm\arbrea\hskip 6mm \arbreba\,\arbrebb\hskip 6mm\arbreca\,\arbrecb\,\arbrecc\,\arbreccc\,\arbrecd\hskip 6mm \arbreda\,\arbredb\,\arbredc\,\arbredcc\,\arbredd\,\arbrede\,\arbredee\,\arbredf\,\arbredz\!\arbredzz\;\arbredg\,\arbredgg  \arbredggg\,\arbredh\,\,\,\,\cdots $$

From now on, we will consider that all our trees are decorated, except for some cases in which we will state the property explicitly. Denote by $T^{E}_{\!pl}$\label{plE1} (respectively $T^{E}$) the set of all planar (respectively non-planar) decorated rooted trees, and $\Cal{T}^{E}_{\!pl}$ (respectively $\Cal{T}^{E}$) the linear space spanned by the elements of $T^{E}_{\!pl}$ (respectively $T^{E}$). Any rooted tree $\sigma$ with branches $\sigma_1, \ldots, \sigma_k$ and a root $\racineLabaa$, can be written as:
\begin{equation}\label{B+}
\sigma = B_{+,\,a}(\sigma_1 \cdots \sigma_k),
\end{equation}
where $B_{+,\,a}$ is the operation which grafts a monomial $\sigma_1\, \cdots\,\sigma_k$ of rooted trees on a common root decorated by an element $a$ in $E$, which gives a new rooted tree. The planar rooted tree $\sigma$ in formula \eqref{B+} depends on the order of the branches, whereas this order is not important for the corresponding (non-planar) tree.\\ 

Define the (left) Butcher product $\,\lbutcher$ of any planar rooted trees $\sigma$ and $\tau$ by:
\begin{equation}\label{butcher product}
 \sigma \lbutcher \tau := B_{+,\,a}(\sigma \tau_1 \cdots \tau_{k}),
\end{equation}
where $\tau_1, \ldots, \tau_{k}\!\in\!T^{E}_{\!\!pl}\!,$ such that $\tau = B_{+,\,a}(\tau_1 \cdots \tau_{k})$\label{E4}. It maps the pair of trees $(\sigma, \tau)$ into a new planar rooted tree induced by grafting the root of $\sigma$, on the left via a new edge, on the root of $\tau$.\\

The left grafting $\searrow$ is a bilinear operation defined on the vector space $\Cal{T}^{E}_{\!\!\!pl}$, such that for any planar rooted trees $\sigma$ and $\tau$:
\begin{equation}\label{searrow}
\sigma \searrow \tau = \sum_{v\,vertex\,of\,\tau}{\sigma \searrow_{v}\tau},
\end{equation}
where $\sigma \searrow_{v} \tau$ is the tree obtained by grafting the tree $\sigma$, on the left, on the vertex $v$ of the tree $\tau$, such that $\sigma$ becomes the leftmost branch, starting from $v$, of this new tree. For example:
$$ \arbrea \lbutcher \arbreba = \arbredf,\,\,\,\,\,\,\,\,\,\,\arbrea \searrow \arbreba = \arbreda+\arbredc+\arbredf.$$
The number of trees in $T^{E}_{\!pl}$\label{plE2} is the same than in $T^{bin, E}_{\!pl}$: a one-to-one correspondence between them is given by D. Knuth's rotation correspondence \cite{D.K68} (see subsection 2.1 in \cite{AM2014}). On the other hand, for any homogeneous component $T^n$ of (non-planar) undecorated rooted trees of degree "$n$", for $n \geq 1$, the number  of trees in $T^n$ is given by the sequence: $1, 1, 2, 4, 9, 20, 48, \ldots,$ which is sequence A000081 in \cite{S}.\\

The two graftings, defined by \eqref{butcher product} and \eqref{searrow} above, provide the space $\Cal{T}^{E}_{\!\!\!pl}$ with structures of free magmatic algebras. K. Ebrahimi-Fard and D. Manchon showed, in their joint work (unpublished)\footnote{ More details about this work in \cite[subsection $2.1$]{AM2014}.}, that these two structures on $\Cal{T}^{E}_{\!\!\!pl}$ are isomorphic.\\

In the non-planar case, the usual product $\butcher$ given by the same formula \eqref{butcher product}, is known as the Butcher product. It is non-associative permutative (NAP), i.e. it satisfies the following identity:
$$ s \butcher (s' \butcher t) = s' \butcher ( s \butcher t),$$ 
for any (non-planar) trees $s,s',t$. The grafting product $\to$ defined as a bilinear map on the vector space $\Cal{T}^E$ as follows:
\begin{equation}\label{pre-lie product}
s \to t =\sum_{v\,\in V(t)}{ s \to_v t},
\end{equation}
for any $s, t\!\in\!\Cal{T}^E\!$, where $s \to_v t$ is the (non-planar) decorated rooted tree obtained by grafting the tree $s$ on the vertex $v$ of the tree $t$. This product is pre-Lie (see Paragraph \ref{pre-Lie algebras}). For the case with one generator, we have:
$$\arbrea \butcher \arbrea= \arbrecc,\hskip 8mm \arbrea\to\arbrea=\arbreca+\arbrecc.$$

\section{Lie and pre-Lie algebras }\label{section-two}
A Lie algebra over a field $K$ is a $K$-vector space $\Cal{L}$\,, with a $K$-bilinear mapping  $[\cdot\,, \cdot]:\Cal{L} \times \Cal{L} \rightarrow \Cal{L}$ (the Lie bracket), satisfying the following properties:
\begin{equation}\label{reflexif property}
[x, y]+[y,x] = 0 \hbox{ (anti-symmetry) }
\end{equation}
\begin{equation}\label{Jacobi identity}
[[x, y], z] + [[y, z], x] + [[z, x], y] = 0\hbox{ (Jacobi identity) }
\end{equation}
for all $x, y, z \in \Cal{L}$.\\

A left pre-Lie algebra is a vector space $\Cal{A}$ over a field $K$\label{not5-K}, together with a bilinear operation ''$\vartriangleright$'' that satisfies:
\begin{equation}\label{pre-lie identity}
(x \vartriangleright y)\vartriangleright z - x \vartriangleright  (y \vartriangleright z) = (y \vartriangleright x)\vartriangleright z - y \vartriangleright  (x \vartriangleright z), \forall x,y,z \in \Cal{A}.
\end{equation}   
The identity \eqref{pre-lie identity} is called the left pre-Lie identity, and it can be written as:
\begin{equation}
L_{[x,y]} = [L_x, L_y], \forall x,y \in \Cal{A},
\end{equation}      
where for every element $x$ in $\Cal{A}$, the linear transformation $L_x$ of the vector space $\Cal{A}$ is defined by $L_x(y)={x}\vartriangleright{y}, \forall y\!\in\!\Cal{A}$, and $[x,y]={x}\vartriangleright{y}-{y}\vartriangleright{x}$ is the commutator of the elements $x$ and $y$ in $\Cal{A}$. The usual commutator $[L_x,L_y]=L_xL_y-L_yL_x$ of the linear transformations of $\Cal{A}$ defines a structure of Lie algebra over $K$ on the vector space $L(\Cal{A})$ of all linear transformations of $\Cal{A}$. For any pre-Lie algebra $\Cal{A}$, the bracket $[\cdot,\cdot]$ satisfies the Jacobi identity, hence induces a structure of Lie algebra on $\Cal{A}$.\\

\subsection{Free Lie algebras} The Lie algebra of Lie polynomials, introduced by E. Witt (1911-1991), is actually the free Lie algebra. The first appearance of Lie polynomials was at the turn of the century in the work of Campbell, Baker and Hausdorff on the exponential mapping in a Lie group, when the well-known result "Campbell-Baker-Hausdorff formula" appeared. For more details about a historical review of  free Lie algebras, we refer the reader to the reference \cite{Ch.R93} and the references therein.  \\

A free Lie algebra is a pair $\big(\Cal{L}, i\big)$, of a Lie algebra $\Cal{L}$ together with a map $i: E \rightarrow \Cal{L}$ from a (non-empty) set $E$ into $\Cal{L}$, satisfying the following universal property: for any Lie algebra $\Cal{L}'$ and any mapping $f:E \rightarrow \Cal{L}'$, there is a unique Lie algebra homomorphism $\tilde{f}: \Cal{L} \rightarrow \Cal{L}'$ which makes the following diagram commute:
\begin{figure}[h]
\diagrama{
\xymatrix{
E \ar[r]^{i} \ar[dr]_{f} & \Cal{L} \ar[d]^{\tilde{f}}\\
 & \Cal{L}'}}
\caption{The universal property of the free Lie algebra.}
\label{up}
\end{figure}

\noindent It is unique up to an isomorphism. If $\Cal{L}$ is a $K$- Lie algebra and $E \subseteq \Cal{L}$, then we say that $E$ freely generates $\Cal{L}$ if $\big(\Cal{L}, i\big)$ is free, where $i$ is the canonical injection from $E$ to $\Cal{L}$.\\  

Recall \cite{J.D74, Ch.R93} that the enveloping algebra $\Cal{U}(\Cal{L})$ of the free Lie algebra $\Cal{L}(E)$ is the free unital associative algebra on $E$. The Lie algebra homomorphism $\varphi_{0}: \Cal{L}(E) \rightarrow \Cal{U}(\Cal{L})$ is injective, and $\varphi_{0}(\Cal{L}(E))$ is the Lie subalgebra of $\Cal{U}(\Cal{L})$ generated by $j(E)$, where $j := \varphi_{0} \circ i$.  

\subsection{Gr\"obner bases} The theory of Gr\"obner bases was introduced in 1965 by Bruno Buchberger for ideals in polynomial rings and an algorithm called Buchberger algorithm for their computation. This theory contributed, since the end of the Seventies, in the development of computational techniques for the symbolic solution of polynomial systems of equations and in the development of effective methods in Algebraic Geometry and Commutative Algebra. Moreover, this theory has been generalized to free non-commutative algebras and to various non-commutative algebras of interest in Differential Algebra, e.g. Weyl algebras, enveloping algebras of Lie algebras \cite{T.M}, and so on.  \\

The attempt to imitate Gr\"obner basis theory for non-commutative algebras works fine up to the point where the termination of the analogue to the Buchberger algorithm can be proved. Gr\"obner bases and Buchberger algorithm have been extended, for the first time, to ideals in free non-commutative algebras by G. Bergman in 1978. Later, F. Mora in 1986 made precise in which sense Gr\"obner bases can be computed in free non-commutative algebras \cite{T.M}. The construction of finite Gr\"obner bases for arbitrary finitely generated ideals in non-commutative rings is possible in the class of \textit{solvable algebras} \footnote{For more details about the solvable algebras see \cite[Appendix: Non-Commutative Gr\"obner Bases, pages 526-528]{TB93}.}. This class comprises many algebras arising in mathematical physics such as: Weyl algebras, enveloping algebras of finite-dimensional Lie algebras, and iterated skew polynomial rings. Gr\"obner bases were studied, in these algebras, for special cases by Apel and Lassner in 1985, and in full generality by Kandri-Rody and Weispfenning in 1990 \cite{TB93}.  \\

Recently, V. Drensky and R. Holtkamp used Gr\"obner theory  in their work \cite{D.H.} for a non-associative, non-commutative case (the magmatic case). Whereas, L. A. Bokut, Yuqun Chen and Yu Li, in their work \cite{BYL}, give Gr\"obner-Shirshov basis for a right-symmetric algebra (pre-Lie algebra). The theory of Gr\"obner-Shirshov bases was invented by A. I. Shirshov for Lie algebras in 1962 \cite{AS62}.\\

We try in this paper, precisely in Section \ref{mwo}, to describe a monomial basis in tree version for the free Lie (respectively pre-Lie) algebras using the procedures of Gr\"obner bases, comparing with the one (i.e. the monomial basis) obtained for the free pre-Lie algebra in our preceding work \cite{AM2014}.  We need here to review some basics for the theory of Gr\"obner bases.     

\begin{defn}\label{monomial order}
 Let $\big( M(E), \cdot\big)$\label{E6} be the free magma generated by $E$. A total order $<$ on $M(E)$ is said to be monomial if it satisfies the following property:
\begin{equation}\label{property}
\hbox{ for any } x, y, z \in M(E), \hbox{ if } x < y, \hbox{ then } x\cdot z < y \cdot z \hbox{ and } z \cdot x < z \cdot y,
\end{equation} 
i.e. it is compatible with the product in $M(E)$.
\end{defn}

This property, in \eqref{property}, implies that for any $x, y \in M(E)$ then $x < x \cdot y$. An order is called  a well-ordering if every strictly decreasing sequence of monomials is finite, or equivalently if every non-empty set of monomials has a minimal element.\\

Let $\Cal{M}_E$ be the $K$\label{not4-K}-linear span of the free magma $M(E)$, and $I$ be any magmatic (two-sided) ideal of $\Cal{M}_E$.  For any element $f = \!\!\!\!\!\!\sum\limits_{x \in M(E)}^{}{\!\!\!\lambda_x x}$ (finite sum) in $I$, define $T(f)$\label{Tf} to be the maximal term of $f$ with respect to a given monomial order defined on $M(E)$, namely $T(f) = \lambda_{x_0} x_0, \hbox{ with } x_0 =max \{ x \in M(E), \lambda_x \neq 0\}$.  Denote $T(I) := \{ T(f) : f \in I \}$ the set of all maximal terms of elements of $I$. Note that the set $T(I)$ forms a (two-sided) ideal of the magma $M(E)$ \cite{T.M}. Define the set $O(I):= M(E) \backslash T(I)$. We have that the magma $M(E)= T(I)\cup O(I)$ is the disjoint union of $T(I), O(I)$ respectively. As a consequence, we get that:

\begin{equation}\label{realized2}
\Cal{M}_E =S\!pan_K(T(I))\oplus S\!pan_K(O(I)).
\end{equation}

Define a linear mapping $\varphi$ from $I$ into $S\!pan_K(T(I))$, which makes the following diagram commute:
\begin{figure}[h]
\diagrama{
\xymatrix{
I \ar@{^{(}->}[r]^-{i} \ar@{->}[drr]^-{\varphi}&\Cal{M}_E \ar@{->}[r]^-{\tilde{=}} & S\!pan_K(T(I))\oplus S\!pan_K(O(I)) \ar@{->>}[d]^{P}\\
& & S\!pan_K(T(I))}}
\caption{Definition of $\varphi.$}
\label{}
\end{figure}\\

\noindent where $P$ is the projection map. Then the mapping $\varphi$ is defined by:
\begin{equation}\label{def-phi}
\varphi(f) = \sum\limits_{x \in T(I)}{\alpha_x x}, \hbox{ for } f \in I,
\end{equation} 
where $f= \sum\limits_{x \in T(I)}{\alpha_x x} + \hbox{ corrective term in } S\!pan_K(O(I))$, and $\alpha_x \in K \hbox{ for all } x \in T(I)$. The map $\varphi$ is obviously injective. Indeed, for any $f \in I$ and $\varphi(f) = 0$, then $f \in S\!pan_K(O(I))$, and from Theorem \ref{span}, $S\!pan_K(O(I)) \bigcap I = \{ 0 \}$. Also, according to Theorem \ref{span} and by the definition of $\varphi$ in \eqref{def-phi}, we note that $\varphi$ is surjective. Hence, $\varphi$ is an isomorphism of vector spaces. Thus, we can deduce from the formula \eqref{realized2}:

\begin{equation}
\Cal{M}_E =I \oplus S\!pan_K(O(I)).
\end{equation}
In Section \ref{mwo}, we will give a tree version of the monomial well-ordering with a review of Mora's work \cite{T.M}, in the case of rooted trees. 
%%%
\subsection{Free pre-Lie algebras}\label{pre-Lie algebras}
%%%
As a particular example of pre-Lie algebras, take the linear space of the set of all (non-planar) $E$-decorated rooted trees $\Cal{T}^E$\label{cnp3} which has a structure of pre-Lie algebra together with the product "$\to$" defined in \eqref{pre-lie product}.\\

Free pre-Lie algebras have been handled in terms of rooted trees by F. Chapoton and M. Livernet \cite{CL01}, who also described the pre-Lie operad explicitly, and by A. Dzhumadil'daev and C. L\"ofwall independently \cite{AC02}. For an elementary version of the approach by Chapoton and Livernet without introducing operads, see e.g. \cite[Paragraph 6.2]{D.M}:

\begin{thm}\label{d-generators}
Let $k$ be a positive integer. The free pre-Lie algebra with $k$ generators is the vector space $\Cal T$ of (non-planar) rooted trees with $k$ colors, endowed with grafting.
\end{thm}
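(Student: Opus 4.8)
The plan is to prove that $(\Cal{T}^E,\to)$ enjoys the universal property defining the free pre-Lie algebra on $E$, where $E$ (the set of $k$ colors) is identified with the set of one-vertex trees $\{\bullet_a : a\in E\}\subset\Cal{T}^E$. Concretely I must establish two things: that grafting makes $\Cal{T}^E$ a pre-Lie algebra, and that for every pre-Lie algebra $(\Cal{A},\vartriangleright)$ and every map $f\colon E\to\Cal{A}$ there is a \emph{unique} pre-Lie morphism $\tilde f\colon\Cal{T}^E\to\Cal{A}$ with $\tilde f(\bullet_a)=f(a)$. The argument therefore breaks into the pre-Lie identity, uniqueness (generation), and existence.

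First I would check that $\to$ is pre-Lie by bookkeeping over vertices. Expanding both sides with the definition \eqref{pre-lie product}, the associator collapses: every term of $s\to(t\to u)$ in which $s$ is grafted onto a vertex coming from $t$ recombines into a term of $(s\to t)\to u$ and cancels, so that
\[
(s\to t)\to u-s\to(t\to u)=-\sum_{v,w\in V(u)} s\to_{v}(t\to_{w}u).
\]
The remaining sum runs over all ways of attaching both $s$ and $t$ to vertices of the \emph{same} tree $u$; since the trees are non-planar, it is visibly invariant under the exchange $s\leftrightarrow t$ (the two attachments commute, including the degenerate case $v=w$). Hence the associator is symmetric in its first two arguments, which is exactly the left pre-Lie identity \eqref{pre-lie identity}, so $(\Cal{T}^E,\to)$ is a pre-Lie algebra.

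Next I would prove that $\Cal{T}^E$ is generated by $E$ under grafting, which forces uniqueness of $\tilde f$: a pre-Lie morphism is determined by its values on generators together with multiplicativity $\tilde f(s\to t)=\tilde f(s)\vartriangleright\tilde f(t)$. For generation I would argue by a double induction, primarily on the number $n$ of vertices and secondarily on the number $m$ of branches at the root. Given $t=B_{+,\,a}(t_1\cdots t_m)$ with $m\ge 1$, grafting the branch $t_1$ onto the smaller tree $B_{+,\,a}(t_2\cdots t_m)$ produces $t$ (the graft at the root) plus a sum of trees with the same $n$ but only $m-1$ branches at the root (the grafts inside $t_2,\dots,t_m$):
\[
t = t_1\to B_{+,\,a}(t_2\cdots t_m)-\sum(\text{trees with }m-1\text{ root branches}).
\]
The first term is a graft of two trees with strictly fewer than $n$ vertices, handled by the primary induction; the correction terms are handled by the secondary induction. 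Single vertices are the generators, giving the base case, so $E$ generates $\Cal{T}^E$ and $\tilde f$ is unique.

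Existence is the genuine difficulty, because $(\Cal{T}^E,\to)$ is \emph{not} free magmatic: grafting produces sums, so one cannot simply extend $f$ along a free magma and hope for consistency. I would instead compare $\Cal{T}^E$ with the abstract free pre-Lie algebra $PL(E)$, the quotient of the free magmatic algebra by the two-sided ideal generated by the pre-Lie relators, which exists by general algebra. Since $\Cal{T}^E$ is pre-Lie, the universal property of $PL(E)$ yields a canonical pre-Lie morphism $\Phi\colon PL(E)\to\Cal{T}^E$ sending each generator to $\bullet_a$; by the previous paragraph $\Phi$ is surjective, and it suffices to prove it injective, i.e. that the decorated non-planar rooted trees are linearly independent in the free pre-Lie algebra. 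This linear independence — equivalently, that the trees are exactly the normal forms and that no nontrivial pre-Lie relation links them — is the main obstacle. I see two ways to supply it: operadically, via the Chapoton--Livernet identification of the pre-Lie operad with the species of rooted trees, which makes $\dim PL(E)_n$ equal to the number of $k$-colored rooted trees on $n$ vertices and hence to $\dim(\Cal{T}^E)_n$; or, in the spirit of the present paper, by exhibiting the trees as the complement $O(I)$ of the set $T(I)$ of leading terms of the pre-Lie ideal $I$ under the magmatic monomial well-order recalled above, so that $O(I)$ is a basis mapped bijectively by $\Phi$ onto the tree basis. Either route yields injectivity, so $\Phi$ is an isomorphism and $(\Cal{T}^E,\to)$ is the free pre-Lie algebra on $E$, as claimed.
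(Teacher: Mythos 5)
The paper itself contains no proof of Theorem \ref{d-generators}: the result is quoted from Chapoton--Livernet \cite{CL01} and Dzhumadil'daev--L\"ofwall \cite{AC02}, with \cite[Paragraph 6.2]{D.M} cited for an elementary treatment. So your attempt has to stand on its own, and its first two parts do: the associator computation $(s\to t)\to u - s\to(t\to u) = -\sum_{v,w\in V(u)} s\to_v(t\to_w u)$, together with the observation that attaching $s$ and $t$ at two (possibly equal) vertices of a non-planar tree $u$ is order-independent, is the standard and correct verification of the left pre-Lie identity \eqref{pre-lie identity} for the grafting product \eqref{pre-lie product}; and your double induction (on the vertex count $n$, then on the number $m$ of root branches, via $t = t_1\to B_{+,\,a}(t_2\cdots t_m) - \hbox{(trees with $m-1$ root branches)}$) correctly shows that the one-vertex trees generate $\Cal{T}^E$ under $\to$, which gives uniqueness of the extension $\tilde f$.

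The genuine gap is exactly where you located it, and neither of your two routes closes it. Route (a) is not a proof but a citation of the theorem itself: the Chapoton--Livernet identification of the pre-Lie operad with the rooted-tree species is equivalent to the statement that the free pre-Lie algebra on $k$ generators is $\Cal{T}^E$, so invoking it for the dimension count assumes what is to be proved. Route (b), as it is actually implemented in this paper, is circular: Proposition \ref{princ} identifies $O(J)$ with the planar trees having nondecreasing branches only by a cardinality comparison $O(J)_n \cong T^E_n$, and that comparison rests on Theorem \ref{span} \emph{together with} Theorem \ref{d-generators} (the paper invokes freeness of $\Cal{T}^E$, via \cite{CL01}, \cite{AC02}, in the proof of Proposition \ref{hom2} and implicitly in Proposition \ref{princ}); only the easy inclusion, Lemma \ref{non-increasing}, is independent of the theorem. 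To make route (b) self-contained you would have to prove directly that no element of the pre-Lie ideal has a nondecreasing-branch tree as leading term, i.e. that the pre-Lie relators form a Gr\"obner--Shirshov basis (all compositions/overlaps reduce to zero); that confluence check is real work --- it is the content of Bokut--Chen--Li \cite{BYL} --- and neither your sketch nor this paper carries it out. So injectivity of $\Phi$, equivalently linear independence of the trees in the abstract free pre-Lie algebra, remains unproved in your argument; the honest options are to execute the inductive argument of \cite[Paragraph 6.2]{D.M} or to cite \cite{CL01}, \cite{AC02} outright, as the paper does.
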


\section{A pre-Lie structure on free Lie algebras}\label{CIIIsecI}
%%%%%%
Let $\Cal{L}(E)$ be the free Lie algebra generated by a (non-empty) disjoint union of subsets $E = \bigsqcup\limits_{i\in N} E_{i}$, where $E_i$ is the subset of elements $a^i_1, \ldots, a^i_{d_i}$ of degree $i$, and $\#E_i=d_i$. The free Lie algebra $\Cal{L}(E)$ can be graded, using the grading of $E$:
\begin{equation}
\Cal{L}(E) = \bigoplus_{i \in N} {\Cal{L}_i}, 
\end{equation}
where $\Cal{L}_i $ is the subspace of all elements of $\Cal{L}(E)$ of degree $i$. In particular $E_i \subset \Cal{L}_i$. Define an operation $\rhd$ on $\Cal{L}(E)$ by:
\begin{equation}\label{rhd}
x \rhd y := \frac{1}{|x|} [x,y], 
\end{equation} 
for $x, y \in \Cal{L}(E)$.
\begin{prop}\label{rhd is pre-lie}
 The operation $\rhd$ defined by \eqref{rhd} is a bilinear product which satisfies the pre-Lie identity. 
\end{prop}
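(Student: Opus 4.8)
The plan is to reduce the pre-Lie identity \eqref{pre-lie identity} for $\rhd$ to the Jacobi identity \eqref{Jacobi identity} for the Lie bracket, exploiting the fact that the grading of $\Cal{L}(E)$ is compatible with the bracket, so that $[\Cal{L}_i, \Cal{L}_j] \subseteq \Cal{L}_{i+j}$ and hence $|x \rhd y| = |x| + |y|$ for homogeneous $x, y$.

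First I would settle bilinearity. Linearity in the second argument is immediate, since $[x, \cdot]$ is linear and the scalar $\frac{1}{|x|}$ does not involve $y$. For the first argument the weight $\frac{1}{|x|}$ depends on the degree of $x$, so $\rhd$ must be defined on homogeneous elements and then extended by linearity in the first slot; one checks that this is consistent, the homogeneous decomposition of an element of $\Cal{L}(E)$ being unique, and that it respects addition and scalar multiplication of homogeneous elements of the same degree. Thus $\rhd$ is a genuine bilinear operation on $\Cal{L}(E)$, and, by bilinearity, it suffices to verify the pre-Lie identity on homogeneous elements $x, y, z$ of degrees $a = |x|$, $b = |y|$, $c = |z|$.

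Next I would compute the relevant products explicitly. Since $x \rhd y = \frac{1}{a}[x,y]$ is homogeneous of degree $a + b$, one gets
$$(x \rhd y) \rhd z = \frac{1}{(a+b)\,a}[[x,y],z], \qquad x \rhd (y \rhd z) = \frac{1}{a\,b}[x,[y,z]],$$
and symmetrically for $(y \rhd x) \rhd z$ and $y \rhd (x \rhd z)$. Forming the difference of the two sides of \eqref{pre-lie identity}, using the anti-symmetry $[[y,x],z] = -[[x,y],z]$ together with the elementary identity $\frac{1}{(a+b)a} + \frac{1}{(a+b)b} = \frac{1}{ab}$, the coefficients collapse and the entire expression reduces to
$$\frac{1}{ab}\Big([[x,y],z] - [x,[y,z]] + [y,[x,z]]\Big).$$

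Finally I would recognize the bracketed quantity as exactly the Jacobi expression: rewriting $[[y,z],x] = -[x,[y,z]]$ and $[[z,x],y] = [y,[x,z]]$ via anti-symmetry turns \eqref{Jacobi identity} into $[[x,y],z] - [x,[y,z]] + [y,[x,z]] = 0$, so the difference vanishes and the pre-Lie identity holds. The only genuinely delicate point is the bookkeeping for bilinearity in the first variable — making precise that the weight $\frac{1}{|x|}$ forces a homogeneous-component definition — together with the compatibility $[\Cal{L}_i, \Cal{L}_j] \subseteq \Cal{L}_{i+j}$, which is what guarantees $|x \rhd y| = |x| + |y|$. Once this grading input is in place, the verification of the identity itself is a short, purely formal manipulation of the bracket.
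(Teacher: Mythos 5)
Your proof is correct and takes essentially the same route as the paper: both verify the identity on homogeneous elements by reducing it to the Jacobi identity plus anti-symmetry, using the same coefficient arithmetic $\frac{1}{a(a+b)}+\frac{1}{b(a+b)}=\frac{1}{ab}$. The only differences are cosmetic --- you form the difference of the two sides and exhibit it as $\frac{1}{ab}$ times the Jacobi expression, whereas the paper runs a chain of equalities from one side to the other, and you additionally spell out the bilinearity/homogeneous-extension point that the paper leaves implicit.
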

\begin{proof}
For $x, y, z \in \Cal{L}(E)$, we have: 
\begin{align*}
(x\rhd y) \rhd z - x \rhd (y \rhd z) &= \frac{1}{|x|} [x,y] \rhd z - \frac{1}{|y|} x \rhd [y,z] &  \\ 
&= \frac{1}{|x| \big(|x| + |y|\big)} [[x,y],z] - \frac{1}{|x||y|} [x,[y,z]]& \\
&= \frac{1}{|x| \big( |x| + |y| \big)} [[x,y],z] - \frac{1}{|x||y|} \big([[x,y],z] - [y,[z,x]]\big),\hbox{ since }&  \\ 
& \; \; \; \; [x,[y,z]] + [z,[x,y]] + [y,[z,x]] = 0 \hbox{ (the Jacobi identity) } & \\
&= \left(\frac{1}{|x|}\right) \left(\frac{|y| - \big(|x| + |y|\big)}{|y| \big(|x| + |y|\big)} \right) [[x,y],z] + \frac{1}{|x||y|} [y,[z,x]] & \\
&=\frac{1}{|y| \big(|x| + |y|\big)} [[y,x],z] - \frac{1}{|x||y|} [y,[x,z]] & \\ 
&= (y\rhd x) \rhd z - y \rhd (x \rhd z).&
\end{align*}
Then $\Cal{L}(E)$\label{CL3} together with $\rhd$ forms a graded pre-Lie algebra generated by $E$.
\end{proof}

A pre-Lie algebra structure can be put this way on any $\mathbb{N}$-graded Lie algebra $\Cal{L}$ such that $\Cal{L}_0 = \{0\}$. Another pre-Lie bracket, proposed on $\Cal{L}$ by T. Schedler \cite{T.S}\footnote{For more details about this construction of pre-Lie algebra see \cite[Proposition 3.3.3]{T.S} and \cite{LF}.}, is given by:

\begin{equation}\label{def. of Brhd}
x \blacktriangleright y = \frac{|y|}{|x| + |y|} [x, y], \hbox{ for any } x, y \in \Cal{L}.
\end{equation}

\noindent These two constructions are isomorphic, via the linear map: 

\[ 
\alpha : \left\{
                \begin{array}{ll}\label{def of alpha}
                  \big(\Cal{L}, \blacktriangleright\!\!\big)  \longrightarrow \big(\Cal{L}, \rhd\big),\\
                  \;\;\;\;\;\;\,x \longmapsto |x| x.
                \end{array}
              \right.
							\]
Indeed, $\alpha$ is a bijection, and for any $x, y \in \Cal{L}$ we have:
\begin{align*}
\alpha( x \blacktriangleright y) & = \alpha(\frac{|y|}{|x| + |y|} [x, y]), \hbox{ (by the definition of $\blacktriangleright$ in \eqref{def. of Brhd}), }&\\
& =  \frac{|y|}{|x| + |y|} \alpha([x, y])&\\
& =  \frac{|y|}{|x| + |y|} (|x| + |y|) [x, y], \hbox{ (by the definition of $\alpha$ above), }&\\
& = |y| [x, y]&\\
& = \frac{|x| |y|}{|x|} [x, y]&\\
& = (|x| |y|) x \rhd y, \hbox{ (by the definition of $\rhd$ in \eqref{rhd}), }&\\
& = |x| x \rhd |y| y&\\
& = \alpha(x) \rhd \alpha(y).&
\end{align*} 						
						
Denote by $[\cdot, \cdot]_ {\rhd}$\label{CL4} the underlying Lie bracket induced by the pre-Lie product $\rhd$, which is defined by:
\begin{equation}\label{nbr}
[x, y]_ {\rhd} = x \rhd y - y \rhd x, \hbox{ for } x, y \in \Cal{L}.
\end{equation}
Then the two Lie structures defined on $\Cal{L}$ by the Lie brackets $[\cdot, \cdot]$, $[\cdot, \cdot]_ {\rhd}$ respectively,   are also isomorphic via $\alpha$. Indeed, by substituting the pre-Lie product $\rhd$, described in \eqref{rhd}, by the Lie bracket $[\cdot, \cdot]$ in the definition of the Lie bracket $[\cdot, \cdot]_ {\rhd}$ in \eqref{nbr}, we get:
\begin{equation}\label{intermid}
[x, y]_{\rhd}  = \frac{1}{|x|} [x, y] - \frac{1}{|y|} [y,x] = \frac{|x| + |y|}{|x|\,|y|} [x, y], \hbox{ for any } x, y \in \Cal{L},
\end{equation}
but, 
\begin{align*}
\alpha\big([x, y]\big) = |[x, y]|\, [x, y] &=\big(|x| + |y|\big) [x, y] &\\
&= |x| |y| [x, y]_{\rhd} &  \hbox{ (by \eqref{intermid}) } \\
&= [|x| x, |y| y]_ {\rhd}& \\
&= [\alpha(x), \alpha(y)]_ {\rhd}& \hbox{ (by \eqref{def of alpha}). }
\end{align*} 

For any (non-planar) rooted tree $t$, we can decorate the vertices of $t$ by elements of $E$, by means of a map $d: V(t) \rightarrow E$, where $V(t)$ is the set of vertices of $t$. Denote by $T^{E}$ the set of all (non-planar) rooted trees decorated by the elements of $E$, define the degree $|t|$ of a decorated tree $t$ in $T^{E}$ by:
\begin{equation}\label{degree of tree}
|t|:= \sum_{v \in V(t)} {|d(v)|} 
\end{equation}

In particular, there is a unique pre-Lie homomorphism $\Phi$ from $\big(\Cal{T}^{E}, \to\!\!\big)$ onto $\big(\Cal{L}(E), \rhd\big)$\label{CL5}, such that:
\begin{equation}\label{hom}
\Phi(\racineLabaa)= a \hbox{ for any } a \in E. 
\end{equation}
If we take $t= t_1 \to (t_2 \to(\cdots \to(t_k \to \racineLabaa) \cdots ))  \in \Cal{T}^{E}\!\!,$ then:
\begin{equation}\label{homom}
\Phi(t)= x_1 \rhd (x_2 \rhd ( \cdots \rhd (x_k \rhd a) \cdots )), 
\end{equation}
with $x_i = \Phi(t_i) , \hbox{ and } |t_i|= |x_i|, \forall i=1, \ldots, k$\label{deg1}. Let $I$ be the two-sided ideal of $\Cal{T}^{E}$ generated by all elements on the form:
\begin{equation}\label{def of I}
|s|\big(s \to t\big) + |t|\big(t \to s\big), \hbox{ for } s, t \in \Cal{T}^{E}.
\end{equation}
The ideal $I$ satisfies the following properties:

\begin{prop}\label{quotient}
The quotient $\Cal{L}'(E):=\Cal{T}^{E}/ I$ has structures of pre-Lie algebra and Lie algebra, respectively. 
\end{prop}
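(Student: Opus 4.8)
The plan is to obtain both structures essentially for free, exploiting the fact that pre-Lie algebras form a variety defined by the multilinear identity \eqref{pre-lie identity}, so that quotients by two-sided ideals remain in the variety. The starting observation is that $\big(\Cal{T}^{E}, \to\big)$ is a pre-Lie algebra (indeed the free one, by Theorem \ref{d-generators}), and that $I$ is, by its very definition in \eqref{def of I}, a two-sided ideal of this pre-Lie algebra, i.e. $I \to \Cal{T}^{E} \subseteq I$ and $\Cal{T}^{E} \to I \subseteq I$.

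First I would verify that the grafting product descends to the quotient. Setting $\bar{x} \to \bar{y} := \overline{x \to y}$, well-definedness follows from the bilinearity computation $x \to y - x' \to y' = (x - x') \to y + x' \to (y - y')$, whose right-hand side lies in $I$ whenever $x - x'$ and $y - y'$ do, precisely because $I$ is two-sided. Next, since identity \eqref{pre-lie identity} holds for all elements of $\Cal{T}^{E}$, it holds a fortiori for their classes, so $\big(\Cal{L}'(E), \to\big)$ is a pre-Lie algebra. I would also record that $I$ is a graded ideal, since each generator $|s|(s \to t) + |t|(t \to s)$ is homogeneous of degree $|s| + |t|$ for the grading \eqref{degree of tree}; hence $\Cal{L}'(E)$ inherits the grading.

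For the Lie structure, I would invoke the general fact recalled in Section \ref{section-two}: the commutator $[x, y] = x \to y - y \to x$ associated to any pre-Lie product is antisymmetric and satisfies the Jacobi identity \eqref{Jacobi identity}. Applying this to the pre-Lie algebra $\Cal{L}'(E)$ just constructed endows it with a Lie bracket, which completes the proof.

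As for difficulty: there is no genuine obstacle here, the content being formal once the variety viewpoint is adopted. The only point demanding any care is the confirmation that $I$ is closed under left and right grafting by arbitrary elements of $\Cal{T}^{E}$ — which holds by construction, "two-sided ideal generated by" meaning the smallest such subspace — together with the routine check that the quotient product is well-defined.
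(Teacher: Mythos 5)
Your proof is correct, but your route to the Lie structure is genuinely different from the paper's. For the pre-Lie part you and the paper agree: $I$ is a two-sided ideal for $\to$, so the quotient product $\overline{s}\rhd^{*}\overline{t}:=\overline{s\to t}$ is well defined and inherits the pre-Lie identity, since that identity is multilinear and holds on representatives. For the Lie part, however, you take the commutator $\overline{s\to t}-\overline{t\to s}$ of the quotient pre-Lie product and invoke the general fact that the commutator of a pre-Lie product is a Lie bracket; this argument never uses the specific generators of $I$ and would work for any two-sided ideal. The paper instead defines a weighted bracket $[\overline{s},\overline{t}]:=|s|\,\overline{s\to t}$ and verifies its axioms by hand: anti-symmetry is exactly where the generators $|s|(s\to t)+|t|(t\to s)$ of $I$ from \eqref{def of I} enter, and the Jacobi identity is checked by a computation combining anti-symmetry with the pre-Lie identity. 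The two brackets do not coincide: on homogeneous classes your commutator equals $\frac{|s|+|t|}{|s|\,|t|}$ times the paper's bracket (compare \eqref{intermid}), so they are distinct, though isomorphic, Lie structures — the isomorphism being the rescaling $x\mapsto|x|x$ used for the map $\alpha$ in Section \ref{CIIIsecI}. What the paper's choice buys is compatibility with the rest of the argument: under $\Phi$ (Proposition \ref{hom2}) its bracket is carried to the genuine free Lie bracket $[\cdot,\cdot]$ on $\Cal{L}(E)$, whereas yours is carried to the rescaled bracket $[\cdot,\cdot]_{\rhd}$ of \eqref{nbr}. For the proposition as stated, either construction suffices; your version is shorter and more general, the paper's is the one actually used downstream.
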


\begin{proof}
Using the pre-Lie grafting $\to$ defined on $\Cal{T}^{E}$\!, we can define the following operations on $\Cal{L}'(E)$:
\begin{equation}\label{pre-lie}
\overline{s} \rhd^{\!\!*} \overline{t} := \overline{s} \to \overline{t}:= \overline{s \to t}, 
\end{equation}
\begin{equation}\label{lie bracket}
[\overline{s}, \overline{t}] := \overline{[s, t]}:= |s| \overline{s \to t}, 
\end{equation} 
for any $s, t \in \Cal{T}^{E}$, where the bar stands for the class modulo $I$. The product in \eqref{pre-lie} is pre-Lie by definition. The bracket defined in \eqref{lie bracket} is well-defined and satisfies the following identities:
\begin{enumerate}
\item The anti-symmetry identity: for any $s, t \in \Cal{T}^{E}$, we have
\begin{align*}
[\overline{s}, \overline{t}] = - [\overline{t}, \overline{s}], \hbox{ since, } |s|(s \to t) + |t|(t \to s) \in I. 
\end{align*}

\item\label{J.I} The Jacobi identity: for any $s, t, t' \in \Cal{T}^{E}$, then
\begin{align*}
[\overline{s}, [\overline{t}, \overline{t'}]] + [[\overline{s}, \overline{t'}], \overline{t}] &= |s| |t|\,\big( \overline{s \to (t \to t')}\big) &\\ & + |s|(|s|+|t'|)\, \big(\overline{(s \to t') \to t} )\big) & \\
\hbox{ (using the anti-symmetry identity) } \longrightarrow &= |s| |t|\, \big( (\overline{s \to (t \to t')})  -  (\overline{t \to (s \to t')}) \big) 
& \\
\hbox{ (using the pre-Lie identity) } \longrightarrow &= |s| |t|\,\big( \big( \overline{(s \to t) \to t'} \big) - \big( \overline{(t \to s) \to t'} \big) \big)& \\
&=|s| |t|\,\big( (\overline{s \to t - t \to s} ) \to \overline{t'} \big) \\
\hbox{ (using the anti-symmetry identity) } \longrightarrow &=|s| |t| \big(( \overline{s \to t} + \frac{|s|}{|t|} \overline{s \to t})\to \overline{t'}\big)& \\
&=|s| |t| \frac{|s|+|t|}{|t|}\, \big( (\overline{s \to t}) \to \overline{t'}\big)& \\
&=|s| (|s|+|t|)\,\big( \overline{(s \to t) \to t'}\big)&\\
&=[[\overline{s}, \overline{t}], \overline{t'}].& 
\end{align*}
\end{enumerate}  
\end{proof}
%\vfill
\begin{prop}\label{hom2}
$I = Ker\,\Phi.$ 
\end{prop}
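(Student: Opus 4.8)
The plan is to establish the two inclusions separately. The inclusion $I\subseteq\ker\Phi$ is the easy half: since a pre-Lie homomorphism has a two-sided ideal as kernel, it suffices to check that $\Phi$ annihilates the generators of $I$. The reverse inclusion $\ker\Phi\subseteq I$ is the substantial part, and I would obtain it by showing that the map $\overline\Phi\colon\Cal{L}'(E)=\Cal{T}^{E}/I\to\Cal{L}(E)$ induced by $\Phi$ is injective, in fact an isomorphism, by building an explicit inverse through the universal property of $\Cal{L}(E)$.

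First I would record that $\Phi$ is degree-preserving, i.e. $|\Phi(s)|=|s|$ for every $s\in\Cal{T}^{E}$; this follows by induction from \eqref{homom} together with $|t_i|=|x_i|$ and the fact that $\rhd$ raises degree additively. Granting this, for the generators I set $x=\Phi(s)$, $y=\Phi(t)$ (so $|x|=|s|$, $|y|=|t|$) and compute, using that $\Phi$ is a pre-Lie homomorphism and \eqref{rhd},
\[
\Phi\!\big(|s|(s\to t)+|t|(t\to s)\big)=|x|\,(x\rhd y)+|y|\,(y\rhd x)=[x,y]+[y,x]=0
\]
by antisymmetry. Since $\ker\Phi$ is a two-sided ideal, $I\subseteq\ker\Phi$, and $\Phi$ descends to a pre-Lie homomorphism $\overline\Phi$ with $\overline\Phi(\overline{\racineLabaa})=a$.

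For the reverse inclusion, the key observation is that $\overline\Phi$ carries the Lie bracket \eqref{lie bracket} of $\Cal{L}'(E)$ to the \emph{original} bracket of $\Cal{L}(E)$: indeed $\overline\Phi([\overline s,\overline t])=|s|\,\Phi(s\to t)=|s|\cdot\tfrac{1}{|s|}[\Phi(s),\Phi(t)]=[\overline\Phi(\overline s),\overline\Phi(\overline t)]$, again by degree-preservation. By the universal property of the free Lie algebra (Figure \ref{up}), applied to the Lie algebra $(\Cal{L}'(E),[\cdot,\cdot])$ of Proposition \ref{quotient} and the assignment $a\mapsto\overline{\racineLabaa}$, there is a unique Lie homomorphism $\psi\colon\Cal{L}(E)\to\Cal{L}'(E)$ with $\psi(a)=\overline{\racineLabaa}$. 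Then $\overline\Phi\circ\psi$ is a Lie endomorphism of $\Cal{L}(E)$ fixing every $a\in E$, hence the identity since $E$ generates $\Cal{L}(E)$. Conversely $\psi\circ\overline\Phi$ is a Lie endomorphism of $(\Cal{L}'(E),[\cdot,\cdot])$ fixing each class $\overline{\racineLabaa}$; since by Theorem \ref{d-generators} the pre-Lie algebra $\Cal{T}^{E}$ is generated by its single-vertex trees, so is $\Cal{L}'(E)$ under $\rhd^{*}$, and because \eqref{pre-lie}--\eqref{lie bracket} give $\overline s\rhd^{*}\overline t=\tfrac1{|s|}[\overline s,\overline t]$ (with $|s|\geq 1$), the Lie subalgebra generated by the single-vertex classes is already closed under $\rhd^{*}$ and hence exhausts $\Cal{L}'(E)$. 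Thus $\psi\circ\overline\Phi=\mathrm{id}$, so $\overline\Phi$ is bijective and $\ker\Phi=I$.

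The main obstacle is keeping track of the two distinct Lie brackets in play. The map $\overline\Phi$ is manifestly compatible with the pre-Lie products $\rhd^{*}$ and $\rhd$, but the universal property producing the inverse $\psi$ must be invoked for the original brackets $[\cdot,\cdot]$ on both sides (the ones making $\Cal{L}(E)$ literally free on $E$), not for the commutators of the pre-Lie products. The passage between these viewpoints rests entirely on the normalization $x\rhd y=\tfrac1{|x|}[x,y]$ together with degree-preservation $|\Phi(s)|=|s|$; I would therefore make degree-preservation the very first lemma, since every bracket-compatibility identity above silently depends on it.
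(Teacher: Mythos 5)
Your proof is correct, but it takes a genuinely different route from the paper's. The paper never works with the induced map $\overline\Phi$ on the quotient: it lifts everything to the free magmatic algebra $\Cal{M}_E$, introduces the rescaled product $x\ast y=|x|\,x\cdot y$, shows $(\Cal{M}_E,\cdot)\cong(\Cal{M}_E,\ast)$ (Lemma \ref{magmatic isomorphism}), and proves that the ideal $J$ of antisymmetry and Jacobi relations for $\ast$ equals the ideal $J'$ generated by the pre-Lie identity and the weighted antisymmetry elements $|x|\,x\cdot y+|y|\,y\cdot x$ (Lemma \ref{relation between J and J'}); writing $\Cal{T}^E=\Cal{M}_E/J'_2$ and $\Cal{L}(E)=(\Cal{M}_E,\ast)/J$, the chain $\mop{Ker}(\Phi\circ q')=J'=J=\mop{Ker}q$ then yields $\mop{Ker}\Phi=q'(J')=I$. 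You instead stay at the level of the quotient algebras: the easy inclusion $I\subseteq\mop{Ker}\Phi$ via degree preservation, then injectivity of $\overline\Phi$ by building an explicit inverse $\psi$ from the universal property of $\Cal{L}(E)$, the crucial auxiliary fact being that $\Cal{L}'(E)$ is generated \emph{as a Lie algebra} by the one-vertex classes, because $\rhd^{\!*}$ and the bracket \eqref{lie bracket} are proportional on homogeneous elements. What the paper's magmatic detour buys is the presentation itself: the identity $J=J'$ isolates ``pre-Lie plus weighted antisymmetry'' as defining relations for the free Lie algebra, which is exactly the shape of the planar ideal $J'$ exploited in Section \ref{mwo}. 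What your route buys is economy — no magmatic algebra, no Lemmas \ref{magmatic isomorphism} and \ref{relation between J and J'} — and it produces at once the isomorphism $\Cal{L}'(E)\cong\Cal{L}(E)$ that the paper only records afterwards as a corollary; it does, however, lean on Proposition \ref{quotient} (well-definedness and Jacobi identity for \eqref{lie bracket}), which the paper establishes by direct computation. Two small points you should tighten: phrase degree preservation as ``$\Phi$ maps the degree-$n$ homogeneous component of $\Cal{T}^E$ into $\Cal{L}_n$'' (so that the identity $|s|\,\Phi(s\to t)=[\Phi(s),\Phi(t)]$ also covers the case $\Phi(s)=0$, where $|\Phi(s)|$ is meaningless), and note explicitly that the Lie subalgebra generated by the one-vertex classes is graded, so that the proportionality $\overline s\rhd^{\!*}\overline t=\tfrac{1}{|s|}[\overline s,\overline t]$ can legitimately be applied componentwise in your closure argument.
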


Let $\big(M(E), \cdot \big)$ be the free magma generated by $E$\label{E26}, and let $\Cal{M}_E$ be the free magmatic algebra generated by $E$, i.e. the linear span of the magma $M(E)$. Define a new magmatic product $\ast$ on $M(E)$ by:
\begin{equation}
x \ast y := |x| x \cdot y
\end{equation}
for any $x, y \in M(E)$, and extend bilinearly. We need, to prove Proposition \ref{hom2}, to introduce the following lemmas.

\begin{lem}\label{magmatic isomorphism}
The two magmatic algebras $\big(\Cal{M}_E, \cdot\big)$ and $\big(\Cal{M}_E, \ast\big)$ are isomorphic. 
\end{lem}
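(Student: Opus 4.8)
The plan is to exhibit an explicit isomorphism by rescaling each basis element of $\Cal{M}_E$ (that is, each element of the free magma $M(E)$) by a suitable nonzero scalar. The first thing to notice is that the naive candidate $x \mapsto f(|x|)\,x$, scaling only according to the degree, cannot work: comparing the images of $x\cdot y$ and $y\cdot x$ would force $f(|x|)f(|y|)|x| = f(|x|)f(|y|)|y|$, hence $|x|=|y|$, which fails in general because $\ast$ is not graded-symmetric. So the correct scaling must depend on the whole tree $x$, not merely on its degree, and the natural way to produce it is to exploit the unique factorization available in the free magma. This is the exact analogue of the map $\alpha$ used earlier to compare the two pre-Lie brackets $\rhd$ and $\blacktriangleright$.

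Concretely, I would define a family of scalars $(\lambda_x)_{x\in M(E)}$ by recursion on the magma structure:
\begin{equation}
\lambda_a := 1 \quad (a \in E), \qquad \lambda_{x\cdot y} := |x|\,\lambda_x\,\lambda_y \quad (x,y \in M(E)).
\end{equation}
This is well defined precisely because $M(E)$ is free: every element is either a generator or can be written in a unique way as a product $x\cdot y$, so the recursion has no ambiguity. I then set $\psi(x) := \lambda_x\,x$ on basis elements and extend linearly to all of $\Cal{M}_E$.

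The key step is to verify that $\psi$ intertwines the two products. Since both $\cdot$ and $\ast$ are bilinear, it suffices to check the identity on basis elements, where it is an immediate consequence of the recursion: for $x,y\in M(E)$,
\begin{equation}
\psi(x)\ast\psi(y) = \lambda_x\lambda_y\,(x\ast y) = \lambda_x\lambda_y\,|x|\,(x\cdot y) = \lambda_{x\cdot y}\,(x\cdot y) = \psi(x\cdot y),
\end{equation}
using $x\ast y = |x|\,x\cdot y$ in the second equality and the recursive definition of $\lambda$ in the third.

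Finally I would check bijectivity. All decorations have positive degree (there are no degree-$0$ generators, consistently with the division by $|x|$ in the pre-Lie product $\rhd$), so $|x|\geq 1$ for every $x\in M(E)$, and an easy induction on the tree structure gives $\lambda_x\neq 0$. Hence $\psi$ is diagonal in the magma basis with nonzero entries, its inverse being $x\mapsto \lambda_x^{-1}x$; in particular $\psi$ is a linear bijection, and therefore an isomorphism of magmatic algebras. I do not expect a serious obstacle here: the only real content is the conceptual point that the rescaling must be read off recursively from the tree rather than from the degree alone, after which both the homomorphism property and the invertibility are formal.
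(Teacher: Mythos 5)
Your proof is correct and essentially the same as the paper's: your recursively defined scalars $\lambda_x$ coincide exactly with the paper's function $f$ (given by $f(a)=1$ and $f(x\cdot y)=|x|\,f(x)f(y)$), so your map $\psi$ is precisely the paper's isomorphism $\gamma$. The only cosmetic difference is that the paper first invokes the universal property of the free magmatic algebra to obtain a morphism fixing the generators and then identifies its diagonal form $\gamma(z)=f(z)\,z$ by induction, whereas you define the diagonal rescaling directly and verify the homomorphism property and bijectivity by hand.
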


\begin{proof}
By universal property of the free magmatic algebra, there is a unique morphism $\gamma : \big(\Cal{M}_E, \cdot\big) \rightarrow \big(\Cal{M}_E, \ast\big)$ such that $\gamma(a) = a$, for any $a \in E$. For any $x, y \in \Cal{M}_E$, we have:
\begin{equation}
\gamma(x \cdot y) = \gamma(x) \ast \gamma(y) = |\gamma(x)|\,\gamma(x) \cdot \gamma(y). 
\end{equation}
Hence one can see, by induction on the degree of elements of the magma $M(E)$, that we have for any $z \in M(E)$:
\begin{equation}\label{definition of magmma}
\gamma(z) = f(z)\, z,
\end{equation}
where $f: M(E) \rightarrow \mathbb{N}$ is recursively given by:
$f(a) =1$, for any $a \in E$, and $f(x \cdot y) = |x| f(x) f(y)$ for $x, y \in M(E)$ (for more details about this mapping see Example \ref{details} below). Hence $\gamma$ is an isomorphism.  
\end{proof}
Now, let $J$ be the two-sided ideal generated by the the anti-symmetry and the Jacobi identities on $\big( \Cal{M}_E, \ast \big)$, and let $J'$ be the two-sided ideal of $\big(\Cal{M}_E, \cdot \big)$ generated by the pre-Lie identity and the elements on the form:
\begin{equation}\label{weighted anti-symmetry} 
|x| x \cdot y + |y| y \cdot x, \hbox{ for x, y } \in M(E).
\end{equation}

\begin{lem}\label{relation between J and J'}
$J = J'$.
\end{lem}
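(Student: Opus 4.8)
The plan is to deduce $J=J'$ from a single computation showing that, modulo the anti-symmetry relations common to both ideals, the pre-Lie generator and the Jacobi generator are proportional. The mechanism behind Lemma \ref{magmatic isomorphism} is the key: on homogeneous elements the two products differ only by a nonzero scalar, $x\ast y=|x|\,x\cdot y$ with $|x|>0$, hence also $x\cdot y=\tfrac1{|x|}\,x\ast y$, and likewise for right multiplication. Consequently a graded subspace of $\Cal M_E$ is a two-sided ideal for $\cdot$ if and only if it is one for $\ast$. Since all the defining relations of $J$ and $J'$ are homogeneous, both ideals are graded, so ``two-sided ideal'' means the same thing for either product; it therefore suffices to prove that each generating family lies in the other ideal.

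Next I would identify the anti-symmetry parts. For $x,y\in M(E)$ the anti-symmetry relation for $\ast$ is $x\ast y+y\ast x=|x|\,x\cdot y+|y|\,y\cdot x$, which is exactly the weighted anti-symmetry generator \eqref{weighted anti-symmetry} of $J'$. Writing $\langle A\rangle$ for the two-sided ideal generated by these common elements, we have $\langle A\rangle\subseteq J\cap J'$, and, denoting by $\langle A,\cdots\rangle$ the ideal generated by $A$ together with a further family, $J=\langle A,\ \mathrm{Jac}\rangle$ and $J'=\langle A,\ P\rangle$, where $\mathrm{Jac}=(x\ast y)\ast z+(y\ast z)\ast x+(z\ast x)\ast y$ is the Jacobi generator and $P=(x\cdot y)\cdot z-x\cdot(y\cdot z)-(y\cdot x)\cdot z+y\cdot(x\cdot z)$ is the pre-Lie generator.

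The heart of the argument is then the identity
\begin{equation*}
|x|\,|y|\,P\ \equiv\ \mathrm{Jac}\pmod{\langle A\rangle},
\end{equation*}
valid for all homogeneous $x,y,z$. To obtain it I would rewrite each term of $P$ in terms of $\ast$ using $u\cdot v=\tfrac1{|u|}\,u\ast v$ together with $|x\cdot y|=|x|+|y|$, and then apply the anti-symmetry relations (multiplied by fixed elements, so that the discarded terms genuinely lie in $\langle A\rangle$) to replace $(y\ast x)\ast z$ by $-(x\ast y)\ast z$, then $x\ast(y\ast z)$ by $-(y\ast z)\ast x$ and $y\ast(x\ast z)$ by $(z\ast x)\ast y$; the scalar coefficients collapse via $\tfrac1{|x|(|x|+|y|)}+\tfrac1{|y|(|x|+|y|)}=\tfrac1{|x||y|}$. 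This is precisely the classical equivalence of the pre-Lie and Jacobi identities under anti-symmetry, i.e. the computation already carried out in the proof of Proposition \ref{quotient}. From $|x||y|\,P-\mathrm{Jac}\in\langle A\rangle$ I then read off $\mathrm{Jac}\in\langle A,P\rangle=J'$ and $P\in\langle A,\mathrm{Jac}\rangle=J$, which gives both inclusions and hence $J=J'$.

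The only genuine obstacle is this last identity; everything else is bookkeeping. The point requiring care is that each reduction step uses anti-symmetry alone, multiplied by a fixed element, so that the error term lands in $\langle A\rangle$ rather than merely in one of $J$, $J'$ — and it is exactly this stronger statement that lets a single computation establish both inclusions simultaneously.
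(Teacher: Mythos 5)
Your proof is correct and follows essentially the same route as the paper: both arguments work modulo the ideal $\langle A\rangle=J'_1$ generated by the common (weighted) anti-symmetry elements and show that the pre-Lie generator and the Jacobi generator agree up to the factor $|x|\,|y|$, which yields both inclusions at once. The only genuine additions on your side are cosmetic or clarifying — you use the left-nested form of the Jacobi element (equivalent modulo $\langle A\rangle$) and you make explicit the graded-ideal argument showing that ``two-sided ideal'' means the same for $\cdot$ and $\ast$, a point the paper uses implicitly when it identifies $J'_1$ with the ideal generated by $x\ast y+y\ast x$.
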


\begin{proof}
Let $J'_1$ be the ideal generated by the elements \eqref{weighted anti-symmetry}. Equivalently, $J'_1$ is generated by the elements $x\ast y + y \ast x, \hbox{ for } x, y \in M(E)$. We have:
\begin{align*}
x \cdot(y \cdot z) - (x \cdot y) \cdot z - y \cdot (x \cdot z) + (y \cdot x) \cdot z = &\frac{1}{|x| |y|} x \ast (y \ast z) - \frac{1}{|x|(|x|+|y|)}(x \ast y) \ast z&\\
   &- \frac{1}{|x| |y|}y \ast (x \ast z) +  \frac{1}{|y|(|x|+|y|)}(y \ast x) \ast z&\\
= &\frac{1}{|x||y|(|x|+|y|)} \big( (|x|+|y|) x \ast (y \ast z) - |y| (x \ast y) \ast z&\\
   &- (|x|+|y|) y \ast (x \ast z) +  |x|(y \ast x) \ast z \big)&\\
= & \frac{1}{|x| |y|} \big( - y \ast (x \ast z) + (y \ast x) \ast z + x \ast(y \ast z) \big)&\\
 &- \frac{1}{|x|(|x|+|y|)} ( y \ast x + x \ast y) \ast z&\\
= & \frac{1}{|x| |y|} \big(x \ast (y \ast z) + y \ast ( z \ast x) + z \ast(x \ast y) \big)\,modulo\,J'_1,&\\
\end{align*}
hence $x \cdot(y \cdot z) - (x \cdot y) \cdot z - y \cdot (x \cdot z) + (y \cdot x) \cdot z \in J$. This means $J' \subset J$.\\

Conversely,
\begin{equation}\label{conversely}
x \ast (y \ast z) + y \ast ( z \ast x) + z \ast(x \ast y) = |x||y| \big(x \cdot(y \cdot z) - (x \cdot y) \cdot z - y \cdot (x \cdot z) + (y \cdot x) \cdot z \big)\,modulo\,J'_1,
\end{equation}
hence the left-hand side of \eqref{conversely} belongs to $J'$, which proves the inverse inclusion.
\end{proof}

\begin{proof}[Proof of Proposition \ref{hom2}]
 The free pre-Lie algebra generated by $E$ is given by $\Cal{T}^E$ \cite{CL01}, \cite{AC02}. Hence, the quotient $\Cal{L}'(E)= \big(\Cal{M}_E, \cdot\big) / J' = \Cal{T}^E / I$ is a pre-Lie (respectively Lie) algebra. The Lie algebra $\Cal{L}(E) = \big(\Cal{M}_E, \ast\big) / J$ carries a pre-Lie algebra structure induced by the product defined in \eqref{rhd}, such that the free pre-Lie algebra $\Cal{P\!L}(E):= \Cal{M}_E / J'_2 = \Cal{T}^E$, where $J'_2$ is the two-sided ideal generated by the pre-Lie identity on $\big( \Cal{M}_E, \cdot \big) $\label{E27}, is homomorphic to $\Cal{L}(E)$ by $\Phi$ described in \eqref{hom} and \eqref{homom}, as pre-lie algebras, as in the commutative diagram in Figure \ref{description of relations}, where $q, q'$ therein are quotient maps. 
\ignore{
\begin{figure}[h]
\diagrama{
\xymatrix{
E \ar@{^{(}->}[r]^-{i}\ar@{_{(}->}[dr]_{j} & \big( \Cal{M}_E, \cdot \big) \ar@{->>}[dr]^{q'} \ar@{-->}[d]^{Id} \ar@{->>}[rrr]^{q}& & &L'(E) \ar@{->}[ddll]^{\widetilde{\gamma}}\\
& \big( \Cal{M}_E, \ast \big) \ar@{->>}[dr]^{q'''} &  \Cal{T}^{E}  \ar@{->>}[urr]^{q''} \ar@{->>}[d]^{\Phi}\\
& & \Cal{L}(E)& &}}
\caption{}
\label{description of relations}
\end{figure}
}
\begin{figure}[!]
\diagrama{
\xymatrix{
E \ar@{^{(}->}[r]^-{i}\ar@{_{(}->}[dr]_{j} & \big( \Cal{M}_E, \cdot \big) \ar@{->>}[dr]^{q'} \ar@{-->}[d]^{Id} & \\
& \big( \Cal{M}_E, \ast \big) \ar@{->>}[dr]^{q} &  \Cal{T}^{E}   \ar@{->>}[d]^{\Phi}\\
& & \Cal{L}(E)}}
\caption{}
\label{description of relations}
\end{figure}
From Figure \ref{description of relations} and Lemmas \ref{magmatic isomorphism}, \ref{relation between J and J'}, we get that:
\begin{align*}
&\mop{Ker}\big(\Phi \circ q'\big) = J' = J=\mop{Ker}q,\hbox{ and then } Ker\,\Phi = q'(J') = q'(J) = I,&
\end{align*}
therefore Proposition \ref{hom2} is proved.
\end{proof}
Note that the Lie product on $\Cal{L}(E)$ is the image of $\ast$ by $\Phi \circ q'$. The pre-Lie product $\rhd$ is the image of "$\cdot$" by $\Phi \circ q'$. Hence, we recover Proposition \ref{rhd is pre-lie} this way. 
\begin{exam}\label{details}
The free magma $M(E)$ can also be identified with the set of all planar binary rooted trees, with leaves decorated by the elements of $E$, together with the product $\vee$ \label{V1}defined in section \ref{section-one}. For instance,

\begin{equation}
a \cdot b = \treeLab,\,\, (a \cdot b) \cdot c = \treeLaba, \,\, a \cdot (b \cdot c) = \treeLabb, \,\,\hbox{ and } z = x \cdot y = \big((a \cdot b) \cdot c\big) \cdot ( d \cdot e ) = \treeLabc\,\,, 
\end{equation}
with $x:= (a \cdot b) \cdot c$, and $y:= d \cdot e$. Then:
\begin{align*}
f(z) &= f(x \cdot y)&\\
&= |x| f(x) f(y)&\\
&= |x| \big(|a \cdot b| f(a \cdot b)f(c)\big)\,\big(|d| f(d)f(e)\big)&\\
&= |x| \big( (|a| + |b|)\,\big(|d| \big(|a| f(a) f(b) f(c) f(d) f(e) \big)\big) \big)&\\
&= |a|\,|d|\,(|a| + |b|)\,(|a| + |b| + |c|)\;\;\;\;\;\big(\hbox{ since, } f(a) f(b) f(c) f(d) f(e) = 1\big).&   
\end{align*}
There is another description of $f$, detailed as follows: in a planar binary tree, there are two types of edges, going on the left (from bottom to top) or going on the right. Consequently, except the root, there are two types of vertices, the left ones (the incoming edge on the left) and the right ones. Let $t$ be a planar binary tree, with leaves decorated by elements of $E$, then $f(t)$ is the product over all left vertices $v$ of the sums of the degree of the decorations of the leaves $l$ with a path from $v$ to $l$.  
\end{exam}

\noindent Consequently, from Propositions \ref{rhd is pre-lie}, \ref{quotient} and \ref{hom2}, we get the following result.
\begin{cor}
There is a unique pre-Lie (respectively Lie) isomorphism between $\Cal{L}'(E)$ and $\Cal{L}(E)$, such that $\Phi(a\,mod.J') = a\,mod.J$, for any $a \in E$\label{E28}. 
\end{cor}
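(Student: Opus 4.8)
The plan is to obtain the isomorphism as the map that $\Phi$ induces on the quotient, so that the corollary becomes a formal consequence of the first isomorphism theorem together with Proposition \ref{hom2}. The genuine content has already been absorbed into Proposition \ref{hom2}: its conclusion $I = \mop{Ker}\Phi$ is exactly what turns the surjection $\Phi$ into an isomorphism on passing to $\Cal{T}^E/I$.

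First I would recall that $\Phi\colon \big(\Cal{T}^E, \to\big)\to \big(\Cal{L}(E), \rhd\big)$ is, by its defining formulas \eqref{hom}--\eqref{homom}, a \emph{surjective} homomorphism of pre-Lie algebras. By Proposition \ref{hom2} its kernel is precisely the ideal $I$. Hence $\Phi$ factors through the canonical projection $\Cal{T}^E \to \Cal{T}^E/I = \Cal{L}'(E)$, yielding a pre-Lie homomorphism $\overline{\Phi}\colon \Cal{L}'(E) \to \Cal{L}(E)$. Surjectivity of $\Phi$ gives surjectivity of $\overline{\Phi}$, and the equality $\mop{Ker}\Phi = I$ gives injectivity; thus $\overline{\Phi}$ is a pre-Lie isomorphism. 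Under the identifications $\Cal{L}'(E) = \big(\Cal{M}_E,\cdot\big)/J'$ and $\Cal{L}(E) = \big(\Cal{M}_E,\ast\big)/J$ furnished by Lemmas \ref{magmatic isomorphism} and \ref{relation between J and J'}, the normalization $\Phi(\racineLabaa) = a$ reads $\overline{\Phi}(a\bmod J') = a\bmod J$ for every $a \in E$, as required.

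Next I would upgrade this to a Lie isomorphism at no extra cost: on both $\Cal{L}'(E)$ and $\Cal{L}(E)$ the Lie bracket is the commutator of the respective pre-Lie products (compare \eqref{lie bracket} and \eqref{nbr}), so any pre-Lie homomorphism automatically intertwines the brackets. Hence $\overline{\Phi}$ is simultaneously a pre-Lie and a Lie isomorphism. For uniqueness I would invoke that $\Cal{T}^E$ is the free pre-Lie algebra on $E$ (Theorem \ref{d-generators}), so it is generated as a pre-Lie algebra by the single-vertex trees $\racineLabaa$, $a\in E$; consequently $\Cal{L}'(E)$ is generated by the classes $a\bmod J'$. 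A pre-Lie homomorphism out of $\Cal{L}'(E)$ is therefore determined by its values on these generators, so the constraint $\Phi(a\bmod J') = a\bmod J$ forces any such map to coincide with $\overline{\Phi}$. I do not expect any obstacle here: once Proposition \ref{hom2} is available the statement is purely formal, the entire difficulty having been concentrated in establishing $I = \mop{Ker}\Phi$.
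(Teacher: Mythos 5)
Your proposal is correct in substance and follows the route the paper itself intends: the paper offers no separate argument for this corollary, stating it as an immediate consequence of Propositions \ref{rhd is pre-lie}, \ref{quotient} and \ref{hom2}, which is precisely your factorization of the surjection $\Phi$ through $\Cal{T}^E/I=\Cal{L}'(E)$ by the first isomorphism theorem, with uniqueness coming from the fact that the classes of the generators $a\in E$ generate $\Cal{L}'(E)$ as a pre-Lie algebra.

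One justification, however, needs repair. It is not true that on both sides the Lie bracket is the commutator of the pre-Lie product. On $\Cal{L}(E)$ the free Lie bracket satisfies $[x,y]=|x|\,(x\rhd y)$ for homogeneous $x$, whereas the commutator of $\rhd$ is $[x,y]_{\rhd}=\frac{|x|+|y|}{|x|\,|y|}[x,y]$ by \eqref{intermid}; likewise on $\Cal{L}'(E)$ the bracket \eqref{lie bracket} is $|s|\,\overline{s\to t}$, whereas the commutator of $\rhd^{*}$ equals $\frac{|s|+|t|}{|t|}\,\overline{s\to t}$ by the weighted anti-symmetry relation \eqref{def of I}. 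So a pre-Lie homomorphism automatically intertwines the commutators, but not, in general, these brackets, which are degree-dependent rescalings of the commutators. The gap closes easily: $\overline{\Phi}$ is graded (since $|t_i|=|x_i|$, as noted after \eqref{homom}), and on both sides the Lie bracket is obtained from the pre-Lie product by the \emph{same} formula $[x,y]=|x|\,(x\rhd y)$ on homogeneous elements (equivalently, the same rescaling $\frac{|x|\,|y|}{|x|+|y|}$ of the commutator); hence the graded pre-Lie isomorphism $\overline{\Phi}$ does intertwine the two brackets. With that one-line amendment your argument is complete and matches the paper.
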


%%%%%%
\section{A monomial well-order on the planar rooted trees, and applications}\label{mwo}
%%%%%%
Let $E$ be a disjoint union $E:=\bigsqcup\limits_{n \geq 1} E_n$ of finite subsets $E_n=\{ a^n_1, \ldots, a^n_{d_n}\}$, where $E_n$ is the subset of all elements of $E$ of degree $n$. Let us order the elements of $E$ by:
\begin{equation}\label{order of generators}
a^1_1 < \cdots < a^1_{d_1} < a^2_1 < \cdots < a^2_{d_2} < \cdots < a^i_1 < \cdots < a^i_{d_i} < \cdots  
\end{equation}
Some particular sets $E$ of generators can be considered:
\begin{enumerate}
\item $E=\bigsqcup\limits_{n \geq 1}E_n,\hbox{ where } \#E_i=0 \hbox{ or }1$. A particular situation is:
\begin{enumerate}
	\item\label{E finite} take $E=\{a_1, \ldots, a_s\}, \hbox{ with } a_i \in E_i, \hbox{ and } |a_i|=i, \hbox{ for } i=1, \ldots, s.$
\end{enumerate} 
	\item $E=E_1,\hbox{ where } \#E_1=d_1=d, \hbox{ as a special case:}$
	\begin{enumerate}
	\item\label{d2} take $d_1=d=2.$
\end{enumerate}
\end{enumerate}

 The set $T^{E}_{pl}$\label{E29} forms the free magma generated by the set $\{\racineLabaa : \hbox{ for } a \in E\}$, under the left Butcher product $\lbutcher$. Define a total order $\preceq$ on $T^{E}_{pl}$ as follows: 

\begin{equation}\label{order}
\hbox{ for any $\sigma, \tau \in T^{E}_{pl}$, then} \,\sigma \preceq \tau \hbox{ if and only if } 
\end{equation}
\begin{enumerate}
\item $|\sigma| < |\tau|, \hbox{ or }$:
\item $|\sigma| = |\tau| \hbox{ and } b(\sigma) <  b(\tau),$ or:
\item $|\sigma| = |\tau|, b(\sigma) =  b(\tau)$ and $(\sigma_1, \ldots, \sigma_k) \preceq (\tau_1, \ldots, \tau_k)$ lexicographically, where $\sigma= B_{+, r}(\sigma_1 \ldots \sigma_k),$ $ \tau=B_{+, r'}(\tau_1 \ldots \tau_k)$ , or:
\item  $|\sigma| = |\tau|, b(\sigma) =  b(\tau)$, $(\sigma_1, \ldots, \sigma_k) = (\tau_1, \ldots, \tau_k)$ and the root $r$ of $\sigma$ is strictly  smaller than the root $r'$ of $\tau$.
\end{enumerate} 
where $k=b(\sigma)$ is the number of branches of $\sigma$ starting from the root. This order depends on an ordering of the generators, here we order them by:
\begin{equation}
\racineLabell \prec \cdots \prec \racineLabeldl \prec \cdots \prec \racineLabeli \prec \cdots \prec \racineLabeidi \prec \cdots
\end{equation}
 like in \eqref{order of generators}. The first terms in $T^{E}_{pl}$, when $E=\{a^1, a^2\}$, are ordered by  $"\prec"$ as follows:
\begin{align*}
&\racineLab1\prec\racineLabb\prec\arbreaLaba\prec\arbreaLabb\prec\arbreaLabc\prec\arbrebaLaba\prec\arbrebbLaba\prec\arbreaLabd\prec\arbrebaLabb\prec\arbrebaLabc\prec\arbrebaLabd\prec\arbrebbLabb\prec\arbrebbLabc\prec\arbrebbLabd\prec\arbrebaLabe\prec\arbrebaLabf\prec\arbrebaLabg\prec\arbrebbLabae\prec\arbrebbLabf\prec&\\
&\;\;\;\;\arbrebbLabg\prec\arbrebaLabh\prec\arbrebbLabh\prec\cdots,&
\end{align*}
 where $\racineLabi$ is a shorthand notation for $\racineLabei$.
\begin{prop}\label{well order}
The order $\preceq$ defined in \eqref{order} is a monomial well-order.
\end{prop}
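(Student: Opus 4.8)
The plan is to verify in turn the three required properties: that $\preceq$ is a total order, that it is compatible with the magma product $\lbutcher$, and that it is a well-order. Since the order is defined recursively (clause~(3) compares the branch tuples $(\sigma_1,\dots,\sigma_k)$ and $(\tau_1,\dots,\tau_k)$ by the same relation $\preceq$ on strictly smaller trees), every argument will proceed by induction on the degree $|\cdot|$. The whole proof rests on four elementary structural observations about the left Butcher product, which I would record first: writing $\tau=B_{+,\,a}(\tau_1\cdots\tau_m)$, we have (i) $|\sigma\lbutcher\tau|=|\sigma|+|\tau|$; (ii) $b(\sigma\lbutcher\tau)=b(\tau)+1$, so the branch number of a product depends only on its \emph{right} factor; (iii) the root of $\sigma\lbutcher\tau$ is the root $a$ of $\tau$; and (iv) the ordered branch list of $\sigma\lbutcher\tau$ is $(\sigma,\tau_1,\dots,\tau_m)$, i.e. $\sigma$ prepended as the leftmost branch to the branch list of $\tau$. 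All four are immediate from \eqref{butcher product}.

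\textbf{Total order and well-order.} First I would prove totality by induction on degree. Two trees of distinct degree are separated by clause~(1); among trees of equal degree, those with distinct branch numbers are separated by clause~(2); and for equal degree and branch number the branches have strictly smaller degree, so by the induction hypothesis $\preceq$ is a total order on them and the induced lexicographic order on the (equal-length) branch tuples is total, leaving clause~(4) to compare roots. Since a planar decorated tree is uniquely determined by its root decoration together with its ordered list of branch subtrees, exhaustion of clauses~(1)--(4) forces equality, giving antisymmetry and totality; transitivity follows the same inductive template. For the well-order property I would argue that no strictly decreasing sequence is infinite: if $\tau\prec\sigma$ then clause~(1) gives $|\tau|\le|\sigma|$, so along a strictly decreasing chain the degrees are non-increasing and, being positive integers, eventually constant, say equal to $d$. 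Because each $E_n$ is finite and a tree of degree $d$ has at most $d$ vertices, there are only finitely many trees of degree $d$; a strictly decreasing chain inside a finite set must terminate, a contradiction.

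\textbf{Compatibility with $\lbutcher$.} This is the heart of the matter, and I expect it to be the main obstacle. I would check the two inequalities of Definition~\ref{monomial order} separately. For right multiplication, assume $\sigma\prec\tau$ and compare $\sigma\lbutcher\rho$ with $\tau\lbutcher\rho$: by (ii)--(iv) these share root, branch number $b(\rho)+1$, and their branch lists are $(\sigma,\rho_1,\dots,\rho_m)$ and $(\tau,\rho_1,\dots,\rho_m)$. If $|\sigma|<|\tau|$ clause~(1) settles it; otherwise the degrees and branch numbers agree and the lexicographic comparison of clause~(3) reads off the strict inequality $\sigma\prec\tau$ in the very first slot, so $\sigma\lbutcher\rho\prec\tau\lbutcher\rho$. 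For left multiplication, assume $\sigma\prec\tau$ and compare $\rho\lbutcher\sigma$ with $\rho\lbutcher\tau$; here one must trace \emph{which} clause produced $\sigma\prec\tau$ and verify it is preserved: a degree gap persists by (i) and clause~(1); a branch-number gap persists by (ii) and clause~(2); if $\sigma$ and $\tau$ agree in degree and branch number and differ lexicographically in their branches, then the branch lists $(\rho,\sigma_1,\dots,\sigma_k)$ and $(\rho,\tau_1,\dots,\tau_k)$ agree in the first slot and the same lexicographic inequality reappears in the tail by clause~(3); and finally if $\sigma$ and $\tau$ have identical branch tuples and differ only in their roots, then by (iii) the products have roots $r(\sigma)$ and $r(\tau)$ and identical branch lists, so clause~(4) preserves the inequality. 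The delicate points are keeping the recursion well-founded (every lexicographic comparison invoked concerns strictly smaller trees) and ensuring the four clauses partition the left-multiplication analysis exhaustively and consistently; once the four structural facts about $\lbutcher$ are in hand this reduces to the bookkeeping above, completing the proof that $\preceq$ is a monomial well-order.
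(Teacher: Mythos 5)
Your proof is correct and takes essentially the same approach as the paper: the same clause-by-clause check that multiplying by a fixed tree on either side preserves each of the four comparison criteria of \eqref{order}, using the facts that $\lbutcher$ adds degrees, increments the branch number of the right factor, prepends the left factor to the branch list, and keeps the right factor's root (the paper details only one side and dismisses the other with ``by the same way''). Your closing finiteness argument for the well-order property (degrees non-increasing along a strictly decreasing chain, and only finitely many decorated trees of a given degree since each $E_n$ is finite) supplies the step the paper waves off as ``obvious.''
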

\begin{proof}
Let $\sigma, \sigma' \in T^ E_{pl}$, such that $\sigma \preceq \sigma'$. For any $\tau \in T^ E_{pl}$, we have:
$|\tau \lbutcher \sigma| < |\tau \lbutcher \sigma'|$\label{lb3},  if $|\sigma| < |\sigma'|$, and they are equal when the degrees of  $\sigma $ and  $\sigma'$ are equal. If  $b(\sigma) < b(\sigma'),$ then $b(\tau \lbutcher \sigma) < b(\tau \lbutcher \sigma'). $ But, if  $b(\sigma) = b(\sigma') = k , $ then  $b(\tau \lbutcher \sigma) = b(\tau \lbutcher \sigma') = k+1$. Lexicographically,  $(\tau, \sigma_1, \ldots, \sigma_k) \preceq (\tau, \sigma'_1, \ldots, \sigma'_k) $ when  $(\sigma_1, \ldots, \sigma_k) \preceq (\sigma'_1, \ldots, \sigma'_k)$. The root of  $\tau \lbutcher \sigma$ is the root of  $\sigma$, the same thing for $\tau \lbutcher \sigma'$ holds. Then $\tau \lbutcher \sigma \preceq \tau \lbutcher \sigma'$. By the same way, one can verify that $\sigma \lbutcher \tau \preceq \sigma' \lbutcher \tau$. Hence, the order $\preceq$ is a monomial. Obviously, this order is a well-order.
\end{proof}

In following, we adapt the algorithm of T. Mora \cite{T.M} to find Gr\"obner bases for the free Lie algebras in tree version. For any element $f \in \Cal{T}^{E}_{pl}$, define $T(f)$\label{Tf1} to be the maximal term of $f$ with respect to the order $\preceq$ defined in \eqref{order}, and let $lc(f)$ be the coefficient of $T(f) \hbox{ in } f$, for example:\\
$$\hbox{ if } f=\arbrebaLaba+ \arbrebbLabb+2\arbrebbLabc, \hbox{ then } T(f)=\arbrebbLabc, \hbox{ and }lc(f)=2.$$ 

Let $I$ be any (two-sided) ideal of $\Cal{T}^{E}_{pl}$. Define:
\begin{equation}
T(I):=\big\{T(f) \in T^{E}_{pl}: f \in I\big\}\,, \; O(I):= T^{E}_{pl}\backslash T(I)
\end{equation}
 to be subsets of the magma $T^{E}_{pl}$, where $T(I)$\label{Tf2} forms a (two-sided) ideal of $T^{E}_{pl}$.

\begin{thm}\label{span}
If $I$ is a (two-sided) ideal of $\Cal{T}^{E}_{pl}$, then:
\begin{enumerate}
\item\label{imp} $\Cal{T}^{E}_{pl}=I\oplus S\!pan_K(O(I))$.
\item\label{isomorphic}$\Cal{T}^{E,\,*}_{pl}:= \Cal{T}^{E}_{pl} / I$ is isomorphic, as a $K$\label{not7-K}-vector space, to $S\!pan_K(O(I))$.
\item For each $f\in \Cal{T}^{E}_{pl}$ there is a unique $g:=Can(f,I) \in S\!pan_K(O(I))$, such that $f-g\in I$. Moreover:
\begin{enumerate}
\item $ Can(f,I)=Can(g,I) \hbox{ if and only if } f-g\in I$.
\item $ Can(f,I)=0 \hbox{ if and only if } f \in I$.
\end{enumerate}
\end{enumerate}
The symbol $Can(f,I)$, which satisfies the identities above, is called the canonical form of $f$ in $S\!pan_K(O(I))$.
\end{thm}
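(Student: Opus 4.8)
The statement is the tree-theoretic avatar of Mora's decomposition theorem \cite{T.M}, and the plan is to establish the three items in the order (1), then (2), then (3), with (1) being the substantial one. The only structural inputs needed are that $\preceq$ is a monomial well-order (Proposition~\ref{well order}) and the tautological partition $T^{E}_{pl} = T(I) \sqcup O(I)$ into two disjoint subsets.

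First I would prove that the sum in (1) is direct, i.e. that $I \cap S\!pan_K(O(I)) = \{0\}$. Suppose $f$ lies in this intersection and $f \neq 0$. On the one hand $f \in I$ forces $T(f) \in T(I)$ by the very definition of $T(I)$. On the other hand every monomial occurring in $f$ belongs to $O(I)$, and in particular its maximal monomial does, so $T(f) \in O(I)$. Since $T(I)$ and $O(I)$ are disjoint this is a contradiction, whence $f = 0$.

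The core step is to show that $I + S\!pan_K(O(I)) = \Cal{T}^{E}_{pl}$, and this is where the well-order does the work. I would argue by Noetherian induction on the maximal term $T(f)$, which is legitimate precisely because $\preceq$ is a well-ordering. Given $f \neq 0$, write $T(f) = lc(f)\, x_0$. If $x_0 \in O(I)$, then $f - lc(f)\, x_0$ has a strictly smaller maximal term and lies in $I + S\!pan_K(O(I))$ by the inductive hypothesis, hence so does $f$. If instead $x_0 \in T(I)$, choose $h \in I$ with $T(h) = \mu\, x_0$ for some $\mu \neq 0$; then $f' := f - \tfrac{lc(f)}{\mu}\, h$ cancels the leading monomial, so $T(f') \prec T(f)$, and $f' \in I + S\!pan_K(O(I))$ by induction, giving $f \in I + S\!pan_K(O(I))$ since $h \in I$. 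This reduction is nothing but the division algorithm, and the main obstacle is exactly its termination: one must be certain that the term-by-term cancellation does not merely push the problem onto ever new monomials. This is resolved by the well-ordering clause of Proposition~\ref{well order}, which forbids infinite strictly decreasing sequences of trees and thereby legitimizes the induction. Combining this with the previous paragraph yields (1).

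Item (2) is then immediate: the canonical projection $\Cal{T}^{E}_{pl} \twoheadrightarrow \Cal{T}^{E}_{pl}/I$ restricts to a $K$-linear map $S\!pan_K(O(I)) \to \Cal{T}^{E,\,*}_{pl}$ whose kernel is $I \cap S\!pan_K(O(I)) = \{0\}$ by directness and which is onto because $I + S\!pan_K(O(I)) = \Cal{T}^{E}_{pl}$, so it is an isomorphism $S\!pan_K(O(I)) \cong \Cal{T}^{E,\,*}_{pl}$. For (3), the decomposition (1) furnishes, for each $f$, a unique $g \in S\!pan_K(O(I))$ with $f - g \in I$; I would set $Can(f,I) := g$. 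Uniqueness is again directness (two candidates differ by an element of $I \cap S\!pan_K(O(I))$), and this makes $f \mapsto Can(f,I)$ the linear projection onto $S\!pan_K(O(I))$ along $I$. Property (b), that $Can(f,I) = 0$ iff $f \in I$, is read off directly from $f - Can(f,I) \in I$ together with uniqueness, and property (a) follows by applying (b) to $f - g$ and using the linearity relation $Can(f,I) - Can(g,I) = Can(f-g,I)$.
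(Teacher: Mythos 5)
Your proof is correct and follows essentially the same route as the paper: your Noetherian induction on the leading term is precisely the paper's division algorithm (at each step either move the maximal monomial into $S\!pan_K(O(I))$ or cancel it against an element of $I$ with the same leading monomial), with termination justified exactly as in the paper by the well-ordering property of Proposition~\ref{well order}. The only difference is presentational: you spell out the directness of the sum and the deduction of items (2) and (3) from item (1), steps which the paper delegates to Mora's Theorem 1.1 rather than writing out.
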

\begin{proof}
The proof is detailed in \cite[Theorem 1.1]{T.M} in the associative case. The procedure followed in the proof of \eqref{imp} consists in the following algorithm:
\small{
\begin{align*}
&f_0:=f, \phi_0:=0, h_0:=0, i:=0&\\
&\hbox{ while $f_i\neq 0$ do }&\\
&\,\,\,\,\hbox{ If $T(f_i) \,\slash\!\!\!\!\!\in\!T(I)$ then }&\\
&\,\,\,\,\,\phi_{i+1}:= \phi_i, h_{i+1}:=h_i + lc(f_i)T(f_i), f_{i+1}:= f_i - lc(f_i)T(f_i)&\\
&\,\,\,\,\hbox{ else $\%T(f_i) \in T(I) \%$ }&\\
&\,\,\,\,\,\hbox{ choose $g_i \in I$, such that $T(g_i)= T(f_i), lc(g_i)=1$ }&\\
&\,\,\,\,\,\phi_{i+1}:= \phi_i+ lc(f_i)g_i, h_{i+1}:=h_i , f_{i+1}:= f_i - lc(f_i)g_i&\\
&i:=i+1&\\
&\phi:=\phi_i, h:=h_i.&
\end{align*}
}
The correctness of this algorithm is based on the following observations:
$\forall i: \phi_i \in I, h_i \in S\!pan_K(O(I)), f_i+\phi_i+ h_i = f.$ 
Termination is guaranteed by the easy observation that if $f_n \neq 0$ then $T(f_n) <\preceq T(f_{n-1})$ and by the fact that $\preceq$ is a well-ordering.  
\end{proof}

Let $J'$ be the two-sided ideal of $\Cal{T}^{E}_{pl}$\label{cp1} generated by the pre-Lie identity and all elements on the form:
\begin{equation}\label{ideal}
 |\sigma| \sigma \lbutcher \tau + |\tau| \tau \lbutcher \sigma,\;\hbox{ for any (non-empty) trees } \sigma, \tau \in T^{E}_{pl}.  
\end{equation}

\begin{exam}
In this example we calculate $Can(f,J')$, where $f=\arbreaLabe+\arbreaLabf+\arbreaLabc+\arbrebbLabc$\label{Tf3} and $J'$ is the ideal defined by \eqref{ideal}, using the algorithm described in the proof of Theorem \ref{span} above\label{lb4}:
\begin{align*}
&f_0=\arbreaLabe+\arbreaLabf+\arbreaLabc+\arbrebbLabc, \phi_0=0, h_0=0&\\
&T(f_0)= \arbrebbLabc \in T(J'), \hbox{ choose } g_0=3 \arbrebaLabd + \arbrebbLabc \in J',\,\,lc(g_0)=1&\\
&\phi_1=3 \arbrebaLabd + \arbrebbLabc,\,\,h_1= 0, f_1=\arbreaLabe+\arbreaLabf+\arbreaLabc-3\arbrebaLabd&\\
&T(f_1)= \arbrebaLabd \in T(J'), \hbox{ choose } g_1=\frac{1}{2}\big(\arbrebaLabc+ 2\arbrebaLabd\big) = \frac{1}{2}\big(\arbreaLabb+ 2\arbreaLabc\big)\lbutcher \racineLaba \in J',\,\,lc(g_1)=1&\\
&\phi_2=3 \arbrebaLabd + \arbrebbLabc-\frac{3}{2} \arbrebaLabc - 3\arbrebaLabd,\,\,h_2= 0, f_2=\arbreaLabe+\arbreaLabf+\arbreaLabc+\frac{3}{2}\arbrebaLabc&\\
&T(f_2)= \arbrebaLabc \,\,\slash\!\!\!\!\!\in T(J'), \hbox{ then: }&\\
&\phi_3=3 \arbrebaLabd + \arbrebbLabc-\frac{3}{2} \arbrebaLabc - 3\arbrebaLabd,\,\,h_3= \frac{3}{2}\arbrebaLabc, f_3=\arbreaLabe+\arbreaLabf+\arbreaLabc&\\
&T(f_3) = \arbreaLabf \in T(J'), \hbox{ choose } g_3=\frac{1}{3}(\arbreaLabe+3\arbreaLabf) \in J'&\\
&\phi_4=3 \arbrebaLabd + \arbrebbLabc-\frac{3}{2} \arbrebaLabc - 3\arbrebaLabd + \frac{1}{3}\arbreaLabe+\arbreaLabf,\,\,h_4= \frac{3}{2}\arbrebaLabc, f_4=\frac{2}{3}\arbreaLabe+\arbreaLabc&\\
&T(f_4)= \arbreaLabe \,\,\slash\!\!\!\!\!\in T(J'), \hbox{ then: }&\\
&\phi_5=3 \arbrebaLabd + \arbrebbLabc-\frac{3}{2} \arbrebaLabc - 3\arbrebaLabd + \frac{1}{3}\arbreaLabe+\arbreaLabf,\,\,h_5= \frac{3}{2}\arbrebaLabc+\frac{2}{3}\arbreaLabe, f_5=\arbreaLabc&\\
&T(f_5) = \arbreaLabc \in T(J'), \hbox{ choose } g_5=\frac{1}{2}(\arbreaLabb+2\arbreaLabc) \in J'&\\
&\phi_6=3 \arbrebaLabd + \arbrebbLabc-\frac{3}{2} \arbrebaLabc - 3\arbrebaLabd + \frac{1}{3}\arbreaLabe+\arbreaLabf+\frac{1}{2}\arbreaLabb+\arbreaLabc,\,\,h_6= \frac{3}{2}\arbrebaLabc+\frac{2}{3}\arbreaLabe, f_6=-\frac{1}{2}\arbreaLabb&
\end{align*}
\begin{align*}
&T(f_6)= \arbreaLabb \,\,\slash\!\!\!\!\!\in T(J'), \hbox{ then: }&\\
&\phi_7=3 \arbrebaLabd + \arbrebbLabc-\frac{3}{2} \arbrebaLabc - 3\arbrebaLabd + \frac{1}{3}\arbreaLabe+\arbreaLabf+\frac{1}{2}\arbreaLabb+\arbreaLabc,\,\,h_7= \frac{3}{2}\arbrebaLabc+\frac{2}{3}\arbreaLabe -\frac{1}{2}\arbreaLabb, f_7=0,&
\end{align*}
\noindent then we obtain that $Can(f,J')= \frac{3}{2}\arbrebaLabc+\frac{2}{3}\arbreaLabe -\frac{1}{2}\arbreaLabb$\,.\\

One can note that choosing different $g$'s at each step in the procedures above while changing the intermediate computations would not change the final result. 
\end{exam}

Theorem \ref{span} does not describe the contents of each of $T(I)$\label{Tf4} and $O(I)$. We try here to get a description of them, using the magma of planar rooted trees $T^{E}_{pl}$ with its $K$\label{not8-K}-linear span $\Cal{T}^{E}_{pl}$. Let $J$ be the (two-sided) ideal of $\Cal{T}^{E}_{pl}$\label{E30} generated by the pre-Lie identity with respect to the magmatic product $\lbutcher$. By Theorem \ref{span}, we have:
\begin{equation}
\Cal{T}^{E}_{pl} = J \oplus S\!pan_K(O(J)).
\end{equation} 

\begin{prop}\label{princ}
$O(J)$ is the set of $\sigma \in T^E_{pl}$ such that for any $v \in V(\sigma)$, the branches starting from $v$ are displayed in nondecreasing order from left to right.
\end{prop}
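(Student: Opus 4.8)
The plan is to show that the complement $T(J)=T^E_{pl}\setminus O(J)$ is exactly the set of planar trees possessing a \emph{descent}, i.e. a vertex $v$ two of whose consecutive branches $\beta_i\succ\beta_{i+1}$ appear out of order, and then to pin down $O(J)$ by a cardinality count degree by degree. First I would instantiate the pre-Lie generator at $x,y\in T^E_{pl}$ with $x\succ y$ and $z=B_{+,a}(z_1\cdots z_m)$. Expanding $\lbutcher$ through \eqref{butcher product}, the generator $(x\lbutcher y)\lbutcher z - x\lbutcher(y\lbutcher z) - (y\lbutcher x)\lbutcher z + y\lbutcher(x\lbutcher z)$ becomes
$$B_{+,a}(x\,y\,z_1\cdots z_m) - B_{+,a}(y\,x\,z_1\cdots z_m) - B_{+,a}\big((x\lbutcher y)\,z_1\cdots z_m\big)+B_{+,a}\big((y\lbutcher x)\,z_1\cdots z_m\big).$$
All four trees have degree $|x|+|y|+|z|$; the first two carry $m+2$ root branches while the last two carry only $m+1$, so by clause (2) of \eqref{order} the first two dominate, and clause (3) (lexicographic comparison of the branch tuples) singles out $B_{+,a}(x\,y\,z_1\cdots z_m)$ since $x\succ y$. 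Thus every \emph{basic} descent tree — one whose two leftmost root-branches form a descent — is a leading term and lies in $T(J)$.

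Next I would propagate this to descents at arbitrary positions and vertices, using that $T(J)$ is a magmatic ideal of $T^E_{pl}$ (so it is closed under $\lbutcher$ on both sides). For a subtree $s=B_{+,b}(\beta_1\cdots\beta_p)$ whose leftmost descent sits at position $(i,i+1)$, I argue by induction on $i$: the case $i=1$ is a basic descent tree, already in $T(J)$; for $i>1$ I write $s=\beta_1\lbutcher B_{+,b}(\beta_2\cdots\beta_p)$, where the inner tree has its descent at position $i-1$ and hence lies in $T(J)$ by induction, so ideal-closure gives $s\in T(J)$. Finally, an arbitrary descent tree $t$ has a descent subtree $s_v$ rooted at some vertex $v$; repeatedly de-grafting the leftmost branch (the decomposition $B_{+,a}(\gamma_1\cdots\gamma_n)=\gamma_1\lbutcher B_{+,a}(\gamma_2\cdots\gamma_n)$ that exhibits $(T^E_{pl},\lbutcher)$ as a free magma) writes $t=C[s_v]$ for a magmatic context $C$, so $t\in T(J)$ once more by ideal-closure. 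This yields $T^E_{pl}\setminus O\subseteq T(J)$, that is, $O(J)\subseteq O$.

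To close the remaining gap I would count. The pre-Lie relations are homogeneous and $\preceq$ refines the degree, so $J$ is graded and $O(J)$ restricts to a basis of each graded piece $(\Cal{T}^E_{pl}/J)_n$. By Theorem \ref{d-generators} together with Proposition \ref{hom2}, $\Cal{T}^E_{pl}/J$ is the free pre-Lie algebra $\Cal{T}^E$, whence $\#\,O(J)_n=\dim(\Cal{T}^E)_n=\#\,T^E_n$, the number of non-planar $E$-decorated trees of degree $n$. On the other hand, forgetting planarity restricts to a bijection from $O$ onto $T^E$: each non-planar tree has exactly one planar representative with every branch-list sorted nondecreasingly, the uniqueness resting on $\preceq$ being a total order (Proposition \ref{well order}). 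Hence $\#\,O_n=\#\,T^E_n=\#\,O(J)_n$, and combined with $O(J)\subseteq O$ and finiteness in each degree this forces $O(J)=O$.

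I expect the propagation step to be the main obstacle: the pre-Lie generator only rewrites the two \emph{leftmost} branches at a root, so the real work is reducing a descent at an arbitrary position and an arbitrary internal vertex to that special configuration, which is precisely where the magmatic-ideal closure of $T(J)$ and the left-branch de-grafting of the free magma $(T^E_{pl},\lbutcher)$ carry the argument. The leading-term computation itself is robust — independence from the chosen representative generator follows automatically from the homogeneity of $\preceq$ — and the final equality is then a clean dimension match rather than a full confluence (Buchberger-style) verification.
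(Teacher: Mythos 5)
Your proposal is correct and takes essentially the same route as the paper: its Lemma \ref{non-increasing} likewise exhibits every tree with a descent as the leading term of a pre-Lie element (the four-term element \eqref{elementf}, transported to descents at arbitrary vertices by the two-sided ideal structure), and its proof of Proposition \ref{princ} concludes with exactly your cardinality count, matching $O(J)_n$ against the non-planar trees $T^E_n$ via the sorted-representative bijection. Your induction on the descent position and the explicit magmatic-context/ideal-closure formalism for $T(J)$ are a more detailed write-up of steps the paper treats as immediate, not a different argument.
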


\noindent The following lemma will help us to prove Proposition \ref{princ}:

\begin{lem}\label{non-increasing}
Any tree $\sigma$ in $T^E_{pl}$ which does not verify the condition of Proposition \ref{princ} belongs to $T(J)$.
\end{lem}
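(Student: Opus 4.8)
The plan is to argue by induction on the degree $|\sigma|$, the engine being a leading-term computation for one instance of the defining pre-Lie relation. Recall that any $\sigma\in T^E_{pl}$ with root $r$ and branches $\sigma_1,\ldots,\sigma_k$ (read from left to right) admits the left-comb decomposition $\sigma=\sigma_1\lbutcher(\sigma_2\lbutcher(\cdots\lbutcher(\sigma_k\lbutcher\racineLabr)\cdots))$, since $\lbutcher$ grafts a tree as the leftmost branch at the root; here $\racineLabr$ is the one-vertex tree decorated by $r$. If $\sigma$ fails the condition of Proposition \ref{princ}, there is a vertex $v$ at which two consecutive branches are out of order, and I distinguish whether $v$ is the root of $\sigma$ or lies strictly below it.

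Suppose first that the violation occurs at the root: among $\sigma_1,\ldots,\sigma_k$ there is an index $i$ with $\sigma_i\succ\sigma_{i+1}$. Set $w:=\sigma_{i+2}\lbutcher(\cdots\lbutcher(\sigma_k\lbutcher\racineLabr)\cdots)$ (with $w=\racineLabr$ when $i+1=k$), so that $\sigma=\sigma_1\lbutcher(\cdots\lbutcher(\sigma_{i-1}\lbutcher(\sigma_i\lbutcher(\sigma_{i+1}\lbutcher w)))\cdots)$, and form the instance of the defining pre-Lie relation at $(\sigma_{i+1},\sigma_i,w)$,
\[
R:=(\sigma_{i+1}\lbutcher\sigma_i)\lbutcher w-\sigma_{i+1}\lbutcher(\sigma_i\lbutcher w)-(\sigma_i\lbutcher\sigma_{i+1})\lbutcher w+\sigma_i\lbutcher(\sigma_{i+1}\lbutcher w)\in J.
\]
All four monomials have degree $|\sigma|$. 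The terms $(\sigma_{i+1}\lbutcher\sigma_i)\lbutcher w$ and $(\sigma_i\lbutcher\sigma_{i+1})\lbutcher w$ have $k-i$ branches at the root $r$, whereas $\sigma_{i+1}\lbutcher(\sigma_i\lbutcher w)$ and $\sigma_i\lbutcher(\sigma_{i+1}\lbutcher w)$ have $k-i+1$; so by the second clause of the definition \eqref{order} the latter two dominate. Between these two the degree, branch number and root decoration $r$ all agree, hence the third clause compares the branch sequences $(\sigma_{i+1},\sigma_i,\sigma_{i+2},\ldots)$ and $(\sigma_i,\sigma_{i+1},\sigma_{i+2},\ldots)$ lexicographically; since $\sigma_{i+1}\prec\sigma_i$ this yields $T(R)=\sigma_i\lbutcher(\sigma_{i+1}\lbutcher w)$. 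Left-multiplying successively by $\sigma_{i-1},\ldots,\sigma_1$ stays inside the two-sided ideal $J$, and because $\preceq$ is a monomial order (Proposition \ref{well order}) the leading term of the product is $\sigma_1\lbutcher(\cdots\lbutcher(\sigma_{i-1}\lbutcher T(R))\cdots)=\sigma$. Thus $\sigma\in T(J)$.

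In the remaining case the branches at the root are nondecreasing, so the violation lies inside some branch $\sigma_j$, which therefore itself fails the condition of Proposition \ref{princ}. As $|\sigma_j|<|\sigma|$, the induction hypothesis gives $h\in J$ with $T(h)=\sigma_j$; substituting $h$ for $\sigma_j$ in the left-comb decomposition, that is, forming $f:=\sigma_1\lbutcher(\cdots\lbutcher(\sigma_{j-1}\lbutcher(h\lbutcher(\sigma_{j+1}\lbutcher(\cdots\lbutcher(\sigma_k\lbutcher\racineLabr)\cdots))))\cdots)$, produces an element of $J$ (a magmatic product one of whose factors lies in $J$), and monomiality again gives $T(f)=\sigma$, whence $\sigma\in T(J)$. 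The one delicate point deserving care is the leading-term determination in the first case: one must verify that all four clauses of \eqref{order} conspire so that the monomial carrying the \emph{decreasing} pair $(\sigma_i,\sigma_{i+1})$ is the maximal term of $R$. Once this is secured, the reduction to a branch and the passage from a single relation to an arbitrary host tree are formal consequences of $J$ being two-sided and of $\preceq$ being monomial.
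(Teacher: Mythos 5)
Your proof is correct and essentially the paper's own argument: your relation $R$, left-multiplied by the prefix branches $\sigma_1,\ldots,\sigma_{i-1}$, is exactly the paper's four-tree combination \eqref{elementf}, with the same identification of the leading term via clauses (2) and (3) of the order \eqref{order}. The only cosmetic difference is that you treat a violation at a non-root vertex by induction on the degree, whereas the paper substitutes the combination \eqref{elementf} directly for the subtree at the offending vertex; both reductions rest on the same two facts, namely that $J$ is a two-sided ideal and that $\preceq$ is monomial.
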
 
\begin{proof}
Let $\sigma= B_{+,_,r}(\sigma_1 \cdots \sigma_k)$ be a tree in $T^E_{pl}$, with $k$ branches for $k \geq 2$ starting from the root, such that $\sigma_{i-1} \succ \sigma_i, \hbox{ for some } i= 1, \ldots, k-1$. We find that:

\begin{equation}\label{elementf}
f= \arbreLabgencasel \;\;\;\;- \;\;\;\;\arbreLabcaseji\;\;\;\;+ \;\;\;\; \arbreLabcaseij \;\;\;\; - \;\;\;\;\arbreLabgencasell\,
\end{equation}
is an element in $J$ such that $T(f)= \sigma$. If the branches start from a vertex $v$ different from the root, the subtree $\sigma_v$, obtained by taking $v$ as a root, is a factor of the tree $\sigma$. It is easily seen that $\sigma$ is the leading term of the element $f \in J$ obtained by replacing the factor $\sigma_v$ by the corresponding factor given by \eqref{elementf}.     
\end{proof}
\noindent As a consequence of Lemma \ref{non-increasing}, we get the following natural result.
\begin{cor}\label{interm}
$O(J)$ is contained in the set $\big\{ \sigma \in T^E_{pl} : \sigma \hbox{ has non decreasing branches} \big\}$.
\end{cor}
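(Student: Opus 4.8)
The plan is to deduce the corollary from Lemma \ref{non-increasing} by pure contraposition, since essentially all the real content has already been placed into that lemma. Recall that by definition $O(J)$ is the complement $T^E_{pl}\setminus T(J)$ of the set of leading terms of elements of $J$. Thus the claimed inclusion $O(J)\subseteq \big\{\sigma\in T^E_{pl}:\sigma\hbox{ has non decreasing branches}\big\}$ is equivalent to showing that every $\sigma\in T^E_{pl}$ which fails to have non-decreasing branches already lies in $T(J)$ — and that is exactly what Lemma \ref{non-increasing} asserts.

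First I would make explicit the elementary logical identification underlying the passage: the negation of the defining condition of Proposition \ref{princ} is precisely the failure of the non-decreasing property. Namely, a tree $\sigma$ does not have non-decreasing branches if and only if there exists some vertex $v\in V(\sigma)$ at which the branches are not displayed in nondecreasing order from left to right, which is verbatim the hypothesis of Lemma \ref{non-increasing}. No computation is needed at this step; it is just unwinding the quantifier over $V(\sigma)$.

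Then I would apply the lemma directly. For any $\sigma\in T^E_{pl}$ lying outside the set of non-decreasing-branch trees, Lemma \ref{non-increasing} produces an element $f\in J$ with $T(f)=\sigma$, so $\sigma\in T(J)$ and hence $\sigma\notin O(J)$. Reading this contrapositively, every $\sigma\in O(J)$ must have non-decreasing branches, which is exactly the asserted inclusion. This is the whole argument, and it requires nothing beyond the set-theoretic passage from $T(J)$ to its complement $O(J)=T^E_{pl}\setminus T(J)$.

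The main (and, for the corollary, essentially only) obstacle has already been absorbed into Lemma \ref{non-increasing}: the work there is to exhibit, for each offending $\sigma=B_{+,r}(\sigma_1\cdots\sigma_k)$ with some $\sigma_{i-1}\succ\sigma_i$, an explicit combination $f\in J$ built from the pre-Lie relation applied at the vertex where the order fails, whose leading term with respect to $\preceq$ is $\sigma$, and to handle the case where the bad vertex is not the root by grafting this local relation into the ambient tree as a factor. Once that lemma is granted, the corollary carries no further difficulty. I would also flag that this corollary gives only the inclusion $O(J)\subseteq\{\cdot\}$; the reverse inclusion, needed for the full equality in Proposition \ref{princ}, is a genuinely separate matter — one must verify that no element of $J$ can have a non-decreasing-branch tree as its leading term — and is not addressed by this contraposition.
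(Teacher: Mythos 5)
Your proof is correct and matches the paper's route exactly: the paper derives Corollary \ref{interm} as an immediate consequence of Lemma \ref{non-increasing}, i.e.\ by the same contraposition from $T(J)$ to its complement $O(J)=T^E_{pl}\setminus T(J)$ that you spell out. Your closing remark that the reverse inclusion is a genuinely separate matter is also consistent with the paper, which establishes it afterwards in the proof of Proposition \ref{princ} by a cardinality comparison with non-planar trees.
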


\begin{proof}[Proof of Proposition \ref{princ}]

Using the graduation of $T^E_{pl}$, with respect to the degree of trees therein, there is a one-to-one bijection between the subset $\big\{ \sigma \in T^E_{pl} : \sigma \hbox{ has non decreasing branches} \big\}_n$ and the the homogeneous component $T^E_n$ of all $E$-decorated (non-planar) rooted trees of degree $n$, i.e.: 
$$\# \big\{ \sigma \in T^E_{pl} : \sigma \hbox{ has non decreasing branches} \big\}_n = \# T^E_n, \hbox{ for all } n \geq 1.$$

But, $O(J)_n \tilde{=} T^E_n, \hbox{ for all } n \geq 1$, have the same cardinality, hence coincide according to Corollary \ref{interm}: 
\begin{equation}\label{intermi}
O(J) = \big\{ \sigma \in T^E_{pl} : \sigma \hbox{ has non decreasing branches} \big\}.
\end{equation}
This proves the Proposition \ref{princ}.   
\end{proof}

\noindent In the next Theorem, we describe the set $O(J')$ for the ideal $J'$ defined above by \eqref{ideal}.

\begin{thm}\label{main}
The set $O(J')$ is a set of ladders, or equivalently, the magmatic ideal $T(J')$\label{Tf5} contains all the trees which are not ladders.
\end{thm}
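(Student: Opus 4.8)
The plan is to prove the equivalent form of the statement: \emph{every planar tree that is not a ladder lies in $T(J')$}, by induction on the degree $n$. Two structural facts will drive the argument. First, since $J\subseteq J'$ we have $T(J)\subseteq T(J')$, so by Lemma \ref{non-increasing} any tree having a vertex whose branches are not arranged in nondecreasing order already lies in $T(J')$; hence I may assume that $\sigma$ has nondecreasing branches at every vertex. Second, by the remark preceding \eqref{realized2}, $T(J')$ is a two-sided ideal of the free magma $\big(T^E_{pl},\lbutcher\big)$ generated by the one-vertex trees, so if some proper $\lbutcher$-factor of $\sigma$ lies in $T(J')$, then so does $\sigma$. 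Since a tree fails to be a ladder exactly when some vertex has at least two children, i.e. when it has a $\lbutcher$-factor $\alpha\lbutcher\beta$ with $|\beta|\ge 2$, I may — fixing the degree and assuming the statement in all lower degrees — reduce to a \emph{minimal} non-ladder $\sigma=\alpha\lbutcher\beta$: all its proper factors lie in $O(J')$, hence (by the induction hypothesis applied to $\alpha$ and $\beta$, which have smaller degree) are ladders. Writing $\beta=\beta_1\lbutcher\racine_r$ for the ladder $\beta$ (root $r$ carrying the single branch $\beta_1$), the tree $\sigma$ then has root $r$ with exactly the two branches $\alpha\preceq\beta_1$.

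The next step is to exhibit, in each case, an element of $J'$ whose leading term for the order \eqref{order} is $\sigma$. If $\alpha$ is a single vertex, the weighted anti-symmetry generator $g(\alpha,\beta):=|\alpha|\,(\alpha\lbutcher\beta)+|\beta|\,(\beta\lbutcher\alpha)\in J'$ (an element of \eqref{ideal}) already works: $\sigma=\alpha\lbutcher\beta$ carries two branches at its root whereas $\beta\lbutcher\alpha$ carries one, so $b(\sigma)=2>1=b(\beta\lbutcher\alpha)$ forces $T(g(\alpha,\beta))=\sigma$, whence $\sigma\in T(J')$.

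The essential case is $|\alpha|\ge 2$. Here both $\alpha$ and $\beta$ are nontrivial ladders, so $b(\alpha)=b(\beta)=1$ and $b(\sigma)=2$, while $\alpha\preceq\beta_1\prec\beta$ forces $\alpha\prec\beta$. Consequently the leading term of $g(\alpha,\beta)$ is now the \emph{wrong} tree $\mu:=\beta\lbutcher\alpha\succ\sigma$: its root (that of $\alpha$) carries the branches $(\beta,\alpha_1)$ in strictly decreasing order, where $\alpha=\alpha_1\lbutcher\racine_a$ and $\racine_a$ is the one-vertex tree carrying the root $a$ of $\alpha$. I will cancel $\mu$ against the instance $P\in J'$ of the pre-Lie identity \eqref{pre-lie identity} with $x=\beta$, $y=\alpha_1$, $z=\racine_a$, namely
\[
P=(\beta\lbutcher\alpha_1)\lbutcher\racine_a-\beta\lbutcher(\alpha_1\lbutcher\racine_a)-(\alpha_1\lbutcher\beta)\lbutcher\racine_a+\alpha_1\lbutcher(\beta\lbutcher\racine_a),
\]
in which $\beta\lbutcher(\alpha_1\lbutcher\racine_a)=\beta\lbutcher\alpha=\mu$, the swapped tree is $\mu':=\alpha_1\lbutcher(\beta\lbutcher\racine_a)$ (root $a$, branches $(\alpha_1,\beta)$), and the remaining two trees each carry a single branch. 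Forming $F:=g(\alpha,\beta)+|\beta|\,P\in J'$ makes $\mu$ cancel, leaving besides $\sigma$ only $\mu'$ and the two one-branch trees.

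It then remains to check that $\sigma=T(F)$. The one-branch trees have $b=1<2=b(\sigma)$, hence are $\prec\sigma$; and for $\mu'$ one has $b(\mu')=1+b(\alpha)=2=b(\sigma)$, so the comparison passes to the branch tuples, where the first branch of $\sigma$ is $\alpha=\alpha_1\lbutcher\racine_a$ while that of $\mu'$ is $\alpha_1$, and $|\alpha|>|\alpha_1|$ gives $\alpha\succ\alpha_1$, hence $\sigma\succ\mu'$. Thus $T(F)=\sigma$ with nonzero coefficient $|\alpha|$, so $\sigma\in T(J')$, and the induction closes (degrees $1,2$ contain no non-ladders, giving the base). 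I expect the main obstacle to be exactly this leading-term bookkeeping when $|\alpha|\ge 2$: the natural relation $g(\alpha,\beta)$ overshoots $\sigma$, and what rescues the argument is the preliminary reduction — combining the magmatic-ideal structure of $T(J')$ with the induction hypothesis to force both branches $\alpha,\beta$ to be ladders, so that $b(\alpha)=b(\beta)=1$. This is precisely what keeps the branch counts under control and lets the single pre-Lie correction $P$ expose $\sigma$ as the maximal monomial; without it, the residual term $\mu'$ could dominate $\sigma$.
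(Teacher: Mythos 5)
Your proof is correct, and while its outer skeleton matches the paper's, the treatment of the crucial case is genuinely different and considerably shorter. Both arguments run an induction and use the fact that $T(J')$ is a two-sided magmatic ideal to reduce to a tree $\sigma=\alpha\lbutcher\beta$ whose two factors are ladders, i.e.\ a root carrying exactly two ladder branches (the paper organizes this reduction by the number $k$ of root branches, disposing of $k=1$ and $k\geq 3$ exactly as you do); the divergence is in how one then produces an element of $J'$ with leading term $\sigma$. The paper splits according to the comparison of the branch degrees $p_1,p_2$: when $p_2<p_1$ the pre-Lie element \eqref{element f0} suffices, when $p_1=p_2$ it needs the ad hoc combination \eqref{element f}, and in the hardest sub-case $p_1<p_2$ it keeps the root of $\sigma$ fixed and runs a finite descent, applying the pre-Lie identity repeatedly (the elements $f_s$ of \eqref{recursive}) to peel the right-hand ladder vertex by vertex until the residual piece $l_2^{(s)}$ has degree smaller than $p_1$, at which point the leading-term comparison finally closes. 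You instead split according to whether $\alpha$ is a single vertex, and in the nontrivial case you re-root: the weighted anti-symmetry element $g(\alpha,\beta)$ transfers everything onto the root $a$ of $\alpha$, and a single pre-Lie correction $P$ at that root cancels the overshooting tree $\beta\lbutcher\alpha$, leaving besides $\sigma$ only two one-branch trees and the tree $\mu'=\alpha_1\lbutcher(\beta\lbutcher\bullet_a)$; the latter is dominated by $\sigma$ for the simple structural reason that its first branch $\alpha_1$ has strictly smaller degree than $\alpha$, no matter how large $\beta$ is. This one-step argument eliminates both the iteration and the trichotomy on $p_1$ versus $p_2$ (indeed, your comparison $\mu'\prec\sigma$ never uses the hypothesis $\alpha\preceq\beta_1$, so even your preliminary appeal to Lemma \ref{non-increasing} is dispensable); what the paper's route buys in exchange is only that it stays within manipulations of trees rooted at $r$, consistent with the recursive picture it uses elsewhere.

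One small caveat: because the generators of $E$ are graded, your case split should be stated in terms of the number of vertices of $\alpha$ rather than its degree. A single vertex decorated by a generator of degree $\geq 2$ satisfies $|\alpha|\geq 2$ yet admits no factorization $\alpha=\alpha_1\lbutcher\bullet_a$, so your second case does not apply to it; but your first case (weighted anti-symmetry alone, using $b(\beta\lbutcher\alpha)=1<2=b(\sigma)$) covers every one-vertex $\alpha$ regardless of its degree. Read as ``one vertex'' versus ``at least two vertices'', the two cases are exhaustive and the proof stands.
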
 
\begin{proof}
We use here the induction on the number $n$ of vertices. Let $\sigma$ be a tree in $T^E_{pl}$, which is not a ladder, with $k$ branches (starting from the root) and $n$ vertices. Since $\sigma$ is not a ladder, then $n$ must be greater than or equal to $3$. If $n=3$, and $k=1$ then $\sigma$ is a ladder. Hence, for $k=2$, we have that:\\\\
$\sigma = \arbrexyLabb \hbox{ is an element of $T(J')$, since there is } f= |x| \arbrexyLabb + \big( |y| + |r| \big) \arbrexylLab \hbox{ in $J'$, such that } T(f) = \sigma,$
for any $x, y, r \in E$. Also, for any $\tau \in T^E_{pl}$, the elements $\sigma \lbutcher \tau$ and $\tau \lbutcher \sigma$ are in $T(J')$ (since $T(J')$ is an ideal).\\

Suppose that any (no-ladder) tree in $T^E_{pl}$\label{E31} with $q$ vertices, where $q < n$, is an element in $T(J')$, let $\widetilde{\sigma} \in T^{E}_{\!pl}$ with $n$ vertices and $k$ branches, which is not a ladder, then:
\begin{enumerate}
\item If $k=1$, the tree $\widetilde{\sigma}$ is written $\sigma \lbutcher \racineLabr $, where $\sigma$ is not a ladder. Then $\sigma \in T(J')$ by the induction hypothesis, hence $\widetilde{\sigma} \in T(J')$ because $T(J')$ is an ideal.
\item \label{case ii for k}The case $k=2$. This corresponds to the case $\widetilde{\sigma} = \sigma \lbutcher l_m$ , where $l_m$ is a ladder in $T^E_{pl}$, with $m$ vertices for $m\geq 2$. If $\sigma$ is an element of $T(J')$ then so is $\widetilde{\sigma}$. If not, $\sigma$ is a ladder by the induction hypothesis. See the discussion below.
\item The case $k \geq 3$. These are trees $\widetilde{\sigma} = \big(\sigma \lbutcher \tau \big)$ where $\tau \in T^E_{pl}$, with $k-1$ branches, is not a ladder. We have then $\widetilde{\sigma} \in T(J')$ by induction hypothesis.
\end{enumerate}
  
Let us discuss the case \eqref{case ii for k} when $\sigma$ is a ladder and the ladder $l_m$ does not belong to $T(J')$. Let $l_{1}, l_{2}$ be ladders in $T^E_{pl}$ with $n_1, n_2$ vertices respectively, where $n_1, n_2 < n$, and let:
\begin{equation}\label{definition of sigma}
\widetilde{\sigma} =\arbrelmlLab = l_{1} \lbutcher ( l_{2} \lbutcher \racineLabr),\,\,\sigma' =\arbrelplLab = l_{2} \lbutcher ( l_{1} \lbutcher \racineLabr). 
\end{equation}
By the pre-Lie identity, with respect to the left Butcher product $\lbutcher$\label{lb5}, we find the following element:

\begin{equation}\label{element f0}
f_0= \arbrelmlLab - \arbrelmlpLab +  \arbrelplmLab - \arbrelplLab 
\end{equation}
in $J'$, such that $\widetilde{\sigma}, \sigma'$ are bigger trees, with respect to the order $\preceq$ defined in \eqref{order}, than the two other trees in $f_0$. Let $|l_{i}| = p_i, \hbox{ where } p_i > 0$, for $i=1, 2$. We have the following cases for $p_i$: 
\begin{enumerate}
\item Either $p_1 = p_2$, then in this case we take the elements:
\newline

\begin{equation}\label{element g}
 g = p_2  \arbrelplLab + (p_1 + |r|)\!\!\!\arbrelmrLab\,\,,\,f_1 = \arbrelmrlpLab - \arbrelmrlprLab  + \arbrelplmrrLab - \arbrelprlmLab\,,
\end{equation}
in $J'$, where $l_{2} = l^{(1)}_{2} \lbutcher \racineLabrrr$. Then we get the element:
\begin{equation}\label{element f}
f= p_2 f_0 + g - (p_1+ |r| ) f_1 \in J',
\end{equation} 
such that $T(f)= \widetilde{\sigma}$, since:
\newline
$$\arbrelprlmLab\,\prec\,\arbrelmlLab, \hbox{ for the order } \preceq.$$
 
\item Or, $p_2 < p_1$, then $\widetilde{\sigma} = T(f_0)$\label{Tf6}, where $f_0$ is the element described in \eqref{element f0}, hence $\widetilde{\sigma} \in T(J')$. 

\item\label{case of sigma} Or, $p_1 < p_2$, here we have that $\widetilde{\sigma} \prec \sigma'$ and the element $f_0$ described in \eqref{element f0} is an element in $J'$ such that $T(f_0) = \sigma'$, hence $\sigma' \in T(J')$. Now, for $\widetilde{\sigma}$ we can get an element in $J'$ such that $\widetilde{\sigma}$ becomes the leading term of this element, as follows: we replace the tree $\sigma' = l_{2} \lbutcher ( l_{1} \lbutcher \racineLabr)$ in $f_0$ by the tree:

\begin{equation}
 \sigma'':= \big(l_{1} \lbutcher \racineLabr\big) \lbutcher l_{2} = \arbrelmrLab\, ,
\end{equation}
using the element $g$ described in \eqref{element g}. This new tree $\sigma''$ is also greater than $\widetilde{\sigma}$ with respect to the order $\preceq$. By the pre-Lie identity, we can get the element $f$ described in \eqref{element f} such that\label{lb6}:

$$\widetilde{\sigma} \hbox{ and } \sigma'_1:=l^{(1)}_{2} \lbutcher \big( \big(l_{1} \lbutcher \racineLabr\big)\lbutcher \racineLabrrr\,\big) = \arbrelprlmLab \hbox{ are the two biggest trees appearing in this element. }$$

We verify whether $p_1= |l_{1}| > |l_{2}^{(1)}| = p_2 - |r_1|$, i.e. $\sigma'_1 \preceq \widetilde{\sigma}$, or not. If so, then $\widetilde{\sigma} \in T(J')$. If not, we replace $\sigma'_1$ in $f$ by the tree:\\

\begin{equation}\label{anti-symmetry ii}
 \sigma''_{1}:= \big( \big(l_{1} \lbutcher \racineLabr\big)\lbutcher \racineLabrrr\,\big) \lbutcher l^{(1)}_{2} = \arbrelmrrlpLab\;\;.
\end{equation}

If $n_2=1$, the tree $\sigma''_{1}$ is a ladder. If $n_2 \geq 2$, then $\sigma''_{1}$ is not a ladder and is greater than $\widetilde{\sigma}$. Then we need to apply the pre-Lie identity once again to the tree $\sigma''_{1}$ in \eqref{anti-symmetry ii}, and replace it by:\\

$$ \sigma'_2 := l^{(2)}_{2} \lbutcher \big( \big( \big(l_{1} \lbutcher \racineLabr\big)\lbutcher \racineLabrrr\,\big) \lbutcher \racineLabrrrr\,\big) = \arbrelplmrrrLab, \hbox{ where } l_{2}^{(2)} \lbutcher \racineLabrrrr = l_{2}^{(1)} . $$

Let $p_{2}^{(i)} = |l_{2}^{(i)}|$, where $l_{2}= (\cdots((l_{2}^{(i)} \lbutcher \racineLabri\;) \lbutcher \racineLabrii\;\;) \cdots) \lbutcher \racineLabrrr, \hbox{ for } i \geq 1$. After a finite number $s$ of steps  applying the pre-Lie identity in the expression:\\

\begin{equation}
\sigma''_{s }:= \big(\big(\cdots \big((l_{1} \lbutcher \racineLabr) \lbutcher \racineLabrrr\;\big) \cdots \big) \lbutcher \racineLabrjj\;\;\;\big) \lbutcher \big(l_{2}^{(s)} \lbutcher \racineLabrj\;\big) = \arbrelmrslp,\, \hbox{ where } \sigma'_s =  \arbrelplmrs
\end{equation}
\newline
which can be formulated as: 

\begin{equation}\label{recursive}
f_s= \arbrelmrslp - \arbrelplmrrrsLab + \arbrelmrrlpsLab - \arbrelplmrs \in J',
\end{equation}
we can find an element $f \in J'$, such that $\widetilde{\sigma}$ and $\sigma'_s$ become bigger trees of $f$ with $p_2^{(s)} < p_1$, i.e. $\sigma'_s \prec \widetilde{\sigma}$. Hence, $\widetilde{\sigma}$ described in \eqref{definition of sigma} is in $T(J')$\label{Tf7}. Then, Theorem \ref{main} is proved.  
\end{enumerate}
\end{proof}
%%%%%%
\section{A monomial basis for the free Lie algebra}\label{CIIIsecIII}
%%%%%%
The set $T^ E$ forms the free Non-Associative Permutive (NAP) magma generated by the set $\{ \racineLabaa : \hbox{ for } a \in E \}$\label{E32}, under the usual Butcher product $\butcher$. Corresponding to the total order defined in \eqref{order}, we can define a non-planar version $\preceq$ of this order, as follows:

\begin{equation}\label{non-planar version}
\hbox{ for any $s, t \in T^{E}$, then} \,s \preceq t \hbox{ if and only if } 
\end{equation}
\begin{enumerate}
\item $|s| < |t|, \hbox{ or }$:
\item $|s| = |t| \hbox{ and } b(s) <  b(t),$ or:
\item\label{conditioniii} $|s| = |t|, b(s) =  b(t) = k$ and $s= B_{+, r}(s_1 \ldots s_k), t=B_{+, r'}(t_1 \ldots t_k)$ such that $\exists\, j \leq k$, with $s_i = t_i$, for $i < j, s_j \preceq t_j$ where $s_1 \preceq \cdots \preceq s_k, t_1 \preceq \cdots \preceq t_k$ are the branches of $s, t$ respectively, or:
\item  $|s| = |t|, b(s) =  b(t) = k$, $s_l = t_l , \hbox{ for all } l= 1, \ldots, k$ and $r \leq r'$, where $\racineLabr$ (respectively $\racineLabrr$) is the root of $s$ (respectively $t$).
\end{enumerate}
 
By the same way as in Proposition \ref{well order}, we observe that the order $\preceq$ defined in \eqref{non-planar version} is a monomial well order. The space $\Cal{T}^E$ forms with the Butcher product the free NAP algebra generated by $E$ \cite{M.L06}. The first author introduced in \cite{AM2014}\footnote{See subsection 2.2 in \cite{AM2014}.} a section $S$ from the NAP algebra $(\Cal{T}^ E, \butcher)$ into the magmatic algebra $(\Cal{T}^ E_{pl}, \lbutcher)$:
\diagrama{
\xymatrix{
( \Cal {T}^{E}_{pl}, \lbutcher ) \ar@{->>}[r]^{\pi}& ( \Cal{T}^ E, \butcher ) \ar@<02mm>@{.>}[l]^S. }}

Here, we choose $S(t) = S_{min}(t) := Min_{\preceq} \big\{ \tau \in T^{E}_{pl}: \pi(\tau)=t \big\}$, for any $t \in T^E$, where $ Min_{\preceq} \{ -\}$ means that we choose the minimal element $\tau$ in $T^E_{pl}$ with respect to the order ''$\preceq$'' with $\pi(\tau) = t$. 
\begin{prop}\label{section s increasing}
The section map $S_{min}$ defined above is an increasing map.
\end{prop}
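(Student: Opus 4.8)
The plan is to argue by induction on the degree $n=|t|$, after first pinning down an explicit recursive description of $S_{min}$. The preliminary step I would carry out is the characterization that $S_{min}(t)$ is exactly the unique planar representative of $t$ lying in $O(J)$, i.e. the one whose branches are displayed in nondecreasing order (for $\preceq$) at every vertex. One half is immediate from Proposition \ref{princ}: that result gives a bijection between $O(J)_n$ and the non-planar trees $T^E_n$, so $t$ has exactly one nondecreasing planar representative. For the other half I would argue by contradiction: if $S_{min}(t)\notin O(J)$, then at some vertex $v$ two consecutive branches are out of order, and swapping them yields a planar tree $\tau'$ with $\pi(\tau')=t$. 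At the root of the subtree hanging from $v$ this swap makes the branch tuple lexicographically smaller, so by clause (3) of \eqref{order} that subtree strictly decreases; since $\preceq$ is a monomial order (Proposition \ref{well order}), replacing the subtree factor by the smaller one strictly decreases the whole tree, contradicting minimality of $S_{min}(t)$. Hence $S_{min}(t)\in O(J)$, and by uniqueness it is the nondecreasing representative.

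This characterization immediately yields the recursion I actually need: writing $t=B_{+,r}(t_1\cdots t_k)$,
\begin{equation*}
S_{min}(t)=B_{+,r}\big(S_{min}(t_{(1)})\cdots S_{min}(t_{(k)})\big),
\end{equation*}
where $t_{(1)},\ldots,t_{(k)}$ is the reordering of the branches making $S_{min}(t_{(1)})\preceq\cdots\preceq S_{min}(t_{(k)})$; indeed each branch of $S_{min}(t)\in O(J)$ again lies in $O(J)$, hence equals $S_{min}$ of its own $\pi$-image by uniqueness. I would also record that $S_{min}$ is injective, being a section of $\pi$ (so $\pi\circ S_{min}=\mathrm{id}$), which is what lets me promote non-strict to strict inequalities.

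With these in hand the monotonicity follows by induction. Given $s\prec t$ in $T^E$, the plan is to run through the four clauses of \eqref{non-planar version} and transfer each to \eqref{order}. Since $\pi$ (hence $S_{min}$) preserves both degree and the number of root branches, clauses (1) and (2) pass verbatim: $|s|<|t|$ gives $|S_{min}(s)|<|S_{min}(t)|$, and $b(s)<b(t)$ gives $b(S_{min}(s))<b(S_{min}(t))$. In clause (3) the induction hypothesis (strict monotonicity in lower degree) together with injectivity carries the sorted non-planar branches $s_1\preceq\cdots\preceq s_k$ to the sorted planar branches $S_{min}(s_1)\preceq\cdots\preceq S_{min}(s_k)$, and likewise for $t$, so the two sorted branch sequences first differ at the same index $j$, where $s_j\prec t_j$ upgrades to $S_{min}(s_j)\prec S_{min}(t_j)$; hence the planar branch tuples compare the same way lexicographically and $S_{min}(s)\prec S_{min}(t)$ by clause (3) of \eqref{order}. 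In clause (4) all branches coincide, so do their images, and the roots satisfy $r\le r'$, giving $S_{min}(s)\preceq S_{min}(t)$ by clause (4). The base case $n=1$ is trivial since $S_{min}$ fixes the one-vertex trees.

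I expect the main obstacle to be the characterization step and the compatibility of the two branch-orderings on which clause (3) rests: the induction must be organized so that strict monotonicity for branches of strictly smaller degree is already available when $s$ and $t$ themselves are compared. The two delicate points are (i) that a swap deep inside a tree genuinely lowers the whole tree, which is where the monomial property of Proposition \ref{well order}, applied along the path to the vertex, is essential, and (ii) that injectivity of $S_{min}$ is exactly what converts the (a priori non-strict) increasing property into the strict inequalities needed to locate the first differing branch. Everything else reduces to the routine clause-by-clause bookkeeping sketched above.
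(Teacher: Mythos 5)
Your proof is correct and follows essentially the same route as the paper: a clause-by-clause transfer of the non-planar order \eqref{non-planar version} to the planar order \eqref{order}, using that $S_{min}$ preserves degree and number of root branches, with the heart of the matter being clause (3). The preliminary work you add — characterizing $S_{min}(t)$ as the unique planar representative lying in $O(J)$ (via Proposition \ref{princ} and the swap-plus-monomiality argument) and extracting from it the branch recursion and the induction on degree — is precisely what the paper leaves implicit in its clause-(3) step (and records separately, just afterwards, as the bijection of $S_{min}$ onto $O(J)$), so it supplies rigor rather than a different method.
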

\begin{proof}
Take two trees $s$ and $t$ in $T^E$\label{E33} with $s \preceq t$.  The section $S_{min}$,  obviously, respects the degree and the number of branches of the trees. Hence, we can suppose $|s| = |t|$ and $b(s) = b(t) = l$. We have then:

\begin{equation}
s =B_{+, r}(s_1, \ldots, s_l),\; t = B_{+, r'}(t_1, \ldots, t_l), \hbox{ with } s_1 \preceq \cdots \preceq s_l\;,\; t_1 \preceq \cdots \preceq t_l.
\end{equation}
Condition \eqref{conditioniii} of the definition, in \eqref{non-planar version}, of the order $\preceq$ exactly means that the $l$-tuple of branches of $ S_{min}(s)$ is lexicographically smaller than the $l$-tuple of branches of $ S_{min}(t)$. If $s$ and $t$ have the same branches and $s \preceq t$, we also have $ S_{min}(s) \preceq  S_{min}(t)$, as one can see by comparing the roots. This proves Proposition \ref{section s increasing}. 
\end{proof}

\begin{prop}
The section map $S_{min}$ on $T^E$ is a bijection onto $O(J)$\label{Tf8}, where $J$ is the (two-sided) ideal generated by the pre-Lie identity in  $\big(\Cal{T}^E_{pl}, \lbutcher\big)$.
\end{prop}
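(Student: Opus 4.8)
The plan is to exploit two facts already available: that $S_{min}$ is, by its very definition, a section of the projection $\pi\colon T^E_{pl}\to T^E$, and that Proposition \ref{princ} identifies $O(J)$ with the planar trees whose branches are arranged in nondecreasing order at every vertex. First I would observe that $\pi\circ S_{min}=\mathrm{id}_{T^E}$, since $S_{min}(t)$ is by construction a planar lift of $t$; this already makes $S_{min}$ injective, and it plainly preserves the degree, so for each $n$ it restricts to an injection from $T^E_n$ into the degree-$n$ part of $T^E_{pl}$. The whole statement then reduces to controlling the image: once $\mathrm{Im}(S_{min})\subseteq O(J)$ is known, the equality of cardinalities $\#O(J)_n=\#T^E_n$ recorded in the proof of Proposition \ref{princ} upgrades this degree-preserving injection into a bijection $T^E_n\to O(J)_n$ for every $n$, which finishes the proof.

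The core step is therefore to show $S_{min}(t)\in O(J)$, i.e. that the $\preceq$-minimal lift of $t$ has nondecreasing branches at every vertex. I would argue this by an exchange argument. Suppose $\sigma:=S_{min}(t)$ violated the condition of Proposition \ref{princ}; then at some vertex $v$ two consecutive branches satisfy $\sigma_{i-1}\succ\sigma_i$. Transposing just these two branches produces a planar tree $\sigma'$ with $\pi(\sigma')=\pi(\sigma)=t$, since permuting branches does not affect the underlying non-planar tree, and I claim $\sigma'\prec\sigma$. This would contradict the minimality defining $S_{min}(t)$, so no such $v$ can exist and $\sigma\in O(J)$ by Proposition \ref{princ}.

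To see $\sigma'\prec\sigma$ I would isolate a monotonicity lemma: if a subtree $\sigma_v$ of a planar tree is replaced by a tree $\rho$ of the same degree with $\rho\prec\sigma_v$, then the resulting tree is $\prec$ the original. This follows by induction on the depth of $v$ from the recursive definition \eqref{order}: at the root it is immediate, while for deeper $v$ the modified and original trees share degree, root and number of branches, so condition (3) of \eqref{order} reduces the comparison lexicographically to the single root-branch containing $v$, which is smaller by the inductive hypothesis. Applying this with $\rho$ the subtree at $v$ obtained after the transposition, which is $\prec\sigma_v$ because its branch tuple drops lexicographically at position $i-1$ where $\sigma_i\prec\sigma_{i-1}$, yields $\sigma'\prec\sigma$. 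I expect this monotonicity lemma to be the only delicate point: the transposition is visibly local, but one must check that a local decrease at an interior vertex genuinely propagates to a global decrease, and that is precisely what the recursive clause (3) of the order guarantees. (Proposition \ref{section s increasing} is not needed for this argument, though it is consistent with the same picture.)
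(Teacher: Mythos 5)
Your proof is correct and is essentially the paper's own argument made explicit: the paper's entire proof of this proposition reads ``Clear from Proposition \ref{princ}'', and your two ingredients --- the exchange/monotonicity argument showing that the $\preceq$-minimal lift must have nondecreasing branches at every vertex, and the cardinality identity $\#O(J)_n = \#T^E_n$ --- are drawn precisely from Proposition \ref{princ} and the counting step recorded in its proof, so you have supplied exactly the justification that ``clear'' elides. The only point you leave implicit is that upgrading a degree-preserving injection to a bijection via equal cardinalities requires each homogeneous component to be a finite set, which holds here because every $E_n$ is assumed finite (so trees of bounded degree use finitely many decorations and finitely many shapes).
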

\begin{proof}
Clear from Proposition \ref{princ}.
\end{proof}
\noindent Define a relation $R$ on $T^E$ as follows:
\begin{equation}\label{relation R}
 sRs' \hbox{ if and only if there are } t, t' \in T^E \hbox{ and } v, w \in V(t') \hbox{ such that } s = t \to_v t', s'= t \to_w t'
\end{equation}
for $s, s' \in T^E$, and $w$ is related with $v$ by an edge $\arbreaaLab$ with $w$ above $v$. Let $\#$  be the transitive closure of the relation $R$ defined in \eqref{relation R}, i.e. for $s, s' \in T^E$, we say that $s \#  s'$ if and only if there is $s_1, \ldots, s_l \in T^E$ such that $s R s_1 R \ldots R s_l R s'$.
\begin{lem}\label{deduced}
Let $s, s', t \in T^E$, if $s' \preceq s$ then $s' \to_v t \preceq s \to_v t, \hbox{ for } v \in V(t)$.

\end{lem}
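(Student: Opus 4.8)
The plan is to argue by induction on the number of vertices $|t|$ of the target tree, distinguishing whether the grafting vertex $v$ is the root of $t$ or lies inside one of its branches. In both situations the comparison of $s'\to_v t$ and $s\to_v t$ reduces to one monotonicity statement about \emph{inserting a tree into a fixed family of branches and re-sorting it}, which is the real content of the lemma. So I would first isolate that statement as an auxiliary fact, independent of trees.

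\textbf{Key fact (monotone insertion).} Let $B$ be a fixed finite multiset of elements of $T^E$ and let $a',a\in T^E$ with $a'\preceq a$. Writing $\mathrm{sort}(\cdot)$ for the rearrangement of a multiset into a $\preceq$-nondecreasing tuple, I claim that
$$\mathrm{sort}\big(B\cup\{a'\}\big)\ \preceq_{\mathrm{lex}}\ \mathrm{sort}\big(B\cup\{a\}\big),$$
where $\preceq_{\mathrm{lex}}$ is the lexicographic order built from $\preceq$. To prove it I would establish the stronger componentwise domination $u_i\preceq w_i$ for every $i$, where $(u_i)_i$ and $(w_i)_i$ are the two sorted tuples. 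This is exactly the monotonicity of order statistics under replacing one entry by a $\preceq$-smaller one: for any tree $\theta$, every element of $B\cup\{a\}$ that is $\preceq\theta$ is still $\preceq\theta$ after replacing $a$ by $a'\preceq a$, so $B\cup\{a'\}$ has at least as many elements $\preceq\theta$ as $B\cup\{a\}$. Taking $\theta=w_i$, at least $i$ elements of $B\cup\{a'\}$ are $\preceq w_i$, hence $u_i\preceq w_i$. Since $\preceq$ is total, at the first index where the tuples differ the inequality is strict in the right direction, which is precisely clause (3) of \eqref{non-planar version}; if the tuples never differ they coincide.

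\textbf{Reduction.} If $|s'|<|s|$ then $|s'\to_v t|=|s'|+|t|<|s|+|t|=|s\to_v t|$, and the conclusion holds by clause (1) of \eqref{non-planar version} for any $v$; so I may assume $|s'|=|s|$. For the base case $v=\mathrm{root}$, write $t=B_{+,\,r'}(t_1\cdots t_m)$; then $s\to_v t=B_{+,\,r'}(t_1\cdots t_m\,s)$ and $s'\to_v t=B_{+,\,r'}(t_1\cdots t_m\,s')$, both with $m+1$ branches, equal degree, and equal root $r'$. The Key fact with $B=\{t_1,\dots,t_m\}$, $a=s$, $a'=s'$ compares the sorted branch tuples, giving $s'\to_v t\preceq s\to_v t$ (clause (3) if the tuples differ, equality of trees otherwise since the roots coincide). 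For the inductive step $v\neq\mathrm{root}$, the vertex $v$ lies in a unique branch $t_j$ (the one whose vertex set contains $v$), and grafting touches only that branch:
$$s\to_v t=B_{+,\,r'}\big((t_i)_{i\neq j}\,,\ s\to_v t_j\big),\qquad
s'\to_v t=B_{+,\,r'}\big((t_i)_{i\neq j}\,,\ s'\to_v t_j\big).$$
By the induction hypothesis applied to the smaller tree $t_j$, one has $s'\to_v t_j\preceq s\to_v t_j$; both full trees share the root $r'$ and the branch number $m$, and have equal degree since $|s'|=|s|$. Applying the Key fact with $B=\{t_i:i\neq j\}$, $a=s\to_v t_j$, $a'=s'\to_v t_j$ again yields the branch-tuple comparison, hence $s'\to_v t\preceq s\to_v t$, closing the induction.

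\textbf{Anticipated obstacle.} The genuinely delicate point is making clause (3) of \eqref{non-planar version} precise in the presence of ties among isomorphic branches, and checking that it is really implied by the componentwise domination of order statistics rather than by a naive elementwise comparison of the unsorted families; the sorting is what makes the statement correct and forces the order-statistics argument. A secondary care is that $t_j$ and the residual multiset $\{t_i:i\neq j\}$ must be well-defined for a \emph{non-planar} tree, which they are because the chosen vertex $v$ singles out one branch occurrence even when several branches coincide. Once these are handled, the remaining verifications (degrees, branch counts, and the trichotomy of clauses (1),(3),(4)) are routine bookkeeping.
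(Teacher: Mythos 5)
Your proof is correct. For comparison: the paper's entire proof of this lemma is the single sentence ``Immediate from the definition \eqref{non-planar version} of the order $\preceq$,'' so there is no detailed argument to measure yours against; what you have done is supply the verification that the authors felt was obvious. Your decomposition is the natural one, and it correctly identifies where the actual content lies: the reduction to equal degrees via clause (1), the induction on $|t|$ that localizes the grafting to the branch containing $v$, and above all the monotone-insertion fact, proved by the order-statistics count (for each threshold $\theta$, replacing $a$ by $a'\preceq a$ cannot decrease the number of elements $\preceq\theta$, hence the $i$-th smallest entries satisfy $u_i\preceq w_i$, hence lexicographic domination of the \emph{sorted} branch tuples). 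That last point is genuinely the crux, since clause (3) of the order compares branches only after re-sorting, and a naive unsorted comparison would not match the definition; your handling of ties among isomorphic branches and of the non-planar branch multiset is also the right bookkeeping. In short, the proposal is a correct and complete elaboration of exactly the route the paper gestures at but does not write down.
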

\begin{proof}
Immediate from the definition \eqref{non-planar version} of the order $\preceq$.
\end{proof} 
\begin{lem}
Let $s, s' \in T^E$, if  $s \#  s'$ then $s' \prec s$.
\end{lem}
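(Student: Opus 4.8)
The plan is to reduce everything to a single step of the relation $R$. Since $\#$ is by definition the transitive closure of $R$, and $\prec$ is a strict order (hence transitive), it suffices to prove the statement for one step: if $sRs'$ then $s' \prec s$; the general case then follows by chaining $sRs_1R\cdots Rs_lRs'$ and applying transitivity of $\prec$. So I would fix $t, t' \in T^E$ and vertices $v, w \in V(t')$ with $w$ joined to $v$ by an edge, $w$ above $v$, and set $s = t \to_v t'$, $s' = t \to_w t'$. Both trees have degree $|t| + |t'|$, so clause (1) of \eqref{non-planar version} never separates them and we are always comparing equal-degree trees.

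I would then argue by induction on the number of vertices of $t'$. The first case (which also covers the smallest possible $t'$) is when $v$ is the root $r'$ of $t'$: grafting $t$ at $r'$ creates a new branch at the root, so $b(s) = b(t') + 1$, whereas grafting at the child $w$ leaves the root untouched, so $b(s') = b(t')$. Thus $b(s') < b(s)$, and clause (2) of \eqref{non-planar version} immediately gives $s' \prec s$.

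When $v$ is not the root, both $v$ and $w$ lie inside a single branch $\tau_p$ of $t' = B_{+, r'}(\tau_1 \cdots \tau_m)$ (the branch containing $v$, with $w$ a child of $v$ inside it). Grafting modifies only that branch, so $s = B_{+, r'}(\tau_1 \cdots (t \to_v \tau_p) \cdots \tau_m)$ and $s' = B_{+, r'}(\tau_1 \cdots (t \to_w \tau_p) \cdots \tau_m)$; in particular $s$ and $s'$ share the same root, the same degree, and the same number $m$ of branches. Since $\tau_p$ has strictly fewer vertices than $t'$, the inductive hypothesis applied to the pair $(t \to_v \tau_p,\, t \to_w \tau_p)$ — which is again a single $R$-step, one level down — yields $(t \to_w \tau_p) \prec (t \to_v \tau_p)$, and these two branches have equal degree $|t| + |\tau_p|$. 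It then remains to propagate this inequality through $B_{+,r'}$.

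The main obstacle, which I would isolate as a separate lemma, is the monotonicity of the branch-replacement operation: if two equal-degree trees with the same root and the same number of branches differ in exactly one branch, and that branch is strictly $\preceq$-larger in one of them, then that whole tree is strictly larger for $\preceq$. By the structure of \eqref{non-planar version} this reduces to a purely combinatorial fact about clause \eqref{conditioniii}, namely that replacing one entry of a multiset by a strictly $\preceq$-larger entry makes the sorted tuple of branches lexicographically strictly larger. I would prove this by an order-statistics argument: writing $F_X(x)$ for the number of branches of $X$ that are $\preceq x$, replacing $t \to_w \tau_p$ by the larger $t \to_v \tau_p$ can only decrease each $F$, and does so strictly for thresholds $x$ with $(t \to_w \tau_p) \preceq x \prec (t \to_v \tau_p)$; hence every order statistic of the branch tuple weakly increases and at least one strictly increases, which is exactly the lexicographic comparison demanded by \eqref{conditioniii}. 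Applying this to $s$ and $s'$ gives $s' \prec s$ and closes the induction.
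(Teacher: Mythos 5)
Your proof is correct, and it goes further than the paper's own argument. The paper makes exactly the same opening move as you do --- reduce, via transitivity of $\prec$, to a single $R$-step $s = t \to_v t'$, $s' = t \to_w t'$ --- but then simply declares it \emph{obvious} that grafting at the lower vertex $v$ yields the $\preceq$-larger tree, and stops there. You actually prove that one-step claim: induction on the size of $t'$, with the case $v = $ root settled by the branch-counting clause of \eqref{non-planar version} (grafting at the root creates an extra root branch, grafting at $w$ does not), and the case $v \neq$ root reduced to the same statement one level down, inside the branch $\tau_p$ containing both $v$ and $w$, followed by your branch-replacement monotonicity lemma. That lemma is precisely the non-trivial point hidden under the paper's ``obviously'': clause \eqref{conditioniii} compares \emph{sorted} tuples of branches, so one must check that replacing a single element of a multiset of branches by a strictly $\preceq$-larger one makes the sorted tuple lexicographically strictly larger. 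Your order-statistics argument (the counting function $F$ weakly decreases pointwise, strictly on the interval between the two branches, hence every order statistic weakly increases and at least one strictly increases) establishes exactly this, and the lexicographic conclusion follows since the first differing position must then increase. So the skeleton is the paper's, but where the paper asserts, you prove; the cost is the auxiliary induction plus one combinatorial lemma, and both are sound.
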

\begin{proof}
For $s, s' \in T^E$\label{np1}, if $s R s'$, then by definition of the relation $R$ in \eqref{relation R}, there are $t, t' \in T^E$ and $v, w \in V(t)$ such that $s= t \to_v t',\,s' = t \to_w t'$, and an edge  $\arbreaaLab$ in $t'$. Obviously, the tree obtained by grafting $t$ on the tree $t'$ at $v$ is greater, with respect to the order $\preceq$, than the tree deduced by grafting $t$ on $t'$ at $w$, i.e $s' \prec s$. The passage from $R$  to \#  is obvious.
\end{proof}

\begin{prop}\label{compatibility}
The Butcher product $\butcher$ is compatible with the relation $R$, i.e. for $s, s', t \in T^E$, if $s R s'$ then $(s \butcher t )R (s' \butcher t) \hbox{ and } (t \butcher s) R (t \butcher s')$. Also, if $s R s' \hbox{ and } t R t'$ then $(s \butcher t ) \#  (s' \butcher t'), \hbox{ for } t' \in T^E$.

\end{prop}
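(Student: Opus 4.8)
The plan is to reduce both assertions to a single commutation property between grafting at an interior vertex and the Butcher product, which grafts at the root. First I would unwind the hypothesis $sRs'$: by \eqref{relation R} there are $u,u'\in T^E$ and vertices $v,w\in V(u')$, with $w$ a child of $v$ (the edge $\arbreaaLab$, $w$ above $v$), such that $s=u\to_v u'$ and $s'=u\to_w u'$. I would also record the elementary fact that $x\butcher y=x\to_{r} y$, where $r$ is the root of $y$; this is immediate from \eqref{butcher product} and \eqref{pre-lie product} once one recalls that, for non-planar trees, the order of the branches at the root is irrelevant.

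The crucial observation is a pair of commutation identities: for any $t\in T^E$ and any $v\in V(u')$,
\begin{equation}
(u\to_v u')\butcher t = u\to_v (u'\butcher t), \qquad t\butcher (u\to_v u') = u\to_v (t\butcher u').
\end{equation}
Both hold because the Butcher product merely attaches one new branch at the relevant root ($r_{u'}$, resp. $r_t$), which neither creates nor destroys the interior vertex $v$ nor the edge joining $v$ to $w$; the subsequent grafting of $u$ at $v$ then acts on the untouched copy of $u'$ sitting inside $u'\butcher t$ (resp.\ $t\butcher u'$). I would verify these by comparing the branch multisets at each vertex, taking care of the degenerate case $v=r_{u'}$, where both sides simply carry the extra branch alongside the copy of $u$ at the root.

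Granting the identities, the first assertion is immediate. Taking $\tilde u:=u$ and $\tilde u':=u'\butcher t$, the vertices $v,w$ still lie in $V(\tilde u')$ with $w$ above $v$, and the first identity gives $s\butcher t=\tilde u\to_v\tilde u'$ and $s'\butcher t=\tilde u\to_w\tilde u'$, whence $(s\butcher t)\,R\,(s'\butcher t)$ by \eqref{relation R}; symmetrically, with $\tilde u':=t\butcher u'$ the second identity yields $(t\butcher s)\,R\,(t\butcher s')$. For the last assertion I would simply chain these facts: assuming $sRs'$ and $tRt'$, right multiplication by $t$ gives $(s\butcher t)\,R\,(s'\butcher t)$, while left multiplication by $s'$ applied to $tRt'$ gives $(s'\butcher t)\,R\,(s'\butcher t')$, so that $s\butcher t\;R\;s'\butcher t\;R\;s'\butcher t'$ and therefore $(s\butcher t)\,\#\,(s'\butcher t')$ by definition of $\#$ as the transitive closure of $R$. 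The only genuine work is the verification of the two commutation identities; once these are in place, both the one-sided compatibilities and the closure statement follow formally. I expect the bookkeeping of vertices under the Butcher grafting — in particular confirming that the distinguished edge $v$–$w$ is transported unchanged — to be the main, though routine, obstacle.
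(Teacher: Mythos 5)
Your proposal is correct and follows essentially the same route as the paper's own proof: the paper likewise establishes $(s\butcher t)\,R\,(s'\butcher t)$ and $(s'\butcher t)\,R\,(s'\butcher t')$ by writing $s\butcher t=s_1\to_v(s_2\butcher t)$, $s'\butcher t=s_1\to_w(s_2\butcher t)$ and $s'\butcher t'=t_1\to_{w'}(s'\butcher t_2)$, $s'\butcher t=t_1\to_u(s'\butcher t_2)$, then concludes by transitivity of $\#$. The only difference is presentational: you isolate the two commutation identities between $\to_v$ and $\butcher$ as explicit lemmas, whereas the paper performs the same computation inline.
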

\begin{proof}
For any $s, s', t, t' \in T^E$, if $sRs' \hbox{ and } tRt'$, then by definition of $R$ we have:
\begin{align*}
& s= s_1 \to_v s_2, s' = s_1 \to_w s_2, \hbox{ for } v, w \in V(s_2), \hbox{ with } \arbreaaLab \hbox{ in } s_2, \hbox{ and } t= t_1 \to_u t_2, t' = t_1 \to_{w'} t_2,\\
& \hbox{ for } u, w' \in V(t_2), \hbox{ with } \arbreabLab \hbox{ in } t_2. \\
&\hbox{ Let: } s \butcher t = (s_1 \to_v s_2) \butcher (t_1 \to_u t_2) = s_1 \to_v s'', \hbox{ for } v \in V(s''), \hbox{ where } s''= s_2 \butcher t, \hbox{ and }\\
& s' \butcher t = (s_1 \to_w s_2) \butcher (t_1 \to_u t_2) = s_1 \to_w s'', \hbox{ for $\arbreaaLab$ in } s'', \hbox{ then: }
\end{align*}
\begin{equation}\label{st}
s \butcher t = (s_1 \to_v s'') R  (s_1 \to_w s'' )= s' \butcher t.
\end{equation}
Also, for $s' \butcher t' = (s_1 \to_w s_2) \butcher (t_1 \to_{w'} t_2) = t_1 \to_{w'} s''', \hbox{ where } s''' = s' \butcher t_2, \hbox{ with } w' \in V(t_2) \subset V(s''')$, and $s' \butcher t = (s_1 \to_w s_2) \butcher (t_1 \to_{u} t_2) = t_1 \to_{u} s''', \hbox{ for } u \in V(s''')$. Then we have:
\begin{equation}\label{s't}
s' \butcher t R s' \butcher t'.
\end{equation}
One can verify that $s \butcher t R s \butcher t'$ by following the same steps as above. So, from \eqref{st} and \eqref{s't}, we obtain that $s \butcher t  \#  s' \butcher t'$. 
\end{proof}
For any $t \in T^E$\label{E34}, define a class of $t$ with respect to $\#$  by:
\begin{equation}
[t]_{\#}:= \{s \in T^E: t \#  s \}.
\end{equation}
This class has the following properties:
\begin{enumerate}
\item $t$ is maximal among the representative elements in the class $[t]_{\#}$, i.e. for any $s \in [t]_{\#}$ then $s \preceq t$. This property is deduced from Lemma \ref{deduced}.
\item For any $s \in [t]_{\#}$, then $[s]_{\#}\subset [t]_{\#}$. 
\end{enumerate}

\begin{lem}
For any $t \in T^E$, then:
\begin{equation}\label{realized}
\widetilde{\Psi}_{S_{min}}(t)= \sum_ {s \in [t]_{\#}}\beta_{S_{min}}(s,t)s,
\end{equation}

\begin{figure}[h]
\vspace{-35pt}
\centering
\diagrama{
\xymatrix{
( \Cal {T}^{E}_{pl}, \lbutcher ) \ar[r]^{\Psi}\ar@<02mm>@{->>}[d]^{\pi}
&( \Cal {T}^{E}_{pl}, \searrow ) \ar@{->>}[d]^{\pi}\\
  ( \Cal{T}^ E, \butcher ) \ar@{.>}[r]_{\widetilde{\Psi}_{S_{min}}}\ar@<03mm>@{.>}[u]^{S_{\!min}} & ( \Cal{T}^ E, \to)}}
\caption{}
\label{}
\end{figure}
\noindent where the map $\widetilde{\Psi}_{S_{min}}$ and the coefficients $\beta_{S_{min}}(s,t)$ are described in \cite[Corollary 5]{AM2014}.  
\end{lem}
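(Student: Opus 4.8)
The map $\widetilde{\Psi}_{S_{min}}$ and the coefficients $\beta_{S_{min}}(s,t)$ are those of \cite[Corollary 5]{AM2014}, so the substance of the statement is the location of the support: every tree $s$ occurring with nonzero coefficient lies in the class $[t]_\#$, with $t$ itself as the leading term. The plan is to establish a multiplicative recursion for $\widetilde{\Psi}_{S_{min}}$ and then control the support by induction on the number of vertices of $t$.

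First I would record two multiplicativity facts. Since $\Psi$ is the magmatic isomorphism $(\Cal{T}^E_{pl},\lbutcher)\to(\Cal{T}^E_{pl},\searrow)$, it satisfies $\Psi(\sigma\lbutcher\tau)=\Psi(\sigma)\searrow\Psi(\tau)$. Forgetting planarity turns left grafting into the pre-Lie grafting: for each vertex $v$ one has $\pi(\sigma\searrow_v\tau)=\pi(\sigma)\to_v\pi(\tau)$, and summing over $v\in V(\tau)$ gives $\pi(\sigma\searrow\tau)=\pi(\sigma)\to\pi(\tau)$, so $\pi$ is a morphism from $\searrow$ to $\to$. Because $\widetilde{\Psi}_{S_{min}}=\pi\circ\Psi\circ S_{min}$, I would then decompose $S_{min}(t)$ along its leftmost branch. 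Writing $t=t_1\butcher t''$ with $t_1$ a minimal branch of $t$, one has $S_{min}(t)=S_{min}(t_1)\lbutcher S_{min}(t'')$, since by Proposition \ref{princ} the trees of $O(J)$ are exactly those with nondecreasing branches and every subtree of such a tree is again in $O(J)$, on which $S_{min}$ is the bijection onto $O(J)$. Applying the two facts above then yields the recursion
$$\widetilde{\Psi}_{S_{min}}(t)=\widetilde{\Psi}_{S_{min}}(t_1)\to\widetilde{\Psi}_{S_{min}}(t'').$$

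The inductive step is then a support computation. By induction every tree $s_1$ (resp. $s''$) appearing in $\widetilde{\Psi}_{S_{min}}(t_1)$ (resp. $\widetilde{\Psi}_{S_{min}}(t'')$) satisfies $t_1\# s_1$ (resp. $t''\# s''$), so each monomial on the right has the form $s_1\to_v s''$ with $v\in V(s'')$. I would show $t\#(s_1\to_v s'')$ in two steps. Iterating Proposition \ref{compatibility} along the $R$-chains witnessing $t_1\# s_1$ and $t''\# s''$ gives $t=t_1\butcher t''\ \#\ s_1\butcher s''=s_1\to_r s''$, where $r$ is the root of $s''$; and climbing the path from $r$ up to $v$ one edge at a time is exactly a chain of $R$-moves, so $s_1\to_r s''\ \#\ s_1\to_v s''$. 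Transitivity of $\#$ places every monomial in $[t]_\#$. Taking $s_1=t_1$, $s''=t''$ and $v=r$ recovers $t$ itself, which is the leading term because $s\# s'$ forces $s'\prec s$; this matches the triangularity already present in $\beta_{S_{min}}$ and identifies the coefficients with those of \cite[Corollary 5]{AM2014}.

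The step I expect to be delicate is the compatibility of the section with the left decomposition, namely $S_{min}(t)=S_{min}(t_1)\lbutcher S_{min}(t'')$. It rests on the explicit description of $O(J)$ together with the bijectivity of $S_{min}$ onto $O(J)$, and on checking that the leftmost branch of $S_{min}(t)$ is $S_{min}$ of a minimal branch while the remaining forest, still nondecreasing, is $S_{min}(t'')$. Once this is secured, everything else is a routine double induction built on Proposition \ref{compatibility} and the preceding lemma comparing $\#$ with $\preceq$.
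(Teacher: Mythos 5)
Your proposal is correct and follows essentially the same route as the paper's proof: the same decomposition $t=t_1\butcher t_2$ along a minimal branch, the same recursion $\widetilde{\Psi}_{S_{min}}(t)=\widetilde{\Psi}_{S_{min}}(t_1)\to\widetilde{\Psi}_{S_{min}}(t_2)$ obtained from the commutative diagram, and the same induction on vertices combined with Proposition \ref{compatibility} to place the support inside $[t]_{\#}$. If anything, you are more careful than the paper, which uses the multiplicativity $S_{min}(t_1\butcher t_2)=S_{min}(t_1)\lbutcher S_{min}(t_2)$ without comment, whereas you correctly identify it as the delicate point and justify it via Proposition \ref{princ}.
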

\begin{proof}
We prove this Lemma by the induction on the number of vertices of  the tree. Suppose that \eqref{realized} is realized for any tree in $T^E$\label{E35} with a number of vertices less than or equal to $n$. Take $t \in T^E$ be a tree, such that $\#V(t) = n+1$ and $t= t_1 \butcher t_2$, where $t_1$ is the minimal branch of $t$ with respect to the order $\preceq$. Then we have:
\begin{align*}
\widetilde{\Psi}_{S_{min}}(t) &= \widetilde{\Psi}_{S_{min}}(t_1 \butcher t_2) &\\
&= \overline{\Psi} \circ S_{min}(t_1 \butcher t_2) &\\
&= \overline{\Psi} \big(S_{min}(t_1)  \lbutcher S_{min}(t_2) \big)&\\
&= \pi \big( \Psi \circ S_{min}(t_1)  \searrow \Psi \circ S_{min}(t_2) \big)&\\
&= \widetilde{\Psi}_{S_{min}}(t_1) \to \widetilde{\Psi}_{S_{min}}(t_2)&\\
&= \left(\sum_ {s' \in [t_1]_{\# }} \beta_{S_{min}}(s',t_1)s'\right) \to  \left( \sum_ {s'' \in [t_2]_{\# }} \beta_{S_{min}}(s'',t_2)s''\right)&\\
&= \sum_{{s' \in [t_1]_{\# }\atop {s'' \in [t_2]{\# }}}} \beta_{S_{min}}(s',t_1)\,\beta_{S_{min}}(s'',t_2)\; s' \to s''\,.&
\end{align*}
From Proposition \ref{compatibility}, we have that\label{lb7}:
\begin{equation}\label{defs}
t= t_1 \butcher t_2\,\# \,s := \,s'\butcher s''\,\#  \,s' \to_{v} s'', \hbox{ for } v \in V(s'').
\end{equation}

Let $s^v$ be the smallest branch of the tree $s$, defined above in \eqref{defs}, starting from $v$, and $s_v$ be the corresponding trunk (what remains when the branch $s^v$ is removed). Then we have:
\begin{equation}\label{target}
\beta_{S_{min}}(s,t) = \sum_{v \in V(s)}\beta_{S_{min}}(s^v,t_1)\,\beta_{S_{min}}(s_v,t_2).
\end{equation}
The formula \eqref{target} above is induced by the formula $2.15$ and the definition of the coefficients $\beta_{S_{min}}(s,t)$ described in \cite{AM2014}. Hence, we get:
$$\widetilde{\Psi}_{S_{min}}(t)= \sum_ {s \in [t]_{\# }} \beta_{S_{min}}(s,t)s.$$
\end{proof}

\begin{cor}\label{term of psi}
Let $t \in T^E$, then the maximal term $T\big(\widetilde{\Psi}_{S_{min}}(t)\big)$\label{Tf9}, with respect to the order defined in \eqref{non-planar version}, of $\widetilde{\Psi}_{S_{min}}(t)$ is the tree $t$ itself. 
\end{cor}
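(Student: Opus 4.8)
The plan is to read off the leading term directly from the explicit expansion obtained in the preceding Lemma. By \eqref{realized} we have $\widetilde{\Psi}_{S_{min}}(t)=\sum_{s\in[t]_{\#}}\beta_{S_{min}}(s,t)\,s$, so every tree occurring in $\widetilde{\Psi}_{S_{min}}(t)$ lies in the class $[t]_{\#}$. Thus the statement reduces to two facts: first, that $t$ is the unique $\preceq$-maximal element of $[t]_{\#}$; and second, that the coefficient $\beta_{S_{min}}(t,t)$ of $t$ does not vanish. Granting both, the tree $t$ occurs with nonzero coefficient and strictly dominates every other tree appearing in the sum, whence $T\big(\widetilde{\Psi}_{S_{min}}(t)\big)=t$.

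For the first fact I would invoke property (1) of the class $[t]_{\#}$, namely that every $s\in[t]_{\#}$ satisfies $s\preceq t$; this is itself deduced from Lemma \ref{deduced} together with the lemma asserting that $s\,\#\,s'$ implies $s'\prec s$. The latter provides the needed strictness: any $s\in[t]_{\#}$ distinct from $t$ satisfies $s\prec t$ for the order \eqref{non-planar version}. Hence $t$ is the unique maximal representative of its class, and no other term of the expansion can compete with it.

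The main obstacle is the second fact, the nonvanishing of $\beta_{S_{min}}(t,t)$, since a priori cancellations among the summands could annihilate the diagonal coefficient. I would treat this by induction on the number of vertices, using the recursion \eqref{target}. Writing $t=t_1\butcher t_2$ with $t_1$ the minimal branch of $t$, the summand of \eqref{target} indexed by the root $v$ of $t$ has $s^v=t_1$ and $s_v=t_2$, and therefore contributes $\beta_{S_{min}}(t_1,t_1)\,\beta_{S_{min}}(t_2,t_2)$, which is positive by the induction hypothesis (the base case being a single vertex, where $\widetilde{\Psi}_{S_{min}}(t)=t$ and $\beta_{S_{min}}(t,t)=1$). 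To exclude cancellation coming from the remaining vertices, I would first record, again by induction from \eqref{target} with nonnegative base values, that every coefficient $\beta_{S_{min}}(s,t)$ is a nonnegative integer; then $\beta_{S_{min}}(t,t)$ is a sum of nonnegative terms one of which is strictly positive, so $\beta_{S_{min}}(t,t)>0$. Combined with the maximality established above, this yields $T\big(\widetilde{\Psi}_{S_{min}}(t)\big)=t$ and completes the argument.
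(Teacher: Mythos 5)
Your proof is correct and is essentially the argument the paper intends: the corollary is stated there without proof, as an immediate consequence of the expansion \eqref{realized}, the maximality property (1) of the class $[t]_{\#}$, and the lemma asserting that $t\,\#\,s$ implies $s\prec t$. The only point you elaborate beyond this is the nonvanishing of the diagonal coefficient $\beta_{S_{min}}(t,t)$, which the paper implicitly inherits from \cite[Corollary 5]{AM2014}; your induction via \eqref{target} (the root vertex contributes $\beta_{S_{min}}(t_1,t_1)\,\beta_{S_{min}}(t_2,t_2)$, and all coefficients are nonnegative) settles this point correctly.
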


From \cite{AM2014}, we have that the set $\Cal{B} = \left\{ \widetilde{\Psi}_{S_{min}}(t): t \in T^E \right\}$ forms a monomial basis for the free pre-Lie algebra $\big(\Cal{T}^E, \to \big)$. Let $I$ be the (two-sided) ideal generated by the elements on the form described in \eqref{def of I}, then we have the following commutative diagram:\\

\begin{figure}[h]
%\vspace{-55pt}
\centering{}
\diagrama{
\xymatrix{
\big(\Cal{T}^E, \to \big) \ar@{->>}[r]^{q} \ar@{->>}[d]^{\Phi} & \big( \Cal{L}'(E), \rhd^{\!*} \big) \ar@{->}[dl]|{\,\widetilde{=}\,}\\ \big( \Cal{L}(E), \rhd \big)}}
\caption{}
\label{}
\end{figure}
\newpage
\noindent where $\Cal{L}'(E)=\Cal{T}^E / I$\label{CL6}, and the product $\rhd^{\!*}$ is the pre-Lie product defined in \eqref{pre-lie}. $\Cal{L}(E)$ is the free Lie algebra generated by $E$ which carries the pre-Lie algebra structure by the product $\rhd$ defined in \eqref{rhd}. The restriction of $\Phi$  to $S\!pan_K(O(I))$\label{not9-K} is an injective map\label{lb8}. Indeed, for any $h_1, h_2, \in S\!pan_K(O(I))$, 
\begin{align*}
&\Phi(h_1) = \Phi(h_2)&\\
\Rightarrow\;\;&\Phi(h_1 - h_2) = 0&\\
\Rightarrow\;\;&(h_1 - h_2) \in Ker\,\Phi = I&\\
\Rightarrow\;\;&(h_1 - h_2) \in S\!pan_K(O(I)) \cap I = \big\{ 0 \big\}&\\
\Rightarrow\;\;& h_1 - h_2 =  0 &\\
\Rightarrow\;\;& h_1  = h_2.&
\end{align*}

Also, since $\Phi: \Cal{T}^E \longrightarrow \Cal{L}(E)$ is a surjective map, then we have:
\begin{align*}
\Cal{L}(E) &= \Phi\big(\Cal{T}^E\big)&\\
&= \Phi\big( I \oplus S\!pan_K(O(I)) \big) \;\;\;\;(\hbox{ by Theorem \ref{span} })&\\
&= \Phi\big(S\!pan_K(O(I))\big)\;\;\;\;\;\;\;\;,\hbox{ since } Ker\,\Phi = I \hbox{ and } \Phi\big(I\big) = \big\{0\big\}.& 
\end{align*}  

Hence, $\Phi: S\!pan_K(O(I)) \longrightarrow \Cal{L}(E)$ is a surjective and an injective map. Then it is an isomorphism of vector spaces.
   
\begin{thm}\label{basis for L(E)}
For any $t \in O(I)$, we have:
\begin{equation}\label{POEO}
\widetilde{\Psi}_{S_{min}}(t) = Can\big(\widetilde{\Psi}_{S_{min}}(t), I\big) = t.
\end{equation}
Moreover, the set $\widetilde{\Cal{B}} := \left\{ \Phi(t) : t \in O(I) \right\}$ is a monomial basis for the pre-Lie algebra $\big( \Cal{L}(E), \rhd \big)$. 
\end{thm}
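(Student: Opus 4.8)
The plan is to combine three facts already available: that $\Cal{B}=\{\widetilde{\Psi}_{S_{min}}(t):t\in T^E\}$ is a monomial basis of the free pre-Lie algebra $(\Cal{T}^E,\to)$ whose maximal term is $T\big(\widetilde{\Psi}_{S_{min}}(t)\big)=t$ (Corollary \ref{term of psi}); the direct sum decomposition $\Cal{T}^E = I\oplus S\!pan_K(O(I))$ coming from Theorem \ref{span}; and the vector space isomorphism $\Phi\colon S\!pan_K(O(I)) \longrightarrow \Cal{L}(E)$ established just before the statement. The whole argument reduces to showing that this pre-Lie basis, once its index set is restricted to $O(I)$, consists of single trees, and then transporting it through $\Phi$.

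First I would prove the identity \eqref{POEO}. By the Lemma yielding \eqref{realized} we have $\widetilde{\Psi}_{S_{min}}(t)=\sum_{s\in[t]_{\#}}\beta_{S_{min}}(s,t)\,s$, a sum over the $\#$-class of $t$ in which, by Corollary \ref{term of psi}, $t$ is the maximal term and occurs with coefficient one. I would then argue that for $t\in O(I)$ the class $[t]_{\#}$ reduces to $\{t\}$, so the expansion collapses to $\widetilde{\Psi}_{S_{min}}(t)=t$. For this I would first transfer the characterization of Theorem \ref{main} to the non-planar side: $O(I)$ is a set of ladders (for a ladder, planar and non-planar shapes coincide, so the section $S_{min}$ and the identification $\Cal{T}^E/I\cong\Cal{T}^E_{pl}/J'$ underlying Proposition \ref{hom2} match $O(I)$ with the ladder part $O(J')$). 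Then I would observe that any ladder $t$ admits no nontrivial down-sliding of a grafted branch: in a decomposition $t=T\to_v t'$ the vertex $v$ must be the top of $t'$, since grafting at an interior vertex of a ladder would create a branching, and hence no vertex $w\in V(t')$ lies strictly above $v$; the relation $R$ of \eqref{relation R} therefore cannot be applied, giving $[t]_{\#}=\{t\}$. Once $\widetilde{\Psi}_{S_{min}}(t)=t$ is known, the remaining equalities in \eqref{POEO} are immediate: since $t\in O(I)\subset S\!pan_K(O(I))$, the reduction algorithm of Theorem \ref{span} leaves $t$ unchanged, whence $Can\big(\widetilde{\Psi}_{S_{min}}(t),I\big)=Can(t,I)=t$.

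With \eqref{POEO} in hand, the second assertion follows formally. The trees $\{t:t\in O(I)\}$ are by definition a basis of $S\!pan_K(O(I))$, and by \eqref{POEO} they coincide with the subfamily $\{\widetilde{\Psi}_{S_{min}}(t):t\in O(I)\}$ of the pre-Lie monomial basis $\Cal{B}$. Applying the isomorphism $\Phi\colon S\!pan_K(O(I))\longrightarrow \Cal{L}(E)$ transports this basis to $\widetilde{\Cal{B}}=\{\Phi(t):t\in O(I)\}$, which is therefore a linear basis of $\big(\Cal{L}(E),\rhd\big)$. It is a monomial basis because each generator is the $\Phi$-image of a single tree $t$, indexed by the pairwise distinct leading monomials $t$ (with respect to the order \eqref{non-planar version}); these are the canonical representatives modulo $I$, so the leading-term structure is inherited from $\Cal{B}$ being a monomial basis of the free pre-Lie algebra.

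I expect the main obstacle to be the first step, the collapse $\widetilde{\Psi}_{S_{min}}(t)=t$ for $t\in O(I)$: it requires both the explicit description of $O(I)$ as a set of ladders (transferring Theorem \ref{main} through $S_{min}$ and Proposition \ref{hom2}) and the verification that the equivalence $\#$ acts trivially on ladders. Everything after that is bookkeeping built on the isomorphism $\Phi$ and on Theorem \ref{span}.
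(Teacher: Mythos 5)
Your proposal is correct and follows essentially the same route as the paper: it establishes \eqref{POEO} from Theorem \ref{main} together with the expansion \eqref{realized} of $\widetilde{\Psi}_{S_{min}}$, and then transports the basis $O(I)$ of $S\!pan_K(O(I))$ through the vector-space isomorphism $\Phi$ to get the monomial basis of $\big(\Cal{L}(E),\rhd\big)$. The only difference is one of detail: the paper disposes of \eqref{POEO} in a single line citing Theorem \ref{main} and the definition of $\widetilde{\Psi}_{S_{min}}$, whereas you make the underlying mechanism explicit (every tree in $O(I)$ is a ladder, a ladder admits no decomposition $t = u \to_v u'$ with a vertex lying above $v$, so its $\#$-class is trivial and the sum \eqref{realized} collapses to the single term $t$), which is a correct filling-in of the same argument.
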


\begin{proof}
The property \eqref{POEO} is induced from Theorem \ref{main} and the definition of $\widetilde{\Psi}_{S_{min}}$. We obviously have that the set $\Cal{B}' = O(I)$ is a basis for $S\!pan_K(O(I))$. Therefore, as $\Phi: S\!pan_K(O(I)) \longrightarrow \Cal{L}(E)$ is an isomorphism of vector spaces, $\widetilde{\Cal{B}} := \Phi(\Cal{B}')$ forms a basis for the pre-Lie algebra $\big(\Cal{L}(E), \rhd \big)$. This basis is monomial thanks to \eqref{POEO}, such that:
$$\Phi(t) = \Phi\big( \widetilde{\Psi}_{S_{min}}(t)\big), \hbox{ for all } t \in O(I),$$ 
this proves Theorem \ref{basis for L(E)}.      
\end{proof}  

\noindent Consequently, we get the following immediate result.
\begin{cor}\label{MBOL}
The set $\widetilde{\Cal{B}} := \left\{ \Phi(t) : t \in O(I) \right\}$\label{CL7} is a monomial basis for the free Lie algebra $\big( \Cal{L}(E),[\cdot, \cdot] \big)$.
\end{cor}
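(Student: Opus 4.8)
The plan is to observe that Corollary \ref{MBOL} is a purely vector-space statement, and therefore follows from Theorem \ref{basis for L(E)} with essentially no additional work. The point I would emphasize at the outset is that the two structures $\big(\Cal{L}(E), \rhd\big)$ and $\big(\Cal{L}(E), [\cdot,\cdot]\big)$ share one and the same underlying $K$-vector space $\Cal{L}(E)$: the pre-Lie product was defined in \eqref{rhd} by $x \rhd y = \frac{1}{|x|}[x,y]$ directly on the free Lie algebra, so passing from the pre-Lie algebra to the Lie algebra changes only the bilinear operation, not the ambient graded vector space. Consequently any assertion about a \emph{vector-space} basis of $\big(\Cal{L}(E), \rhd\big)$ is automatically an assertion about $\big(\Cal{L}(E), [\cdot,\cdot]\big)$.

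First I would invoke Theorem \ref{basis for L(E)}, which gives that $\widetilde{\Cal{B}} = \big\{ \Phi(t) : t \in O(I) \big\}$ is a monomial basis for $\big(\Cal{L}(E), \rhd\big)$; in particular it is a $K$-linear basis of the vector space $\Cal{L}(E)$. Since a basis of a vector space depends only on the vector space and not on any product placed upon it, the very same set is a $K$-linear basis of the space underlying the free Lie algebra $\big(\Cal{L}(E), [\cdot,\cdot]\big)$. It then remains only to check that the qualifier ``monomial'' transfers verbatim: monomiality here means that the basis is indexed by the trees $t \in O(I)$ through $t \mapsto \Phi(t)$, and that, by \eqref{POEO} together with Corollary \ref{term of psi}, each $\Phi(t)$ is the image under $\Phi$ of the element $\widetilde{\Psi}_{S_{min}}(t)$ whose maximal term, for the monomial order \eqref{non-planar version}, is the tree $t$ itself. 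Neither the order \eqref{non-planar version} nor the map $\Phi$ refers to the product chosen on $\Cal{L}(E)$, so this leading-term labelling is unchanged when $\widetilde{\Cal{B}}$ is read inside $\big(\Cal{L}(E),[\cdot,\cdot]\big)$.

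There is no genuine obstacle here, which is precisely why the statement is recorded as an immediate corollary. The one subtlety worth flagging explicitly is that the Lie bracket $[\cdot,\cdot]_{\rhd}$ induced by $\rhd$ is not literally the original bracket $[\cdot,\cdot]$, but differs from it by degree-dependent scalars, as recorded in \eqref{intermid}. I would note that this discrepancy is immaterial: the claim concerns a vector-space basis and its leading-term description, and both are insensitive to a degree-wise rescaling of the bracket. Thus the conclusion of Theorem \ref{basis for L(E)} carries over unchanged, proving Corollary \ref{MBOL}.
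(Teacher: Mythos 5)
Your proposal is correct and coincides with the paper's own reasoning: the paper offers no separate argument, recording the corollary as an ``immediate result'' of Theorem \ref{basis for L(E)} precisely because $\big(\Cal{L}(E),\rhd\big)$ and $\big(\Cal{L}(E),[\cdot,\cdot]\big)$ share the same underlying vector space, so the basis and its leading-term (monomial) description transfer verbatim. Your explicit flagging of the harmless discrepancy between $[\cdot,\cdot]_{\rhd}$ and $[\cdot,\cdot]$ is a reasonable touch but adds nothing beyond what the paper takes for granted.
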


\begin{exams}
Here,we calculate few first bases $\widetilde{\Cal{B}}_n$ for homogeneous components $\Cal{L}_n$ of the free Lie algebra $\Cal{L}(E)$ up to $n=4$, using Corollary \ref{MBOL}, as follows:

\begin{enumerate}
\item\label{g. case} As a particular case, take $E = \{a_i : i \in \mathbb{N}\}$, such that $ |a_i| = i$, for all $i \in \mathbb{N}$, with total order $a_1 < a_2 < \cdots < a_s < \cdots$ on the generators. From \cite{AM2014}, we have:

\begin{align*}
\Cal{B}\big(\Cal{T}^{E}_1\big) &= \left\{ \racineLaba \;: a_1 \in E\label{E36} \right\} \,.&\\
\Cal{B}\big(\Cal{T}^{E}_2\big) & = \left\{ \racineLabaii \;: a_2 \in E\right\}  \sqcup \left\{ \arbreaoLaba \;: a_1 \in E \right\}.&\\
 \Cal{B}\big(\Cal{T}^{E}_3\big) & = \left\{ \racineLabooo \;: a_3 \in E \right\} \sqcup \left\{ \arbreaLabooo\,,\,\arbreaLaboo \;: a_1, a_2 \in E \right\} \sqcup \left\{ \arbrebaLabo\;,\;\; \arbrebbLabo + \arbrebaLaboi \;: a_1 \in E \right\}.&\\
\Cal{B}\big(\Cal{T}^{E}_4\big) & = \left\{ \racineLaboooo \;: a_4 \in E \right\} \sqcup \left\{ \arbreaLabto\,,\,\arbreaLabot\,,\,\arbreaLabtt \;: a_1, a_2, a_3  \in E \right\} \sqcup &\\
&\;\;\;\;\;\;\left\{ \arbrebaLaboot\;,\;\arbrebaLabtoo\;,\;\arbrebaLaboto\;,\;\; \arbrebbLabtoo + \arbrebaLaboot \;\;,\;\; \arbrebbLaboto + \arbrebaLabtoo\;: a_1, a_2 \in E\right\} \sqcup &\\
&\;\;\;\;\;\;\left\{  \arbrecaLabo\;\;,\; \arbrecbLabo + \arbrecaLabo\;\;\,,\;\, \arbreccLabo \;+ \arbrecbLabo + \arbrecaLabo\;\;\,,\;\,\arbrecorLabo\;+\; 3 \; \arbreccLabo \;+ \arbrecbLabo + \arbrecaLabo\;\;: \,a_1 \in E \right\}.&
\end{align*}
Then, we get the following monomial bases $\widetilde{\Cal{B}}_n$ for $\Cal{L}_n$\,, up to $n=4$:

\begin{align*}
\widetilde{\Cal{B}}_1 &= \{ a_1 \}\,.&\\\\
\widetilde{\Cal{B}}_2 &= \{ a_2 \}\,.&\\\\
\widetilde{\Cal{B}}_3 &= \big\{ a_3, [a_1, a_2] \big\}\,.&\\\\
\widetilde{\Cal{B}}_4 &= \{ a_4, [a_1, a_3], \big[[a_1, a_2], a_1\big] \big\}\,.&
\end{align*}

\item\label{s. case} Let us take $E=\{x, y \}$ ordered by $x<y$, such that $ |x| = |y| = 1$. Denote by $\racinecirc$ the vertex decorated by $x$, and $\racine$ the vertex decorated by $y$, such that $ \racinecirc < \racine$\,. Using the order defined in \eqref{non-planar version}, we arrange the first terms of $T^E$ as follows: 
\begin{align*}
&1<\racinecirc<\racine<\arbreacirca<\arbreacircb<\arbreacircc<\arbreacircd<\arbrebacirc<\arbrebbcirc<\arbrebccirc<\arbrebdcirc<\arbrebecirc<\arbrebfcirc<\arbrebgcirc<\arbrebhcirc<\!\!\!\!\!\!\!\!\!\!\arbrecacirc\!\!\!\!\!\!\!\!\!\!<\!\!\!\!\!\!\!\!\!\!\arbrecbcirc\!\!\!\!\!\!\!\!\!\!<\!\!\!\!\!\!\!\!\!\!\arbrecccirc\!\!\!\!\!\!\!\!\!\!<\!\!\!\!\!\!\!\!\!\!\arbrecdcirc&\\
&\;\;\;\;<\!\!\!\!\!\!\!\!\!\!\arbrecgcirc\!\!\!\!\!\!\!\!\!\!<\!\!\!\!\!\!\!\!\!\!\arbrechcirc\!\!\!\!\!\!\!\!\!\!<\cdots.&
\end{align*}
Also, we calculate here the monomial bases for the homogeneous components $\Cal{T}^{E}_{\!n}$\label{E37} up to $n=4$:

\begin{align*}
\Cal{B}\big(\Cal{T}^{E}_1\big) &= \Big\{ \racinecirc, \racine \,\Big\} \,.&\\
\Cal{B}\big(\Cal{T}^{E}_2\big) & = \Big\{ \arbreacirca\, ,\,\arbreacircb\,,\,\arbreacircc\,,\, \arbreacircd \Big\}.&\\
 \Cal{B}\big(\Cal{T}^{E}_3\big) & = \Big\{ \arbrebacirc\,,\,\arbrebbcirc\,,\,\arbrebccirc\,,\, \arbrebdcirc\,,\,\arbrebecirc\,,\, \arbrebfcirc\,,\, \arbrebgcirc\,,\, \arbrebhcirc, \!\!\!\!\!\!\!\!\!\!\arbrecacirc \!\!\!\!\!\!\!\!\!\!+ \arbrebacirc,\!\!\!\!\!\!\!\!\!\!\arbrecbcirc\!\!\!\!\!\!\!\!\!\!+ \arbrebbcirc, \!\!\!\!\!\!\!\!\!\!\arbrecccirc\!\!\!\!\!\!\!\!\!\!+ \arbrebccirc, \!\!\!\!\!\!\!\!\!\!\arbrecdcirc\!\!\!\!\!\!\!\!\!\!+ \arbrebdcirc, \!\!\!\!\!\!\!\!\!\!\arbrecgcirc\!\!\!\!\!\!\!\!\!\!+ \arbrebgcirc, \!\!\!\!\!\!\!\!\!\!\arbrechcirc \!\!\!\!\!\!\!\!\!\!+ \arbrebhcirc \Big\}.
 \end{align*}
 \begin{align*}
\Cal{B}\big(\Cal{T}^{E}_4\big) & = \Big\{ \underbrace{\arbredacirc, \arbredaacirc, \ldots,   \arbredhhcirc, \arbredhhhcirc}_{\textrm{16 terms}}, \underbrace{\!\!\!\!\!\!\!\!\!\!\arbreeacirc\!\!\!\!\!\!\!\!\!\! + \arbredacirc, \!\!\!\!\!\!\!\!\!\!\arbreeaacirc\!\!\!\!\!\!\!\!\!\!+\arbredaacirc, \ldots,   \!\!\!\!\!\!\!\!\!\!\arbreehhcirc\!\!\!\!\!\!\!\!\!\!+\arbredhhcirc, \!\!\!\!\!\!\!\!\!\!\arbreehhhcirc \!\!\!\!\!\!\!\!\!\! + \arbredhhhcirc}_{\textrm{12 terms}} , & \\
&\;\;\;\;\;\underbrace{\;\arbreehcirc + \!\!\!\!\!\!\!\!\!\!\arbreeacirc\!\!\!\!\!\!\!\!\!\! + \arbredacirc, \;\arbreeehcirc +\!\!\!\!\!\!\!\!\!\!\arbreeaacirc\!\!\!\!\!\!\!\!\!\!+\arbredaacirc, \ldots, \arbreeeehcirc +\!\!\!\!\!\!\!\!\!\!\arbreehhcirc\!\!\!\!\!\!\!\!\!\!+\arbredhhcirc, \;\arbreeeeehcirc +\!\!\!\!\!\!\!\!\!\!\arbreehhhcirc \!\!\!\!\!\!\!\!\!\! + \arbredhhhcirc}_{\textrm{16 terms}},&\\
& \;\;\;\;\;\underbrace{\;\arbrefhcirc + 3 \;\arbreehcirc + \!\!\!\!\!\!\!\!\!\!\arbreeacirc\!\!\!\!\!\!\!\!\!\! + \arbredacirc, \ldots, \;\arbreffhcirc + 3 \;\arbreeeeehcirc +\!\!\!\!\!\!\!\!\!\!\arbreehhhcirc \!\!\!\!\!\!\!\!\!\! + \arbredhhhcirc  }_{\textrm{8 terms}} \Big\}.&
\end{align*}
Hence, we have:
\begin{align*}
\widetilde{\Cal{B}}_1 &= E\,.&\\\\
\widetilde{\Cal{B}}_2 &= \Big\{ [x, y] \;: x, y \in E \Big\}\,.&\\\\
\widetilde{\Cal{B}}_3 &= \Big\{ \big[[x, y], x\big] \;, \big[[x, y], y\big] \; : x, y \in E\Big\}\,.&\\\\
\widetilde{\Cal{B}}_4 &= \Big\{ \big[[[x, y], x], x\big]\,,\, \big[[[x, y], x], y\big]\,,\, \big[[[x, y], y], y\big]\;: x, y \in E \Big\}\,.&
\end{align*}
\end{enumerate}
\end{exams}
\begin{rmk}
In the monomial basis $\widetilde{\Cal{B}}_4$ for $\Cal{L}_4$\label{E38}, calculated in \eqref{s. case} above, we observe the following:

\noindent the tree $\arbredfcirc$ is not in $O(I)$, since there is an element $f= \arbreddcirc - \arbredfcirc\!$ that belongs to $I$ such that $T(f) = \arbredfcirc\!\!\!\in T(I)$. Indeed, from the pre-Lie identity, and the so-called weighted anti-symmetry identity described in \eqref{def of I}, we have, drawing non-planar trees explicitly:
\begin{align*}
&f_1=  \overline{\Psi} \Big (\big(\arbreacircb\!\!\lbutcher \racinecirc \big) \lbutcher \racine - \arbreacircb\!\!\lbutcher \arbreacircb - \big(\!\!\racinecirc \lbutcher \arbreacircb\!\!\big) \lbutcher \racine + \racinecirc \lbutcher \big(\arbreacircb\!\!\lbutcher \racine \big) \Big) =  \overline{\Psi} \Big( \arbreddcirc - \arbredhcirc - \arbredgcirc + \arbredicirc\,\Big),&\\ 
&f_2= \overline{\Psi} \Big( \big(\racinecirc \lbutcher \arbreacircb +2 \arbreacircb\!\!\lbutcher \racinecirc \big) \lbutcher \racine \Big) = \overline{\Psi} \Big( \arbredgcirc + 2 \arbreddcirc \Big), \hbox{ and } f_3=  \overline{\Psi} \Big (\racinecirc \lbutcher \arbrebdcirc + 3 \arbrebdcirc \lbutcher \racinecirc \Big) = \overline{\Psi} \Big(\,\arbredicirc + 3 \arbredfcirc \Big)&
\end{align*}
are elements in $I$, hence $f_4=f_1 + f_2 - f_3 = \overline{\Psi} \Big( 3 \arbreddcirc - 3 \arbredfcirc - \arbredhcirc\,\Big) \in I$\label{lb9}. But, $f_5= \overline{\Psi} \Big (\,\arbredhcirc\,\Big) \in I$, hence $f=f_4+f_5= 3 \arbreddcirc - 3 \arbredfcirc \in I.$ Then, we have:
\begin{align*}
\Phi(f) &= 3\,\big((x \rhd y)\rhd x\big) \rhd y - 3\,\big((x \rhd y)\rhd y\big) \rhd x -  (x \rhd y) \rhd (y \rhd x) &\\
 & = \big[[[x, y], x], y\big] - \big[[[x, y], y], x\big] + \big[[[x, y], [x, y]\big]&\\
& = 0\;,&
\end{align*}
and then,
$$\big[[[x, y], x], y\big] =  \big[[[x, y], y], x\big] .$$
\end{rmk}

\paragraph{\textbf{Acknowledgments}}{The authors thank K. Ebrahimi-Fard and L. Foissy for valuable discussions and comments.}

%%%%%%%%%%%%%%%%%%%%%%%%%%%%%%%%%%%%%%%%%%%%%%%%%%%%%%%%%%%%%%%%%%%
%%%%%%%%%%%%%%%%%%%%%%%%%%%%%%%%%%%%%%%%%%%%%%%%%%%%%%%%%%%%%%%%%%%


\begin{thebibliography}{abcdsfgh}
{\small{
\bibitem{AR81}A. Agrachev, R. Gamkrelidze, {\sl Chronological algebras and nonstationary vector fields\/}, J. Sov. Math. 17 Nol, 1650-1675 (1981).
\bibitem{AM2014}Mahdi J. Hasan Al-Kaabi, {\sl Monomial Bases for Free Pre-Lie Algebras\/}, S\'eminaire Lotharingien de Combinatoire 71 (2014), Article B71b.
\bibitem{TB93}T. Becker, V. Weispfenning, H. Kredel, {\sl Gr\"obner Bases, A Computational Approach to Commutative Algebra\/}, Springer-Verlag New York, Inc (1993).
\bibitem{BYL}L. A. Bokut, Yuqun Chen, Yu Li, {\sl Gr\"obner-Shirshov bases for Vinberg-Koszul-Gerstenhaber right-symmetric algebras\/}, Fundamental and Applied Math. 14(8), 55-67 (2008).
\bibitem{D.B06}D. Burde, {\sl Left symmetric algebras, or pre-Lie algebras in geometry and physics\/}, Cent. Eur. J. Math. 4(3), 323-357 (2006).
\bibitem{CL01}F. Chapoton, M. Livernet, {\sl Pre-Lie algebras and the rooted trees operad\/}, Internat. Math. Res. Notice 8, 395-408 (2001), arXiv: math/0002069.
\bibitem{J.D74}J. Dixmier, {\sl Alg\`ebres Enveloppantes\/}, Paris: Gauthier-Villars, France (1974).
\bibitem{D.H.}V. Drensky and R. Holtkamp, {\sl Planar trees, free non-associative algebras, invariants, and elliptic integrals \/}, Algebra Discrete Math., No. 2, 1-41 (2008).
\bibitem{AC02}A. Dzhumadil'daev, C. L\"ofwall, {\sl Trees, free right-symmetric algebras, free Novikov algebras and identities\/}, Homology, Homotopy and Appl. 4(2), 165-190 (2002).
\bibitem{EM}K. Ebrahimi-Fard, D. Manchon, {\sl On an extension of Knuth's rotation correspondence to reduced planar trees \/}, Journal of Non-commutative Geometry Volume 8, Issue 2, pp 303-320 (2014).
\bibitem{PT09}Ph. Flajolet, T. Sedgewick, {\sl Analytic Combinatorics\/}, Cambridge Univ. Press (2009).
\bibitem{LF}L. Foissy, {\sl Alg\`ebres pr\'e-Lie et alg\`ebres de Lie tordues\/}, Expos\'e au 14 mars 2011, Groupe de Travail Interuniversitaire en 
Alg\`ebre, Paris.
\bibitem{M.G63}M. Gerstenhaber, {\sl The cohomology structure of an associative ring\/}, Ann. Math. 78, No. 2, 267-288 (1963).
\bibitem{TH82}T. Hawkins, {\sl Wilhelm Killing and the structure of Lie algebras\/}, Springer, Archive for History of Exact Sciences 5. VII., Volume 26, Issue 2, pp 127-192 (1982).
\bibitem{VK90} V. G. Kac, {\sl Infinite dimensional Lie algebras, third edition\/}, Cambridge University Press (1990).  
\bibitem{D.K68}D. E. Knuth, {\sl The art of computer programming I. Fundamental algorithms\/}, Addison-Wesley (1968).
\bibitem{J.K61}J. L. Koszul, {\sl Domaines born\'es homog\`enes et orbites de groupes de transformations affines\/}, Bull. Soc. Math. France 89, No. 4, 515-533 (1961).
\bibitem{M.L06}M. Livernet, {\sl A rigidity theorem for pre-Lie algebras\/}, J. Pure Appl. Alg. 207 (1), 1-18 (2006), arXiv: hep-th/0010059.
\bibitem{D.M}D. Manchon, {\sl Algebraic Background for Numerical Methods, Control Theory and Renormalization\/}, Proc. Combinatorics and Control, Benasque, Spain, 2010, arXiv:1501.07205.
\bibitem{T.M}T. Mora, {\sl An introduction to commutative and non-commutative Gr\"obner bases\/}, Theoretical Computer Science 134, 131-173 (1994).
\bibitem{Ch.R93}Ch. Reutenauer, {\sl Free Lie algebras\/}, Oxford University Press, New York(US) (1993).
\bibitem{T.S}T. Schedler, {\sl Connes-Kreimer quantizations and PBW theorems for pre-Lie algebras\/},SMF S\'eminaires et Congr\`es 26, p 223-251 (2013).
\bibitem{AS62}A. I. Shirshov, {\sl Some Algorithmic Problems for Lie Algebras\/}, Siberian Math. J. 3, 292-296 (1962).
\bibitem{S}N. J. A. Sloane, {\sl The On-Line Encyclopedia of Integer Sequences\/}, oeis.org.
\bibitem{E.V63}E. B. Vinberg, {\sl The Theory of homogeneous convex  cones\/}, Transl. Moscow Math. Soc. 12, 340-403 (1963).
}}
\end{thebibliography}
\end{document}